\newtheorem{lemma}{Lemma}[section]
\newtheorem{theorem}[lemma]{Theorem}
\setlist[enumerate]{label=\textnormal{(\arabic*)}, ref=\arabic*}
\newtheorem{corollary}[lemma]{Corollary}
\newtheorem{proposition}[lemma]{Proposition}
\theoremstyle{definition}
\newtheorem{definition}[lemma]{Definition}
\newtheorem{remark}[lemma]{\sc Remark}
\newtheorem{example}[lemma]{\sc Example}
\newtheorem{construction}[lemma]{\sc Construction}
\newtheorem*{notation}{Notation}
\numberwithin{equation}{section}
\newcommand{\nocontentsline}[3]{}
\newcommand{\tocless}[2]{\bgroup\let\addcontentsline=\nocontentsline#1{#2}\egroup}
\author{Coline Emprin}
\title{Obstruction sequences to homotopy equivalences}
\address{Coline Emprin, Department of Mathematics, Stockholm University, Albanovägen 28, 106 91, \textcolor{white}{tt} Stockholm, Sweden}
\email{\noindent \href{mailto:coline.emprin@ens.psl.eu}{coline.emprin@math.su.se}}
\date{\today}
\subjclass[2020]{18N40, 18M85, 18M70, 17B55, 55P62}
\keywords{Homotopy equivalences, obstructions, formality, algebras over properads}
\begin{document}
\definecolor{red}{RGB}{230,97,0}
\definecolor{blue}{RGB}{93,58,155}
\definecolor{Chocolat}{rgb}{0.36, 0.2, 0.09}
\definecolor{BleuTresFonce}{rgb}{0.215, 0.215, 0.36}
\definecolor{BleuMinuit}{RGB}{0, 51, 102}
\definecolor{bordeau}{rgb}{0.5,0,0}
\definecolor{BORDEAU}{rgb}{0.5,0,0}
\definecolor{turquoise}{RGB}{6, 62, 62}
\definecolor{rose}{RGB}{235, 62, 124}

\newcommand{\coline}[1]{\textcolor{rose}{#1}}

\newcommand{\hooklongrightarrow}{\lhook\joinrel\longrightarrow}


\newcommand{\chdiscr}{\mathsf{discr.\ Ch}}
\newcommand{\sSe}{\mathsf{sSet}}
\newcommand{\sLialg}{\ensuremath{\mathcal{sL}_\infty\text{-}\,\mathsf{alg}}}
\newcommand{\isLialg}{\ensuremath{\infty\text{-}\,\mathcal{sL}_\infty\text{-}\,\mathsf{alg}}}
\newcommand{\sLiealg}{\ensuremath{\mathcal{s}\lie\,\text{-}\,\mathsf{alg}}}
\def\cD{\Delta}

\newcommand{\ucomnalg}{\ensuremath{\mathrm{uCom}_{\leslant 0}\text{-}\,\mathsf{alg}}}
\newcommand{\fnQsp}{\ensuremath{\mathsf{ft}\mathbb{Q}\text{-}\mathsf{ho}(\mathsf{sSet})}}
\newcommand{\fnsp}{\ensuremath{\mathsf{fn}\text{-}\mathsf{Sp}}}
\newcommand{\nQsp}{\ensuremath{\mathsf{n}\mathbb{Q}\text{-}\mathsf{Sp}}}
\newcommand{\nsp}{\ensuremath{\mathsf{n}\text{-}\mathsf{Sp}}}
\newcommand{\fQalg}{\ensuremath{\mathsf{ft}\text{-}\mathsf{ho}(\mathrm{uCom}_{\leqslant 0}\text{-}\,\mathsf{alg})}}

\newcommand{\coker}{\operatorname{coker}}
\newcommand{\Ran}{\operatorname{Ran}}
\newcommand{\ho}{\operatorname{ho}}
\newcommand{\Mal}{\operatorname{Mal}}
\newcommand{\Loc}{\operatorname{Loc}}


\newcommand{\sLi}{\ensuremath{\mathcal{sL}_\infty}}
\newcommand{\ccoLi}{\ensuremath{\mathrm{cBCom}}}
\newcommand{\sLie}{\ensuremath{\mathcal{s}\mathrm{Lie}}}
\newcommand{\com}{\ensuremath{\mathrm{Com}}}
\newcommand{\ucom}{\ensuremath{\mathrm{uCom}}}
\newcommand{\Cobar}{\ensuremath{\Omega}}
\newcommand{\hatCobar}{\ensuremath{\widehat{\Omega}}}
\renewcommand{\Bar}{\ensuremath{\mathrm{B}}}
\newcommand{\hatBar}{\ensuremath{\widehat{\mathrm{B}}}}
\newcommand{\lie}{\ensuremath{\mathrm{Lie}}}
\newcommand{\uhocom}{\ensuremath{\mathrm{u}\Omega\mathrm{BCom}}}

\newcommand{\Sh}{\ensuremath{\mathrm{Sh}}}


\newcommand{\g}{\ensuremath{\mathfrak{g}}}
\newcommand{\wsLi}{\ensuremath{\widehat{\mathcal{sL}_\infty}}}


\newcommand{\Rh}{\ensuremath{\mathrm{R^h}}}
\renewcommand{\L}{\ensuremath{\mathscr{L}}}
\newcommand{\Li}{\mathfrak{L}}
\newcommand{\APL}{\mathrm{A_{PL}}}
\newcommand{\CPL}{\mathrm{C_{PL}}}


\newcommand{\antishriek}{\text{\raisebox{\depth}{\textexclamdown}}}
\newcommand{\RT}{\mathrm{RT}}
\newcommand{\LRT}{\mathrm{LRT}}
\newcommand{\Sy}{\mathbb{S}}
\renewcommand{\d}{\ensuremath{\mathrm{d}}}
\newcommand{\PP}{\ensuremath{\mathrm{P}}}
\def\Ho#1#2{\Lambda^{#2}_{#1}}
\def\De#1{\Delta^{#1}}
\newcommand{\NN}{\mathbb{N}}
\newcommand{\RR}{\mathbb{R}}
\def\BCH{\mathrm{BCH}}
\newcommand{\wPT}{\ensuremath{\mathrm{wPT}}}
\newcommand{\oPT}{\ensuremath{\overline{\mathrm{PT}}}}
\newcommand{\PaRT}{\ensuremath{\mathrm{PaRT}}}
\newcommand{\PaPT}{\ensuremath{\mathrm{PaPT}}}
\newcommand{\PaPRT}{\ensuremath{\mathrm{PaPRT}}}
\def\rmC{\mathrm{C}}
\newcommand{\berglund}{\ensuremath{\mathcal{B}}}
\def\hot{\widehat{\otimes}} 
\def\whk{\widehat{k}}

\def\colim{\mathop{\mathrm{colim}}}

\newcommand{\CC}{\ensuremath{\mathrm{CC}_\infty}}


\newcommand{\Lalg}{\ensuremath{\mathscr{L}_\infty\text{-}\mathsf{alg}}}

\newcommand{\F}{\ensuremath{\mathrm{F}}}
\newcommand{\h}{\ensuremath{\mathfrak{h}}}

\newcommand{\QQ}{\ensuremath{\mathbb{Q}}}
\newcommand{\D}{\ensuremath{\mathscr{D}}}
\renewcommand{\P}{\ensuremath{\mathcal{P}}}
\newcommand{\C}{\ensuremath{\mathcal{C}}}
\newcommand{\VdL}{\ensuremath{\mathrm{VdL}}}
\newcommand{\ch}{\ensuremath{\mathrm{Ch}}}
\newcommand{\End}{\ensuremath{\mathrm{End}}}
\newcommand{\Aut}{\ensuremath{\mathrm{Aut}}}
\newcommand{\eend}{\ensuremath{\mathrm{end}}}
\newcommand{\susp}{\ensuremath{\mathscr{S}}}
\newcommand{\T}{\ensuremath{\mathcal{T}}}
\newcommand{\vdl}{\ensuremath{\mathrm{VdL}}}
\newcommand{\Tw}{\ensuremath{\mathrm{Tw}}}
\newcommand{\Hom}{\ensuremath{\mathrm{Hom}}}
\renewcommand{\S}{\ensuremath{\mathbb{S}}}
\renewcommand{\k}{\ensuremath{\mathbb{K}}}
\newcommand{\id}{\ensuremath{\mathrm{id}}}
\newcommand{\MC}{\ensuremath{\mathrm{MC}}}
\newcommand{\mc}{\ensuremath{\mathfrak{mc}}}
\newcommand{\mclie}{\ensuremath{\overline{\mathfrak{mc}}}}
\newcommand{\dgl}{\ensuremath{\mathsf{dgLie}}}

\newcommand{\B}{\mathcal{B}}
\newcommand{\Z}{\mathbb{Z}}

\newcommand{\ad}{\operatorname{ad}}
\newcommand{\PTt}{\ensuremath{\widetilde{\mathrm{PT}}}}
\newcommand{\PT}{\ensuremath{\mathrm{PT}}}

\newcommand{\A}{\ensuremath{\mathrm{A}}}

\makeatletter

\newcommand{\R}{\mathbb{R}}
\newcommand{\K}{\mathcal{K}}
\newcommand{\N}{\mathbb{N}}
\newcommand{\calA}{\mathcal{A}}
\newcommand{\calB}{\mathcal{B}}
\newcommand{\calC}{\mathcal{C}}
\newcommand{\calD}{\mathcal{D}}
\newcommand{\calE}{\mathcal{E}}
\newcommand{\calF}{\mathcal{F}}
\newcommand{\calG}{\mathcal{G}}
\newcommand{\calH}{\mathcal{H}}
\newcommand{\calI}{\mathcal{I}}
\newcommand{\calJ}{\mathcal{J}}
\newcommand{\calK}{\mathcal{K}}
\newcommand{\calL}{\mathcal{L}}
\newcommand{\calM}{\mathcal{M}}
\newcommand{\calN}{\mathcal{N}}
\newcommand{\calO}{\mathcal{O}}
\newcommand{\calP}{\mathcal{P}}
\newcommand{\calQ}{\mathcal{Q}}
\newcommand{\calR}{\mathcal{R}}
\newcommand{\calS}{\mathcal{S}}
\newcommand{\calT}{\mathcal{T}}
\newcommand{\calU}{\mathcal{U}}
\newcommand{\calV}{\mathcal{V}}
\newcommand{\calW}{\mathcal{W}}
\newcommand{\calX}{\mathcal{X}}
\newcommand{\calY}{\mathcal{Y}}
\newcommand{\calZ}{\mathcal{Z}}
\newcommand{\kk}{{\Bbbk}}

\newcommand{\ev}{\mathop{\mathrm{ev}}\nolimits}
\newcommand{\co}{\mathop{\mathrm{co}}\nolimits}
\newcommand{\pt}{\mathop{\mathrm{pt}}\nolimits}
\newcommand{\wt}{\mathop{\mathrm{wt}}\nolimits}
\newcommand{\pCY}{\mathop{\mathrm{pCY}}\nolimits}
\newcommand{\gr}{\mathop{\mathrm{gr}}\nolimits}

\newcommand{\Imm}{\mathop{\mathrm{Im}}\nolimits}

\tikzset{>={Stealth[scale=1.2]}}
\tikzset{->-/.style={decoration={
			markings,
			mark=at position #1 with {\arrow{>}}},postaction={decorate}}}
\tikzset{-w-/.style={decoration={
			markings,
			mark=at position #1 with {\arrow{Stealth[fill=white,scale=1.4]}}},postaction={decorate}}}
\tikzset{->-/.default=0.65}
\tikzset{-w-/.default=0.65}
\tikzstyle{bullet}=[circle,fill=black,inner sep=0.5mm]
\tikzstyle{circ}=[circle,draw=black,fill=white,inner sep=0.5mm]
\tikzstyle{vertex}=[circle,draw=black,thick,inner sep=0.5mm]
\tikzstyle{dot}=[draw,circle,fill=black,minimum size=0.5mm,inner sep = 0mm, outer sep = 0mm]
\tikzset{darrow/.style={double distance = 4pt,>={Implies},->},
	darrowthin/.style={double equal sign distance,>={Implies},->},
	tarrow/.style={-,preaction={draw,darrow}},
	qarrow/.style={preaction={draw,darrow,shorten >=0pt},shorten >=1pt,-,double,double
		distance=0.2pt}}
\newcommand\tikcirc[1][2.5]{\tikz[baseline=-#1]{\draw[thick](0,0)circle[radius=#1mm];}}
\newcommand{\tikzfig}[1]{\begin{tikzpicture}[auto,baseline={([yshift=-.5ex]current bounding box.center)}]#1\end{tikzpicture}}
\usetikzlibrary{decorations.pathreplacing}

\newcommand{\DD}{\EuScript D}
\newcommand{\BB}{\EuScript B}
\newcommand{\OO}{\EuScript O}
\newcommand{\KK}{\EuScript K}
\newcommand{\YY}{\EuScript Y}
\newcommand\nn         {\nonumber \\}
\newcommand{\pf}{{\it Proof.}\hspace{2ex}}
\newcommand{\apf}{{\it Another Proof.}\hspace{2ex}}
\newcommand{\epf}{\hspace*{\fill}\emph{$\square$}}
\newcommand{\halmos}{\rule{1ex}{1.4ex}}
\newcommand{\pfbox}{\hspace*{\fill}\emph{$\halmos$}}

\newcommand{\ac}{\mathop{ac}\nolimits}
\newcommand{\sC}{\mathop{s^{-1}\overline{C}}\nolimits}
\newcommand{\sA}{\mathop{s\overline{A}}\nolimits}
\newcommand{\Rep}{\mathop{\mathrm{Rep}}\nolimits}
\newcommand{\Span}{\mathop{\mathrm{Span}}\nolimits}
\newcommand{\proj}{\mathop{\mathrm{proj}}\nolimits}
\newcommand{\PHom}{\mathop{\mathrm{PHom}}\nolimits}
\newcommand{\Dim}{\mathop{\mathrm{Dim}}\nolimits}

\newcommand{\Aus}{\mathop{\mathrm{Aus}}\nolimits}
\newcommand{\Ann}{\mathop{\mathrm{Ann}}\nolimits}
\newcommand{\APC}{\mathop{\mathrm{APC}}\nolimits}
\newcommand{\Barr}{\mathop{\mathrm{Bar}}\nolimits}
\newcommand{\Cone}{\mathop{\mathrm{Cone}}\nolimits}
\newcommand{\modu}{\mathop{\mathrm{mod}}\nolimits}
\newcommand{\DGMod}{\mathop{\mathrm{DGMod}}\nolimits}
\newcommand{\soc}{\mathop{\mathrm{soc}}\nolimits}
\newcommand{\dg}{\mathop{\mathrm{dg}}\nolimits}
\newcommand{\red}{\mathop{\mathrm{red}}\nolimits}
\newcommand{\Map}{\mathop{\mathrm{Map}}\nolimits}
\newcommand{\inj}{\mathop{\mathrm{inj}}\nolimits}
\newcommand{\op}{\mathop{\mathrm{op}}\nolimits}
\newcommand\M          {{\mathcal{M}}}
\newcommand{\Ker}{\mathop{\mathrm{Ker}}\nolimits}
\newcommand{\Tor}{\mathop{\mathrm{Tor}}\nolimits}
\newcommand{\Tot}{\mathop{\mathrm{Tot}}\nolimits}
\newcommand{\Ext}{\mathop{\mathrm{Ext}}\nolimits}
\newcommand{\sg}{\mathop{\mathrm{sg}}\nolimits}
\newcommand{\Sl}{\mathop{\mathfrak{sl}}\nolimits}
\newcommand{\nc}{\mathop{\mathrm{nc}}\nolimits}
\newcommand{\Tr}{\mathop{\mathrm{Tr}}\nolimits}
\newcommand{\Irr}{\mathop{\mathrm{Irr}}\nolimits}
\newcommand{\HC}{\mathop{\mathrm{HC}}\nolimits}
\newcommand{\HH}{\mathop{\mathrm{HH}}\nolimits}
\newcommand{\THH}{\mathop{\mathrm{TH}}\nolimits}
\newcommand{\Perf}{\mathop{\mathrm{Perf}}\nolimits}
\newcommand{\del}{\partial}

\newcommand{\verteq}{\rotatebox{90}{$\;\;=\;\;$}}

	\begin{abstract}
We develop an obstruction theory for the existence of gauge equivalences in complete differential graded Lie algebras. Specifically, this theory provides a characterization of homotopy equivalences between differential graded algebras governed by operads or properads, potentially colored in a groupoid.  We apply this framework to establish new homotopy equivalence results in both algebraic topology and algebraic geometry, with a particular focus on the study of minimal models for highly connected varieties.
\end{abstract}


	\maketitle
	

\makeatletter
\def\@tocline#1#2#3#4#5#6#7{\relax
	\ifnum #1>\c@tocdepth 
	\else
	\par \addpenalty\@secpenalty\addvspace{#2}%
	\begingroup \hyphenpenalty\@M
	\@ifempty{#4}{%
		\@tempdima\csname r@tocindent\number#1\endcsname\relax
	}{%
		\@tempdima#4\relax
	}%
	\parindent\z@ \leftskip#3\relax \advance\leftskip\@tempdima\relax
	\rightskip\@pnumwidth plus4em \parfillskip-\@pnumwidth
	#5\leavevmode\hskip-\@tempdima
	\ifcase #1
	\or\or \hskip 1em \or \hskip 2em \else \hskip 3em \fi%
	#6\nobreak\relax
	\hfill\hbox to\@pnumwidth{\@tocpagenum{#7}}\par
	\nobreak
	\endgroup
	\fi}

\newcommand{\enableopenany}{%
	\@openrightfalse%
}
\makeatother
	
	\setcounter{tocdepth}{1}
	\tableofcontents
	
	
	

\section*{\textcolor{bordeau}{Introduction}}

\noindent  Rational homotopy theory studies the rational homotopy type of topological spaces. Topological spaces $X$ and $Y$ have the same rational homotopy type if their rational cochain algebras are \emph{homotopy equivalent} as differential graded associative algebras, i.e. connected by a zig-zag    \[ C^*(X; \mathbb{Q}) \overset{\sim}{\longleftarrow} \cdot \overset{\sim}{\longrightarrow} \cdots \overset{\sim}{\longleftarrow} \cdot \overset{\sim}{\longrightarrow} C^*(Y; \mathbb{Q}) \ , \] of  quasi-isomorphism  that are morphisms of differential graded (dg) associative algebras inducing isomorphisms in cohomology. A space $X$ is \emph{formal} when the singular cochain algebra $C^*(X; \mathbb{Q})$ is homotopy equivalent to its cohomology algebra. In this case, any invariant of its rational homotopy type can be computed from its cohomology ring. \medskip

\noindent Formality is often imposed by additional geometric structure. Classical examples include compact Kähler manifolds and positive quaternionic--Kähler manifolds, both of which are formal; see \cite{DGMS75,AK12}. In the latter setting, formality was studied as a first step toward the LeBrun--Salamon conjecture, which predicts that every positive quaternionic--Kähler manifold is a symmetric space. Since symmetric spaces are formal, any failure of formality would have immediately obstructed this conjecture. Related, though weaker, phenomena occur in Sasakian geometry. Sasakian manifolds have vanishing higher-order Massey products; see \cite{BFVT16}. Altogether, these examples show that geometric structures can place strong constraints on the rational homotopy type of a manifold. Hence, proving formality or an appropriate weaker substitute, can be a powerful tool for deriving concrete geometric consequences for the manifolds under consideration. \medskip 

\noindent
Analogous questions can be considered over a general commutative coefficient ring $R$. In this setting, singular cochains $C^*(X;R)$ are naturally regarded as associative dg algebras or, more precisely, as $E_\infty$-algebras. Mandell's theorem \cite{Man06} shows that, in the appropriate setting, the $E_\infty$-algebra structure on cochains contains remarkably strong information about the homotopy type of $X$. Even the underlying associative dg algebra can retain significant geometric and topological information. For example, for an oriented closed manifold, it contains several invariants dealing with its string topology.

\noindent Homotopy equivalences between singular cochain algebras appear as a particular case of the following much more general notion. \medskip

\paragraph{\bf Definition.} Let $R$ be a commutative ground ring and let $A$ and $B$ be  dg $R$-modules. Let $\P$ be a type of algebraic structure (e.g. Lie algebras, Frobenius bialgebras, operads, etc.) 
\begin{enumerate}
	\item Two dg $\P$-algebra structures $(A,\varphi)$ and $(B,\psi)$ are said to be \emph{homotopy equivalent} if there exists a zig-zag of quasi-isomorphisms of dg $\P$-algebras relating them:
	\[(A,\varphi) \overset{\sim}{\longleftarrow} \cdot \overset{\sim}{\longrightarrow} \cdots \overset{\sim}{\longleftarrow} \cdot \overset{\sim}{\longrightarrow} (B, \psi) \ . \] 
	\item A dg $\P$-algebra $(A, \varphi)$ is said to be \emph{formal} if it is homotopy equivalent to the dg $\P$-algebra induced on its homology, $\left(H(A),\varphi^*\right)$ .
\end{enumerate}

\noindent The purpose of this article is to construct invariants characterizing homotopy equivalence and formality for such algebraic structures. \medskip

\paragraph{\bf Kaledin classes.}
The Kaledin classes constructed in \cite{Kaledin} provide a complete obstruction to the formality of algebraic structures encoded by a colored operad or a properad $\P$. Their construction rests on the following fact: in characteristic zero, homotopy-equivalence problems can be recast as deformation problems, \cite{graphexp}. More precisely, after transferring the algebraic structure to homology, homotopy equivalences correspond to gauge equivalences between Maurer--Cartan elements in the dg Lie algebra
\[\mathfrak{g} \coloneqq \Hom_{\mathbb{S}} \left(\Bar{\P}, \End_{H(A)}\right) \ , \]
see Section~\ref{2.1}. From this perspective, formality is the special case of gauge triviality: the dg $\P$-algebra $(A,\varphi)$ is formal precisely when the Maurer--Cartan element encoding its transferred structure is gauge equivalent to zero. The Kaledin class $K_{\varphi}$ detects exactly this phenomenon: it vanishes if and only if $(A,\varphi)$ is formal.

\begin{center}
	\emph{Can Kaledin classes be extended from formality to homotopy equivalence?}
\end{center}

\noindent
Equivalently, can one detect whether two arbitrary Maurer--Cartan elements in a complete dg Lie algebra are gauge equivalent? The main purpose of the present article is to answer this question.
\medskip

\paragraph{\bf Obstruction sequences to gauge equivalences}
Let $\varphi$ and $\psi$ be Maurer--Cartan elements in a complete dg Lie algebra $(\mathfrak g,d)$. Twisting by $\psi$ turns the problem into a gauge-triviality problem: the difference $\varphi-\psi$ is a Maurer--Cartan element in the twisted dg Lie algebra
\[
\mathfrak g^\psi
\coloneqq
\bigl(\mathfrak g,d+[\psi,-]\bigr),
\]
and $\varphi$ and $\psi$ are gauge equivalent if and only if $\varphi-\psi$ is gauge equivalent to zero in $\mathfrak g^\psi$. Although the Kaledin class is no longer available in this general twisted setting, the problem can still be solved order by order along the complete descending filtration $\mathcal F$ of $\mathfrak g$. Starting from a gauge trivialization modulo $\mathcal F^k\mathfrak g$, we construct a homology class
\[
\vartheta_k
\in
H_{-1}
\left(
\mathfrak g^\psi/\mathcal F^{k+1}\mathfrak g
\right)
\]
whose vanishing is precisely the obstruction to extending this trivialization modulo $\mathcal F^{k+1}\mathfrak g$. The construction is therefore inductive: as long as $\vartheta_k$ vanishes, one may proceed to the next stage; the first nonzero class detects the exact order at which gauge equivalence fails. This produces a \emph{gauge-triviality sequence}
$\left(\vartheta_k\right)_{1\leqslant k\leqslant n}$
of one of the following two types:
\begin{itemize}
	\item[$\centerdot$]
	an infinite sequence of vanishing classes, when $n=\infty$;
	\item[$\centerdot$]
	a finite sequence, when $n<\infty$.
\end{itemize}
The index
$
n\in[\![1,\infty]\!]$
depends only on the pair $(\varphi,\psi)$ and is called its \emph{gauge-equivalence degree}. It measures how far the Maurer--Cartan elements remain gauge equivalent along the filtration. In particular, we have $n=\infty$ if and only if $\varphi$ and $\psi$ are gauge equivalent modulo $\mathcal F^k\mathfrak g$ for every $k$; see Theorem~\ref{suites d'obstru}. Since the construction depends only on the complete dg Lie algebra governing the deformation problem, it applies not only to homotopy equivalences, but more generally to any deformation problem encoded by such an algebra. As a first illustration, Section~\ref{section1} concludes with an application to the homotopy triviality of fibrations.
\medskip

\noindent \textbf{Theorem \ref{fibrations}.}
\textit{Let $X$ be a simply connected topological space and let $F$ be a nilpotent space of finite $\mathbb{Q}$-type. A fibration $\xi$ over $X$, with fiber $F_{\mathbb{Q}}$, is trivial up to homotopy if and only if the fibration obtained by extending  the scalars fiberwise to $\mathbb{R}$, is trivial up to homotopy.} \bigskip

\paragraph{\bf Obstruction sequences to homotopy equivalences.}
In Section~\ref{section2}, we apply gauge triviality sequences to the main deformation problem considered in this article: determining whether two algebraic structures encoded by a properad or a colored operad are homotopy equivalent. Let $R$ be a $\mathbb{Q}$-algebra, let $\C$ be a reduced conilpotent dg coproperad (respectively, a reduced conilpotent colored dg cooperad), and let $\Omega\C$ denote its cobar construction. Given two $\Omega\C$-algebras whose underlying complexes have isomorphic cohomology, their transferred structures may be viewed, after choosing such an isomorphism, as Maurer--Cartan elements in a common complete dg Lie algebra $\mathfrak g$. The gauge-equivalence degree introduced above then measures, order by order along the filtration of $\mathfrak g$, how far these transferred structures are from being gauge equivalent. This yields the following criterion.

\medskip

\noindent \textbf{Theorem~\ref{obstru en char 0}.}
\textit{Let $A$ and $B$ be chain complexes over $R$ with isomorphic cohomology, and let $(A,\varphi)$ and $(B,\psi)$ be two $\Omega\C$-algebras whose structures admit transfer to cohomology. The following assertions are equivalent:}
\begin{enumerate}
	\item \textit{The algebras $(A,\varphi)$ and $(B,\psi)$ are homotopy equivalent modulo $\mathcal F^k\mathfrak g$ for every $k$.}
	\item \textit{The gauge-equivalence degree of their transferred structures is infinite.}
\end{enumerate}

\medskip

\noindent Under suitable boundedness or weight-grading assumptions, these equivalent conditions are further equivalent to the existence of an actual homotopy equivalence between $(A,\varphi)$ and $(B,\psi)$; see Theorems~\ref{the bounded case} and~\ref{the weight case}. In the weight-graded setting, the result extends to arbitrary commutative ground rings, and the successive obstructions can be identified with truncations of the Kaledin class; see Sections~\ref{2.4} and~\ref{2.5}. One first application is a generalization of formality descent (\cite[Section~4.1]{Kaledin}) to homotopy equivalence decent, see Theorem \ref{eq descent}. 

 \medskip

\noindent Theorem~\ref{obstru en char 0} applies, in particular, to algebras encoded by a properad $\P$, thus allowing for operations with multiple outputs. To treat the case of positive characteristic, Section~\ref{2.3} refines the properadic calculus over arbitrary commutative coefficient rings. A notable application of this framework arises from joint work with Alex Takeda \cite{ET24} on the properadic coformality of spheres. Recall that a topological space $X$ is said to be coformal when the chain algebra of its based loop space, $C_*(\Omega X;\mathbb{Q})$, is formal as an $\mathcal{A}_{\infty}$-algebra. For an oriented manifold, however, this algebra carries additional structure: Kontsevich, Takeda, and Vlassopoulos showed in \cite{KTV21} that Poincaré duality can be encoded by equipping $C_*(\Omega X;\mathbb{Q})$ with a pre-Calabi--Yau structure. Properadic coformality asks for more than ordinary coformality: the zig-zag of quasi-isomorphisms realizing formality must preserve this additional structure, and hence Poincaré duality. In \cite{ET24}, we use gauge-triviality sequences to prove a stronger form of properadic coformality for spheres:

\medskip

\noindent \textbf{Theorem~\cite{ET24}.}
	\textit{The pre-Calabi--Yau algebra $C_*(\Omega S^n;\mathbb{Q})$ is intrinsically formal for any $n \geqslant 1$, i.e. determined up to homotopy by the structure induced on homology.}
	
	\medskip

\paragraph{\bf Minimal models for highly connected varieties.}

In \cite{Mil79}, Miller proved that the de Rham algebra of a compact $k$-connected manifold of dimension $d<4k+2$ is formal. This result was extended by Zhou \cite{Zho22}, who showed that, for $\ell\geqslant 3$, the de Rham algebra of a compact $k$-connected manifold $M$ of dimension $d<(\ell+1)k+2$ has depth $\ell - 1$: it is homotopy equivalent to a strictly unital $\mathcal A_\infty$-algebra of the form
\[
\bigl(H_{\mathrm{dR}}^*(M),\psi_2,\ldots,\psi_{\ell-1}\bigr).
\]
In Section~\ref{section3}, we use the obstruction theory developed above to extend these results to other cohomology theories and coefficient rings.

\medskip

\noindent \textbf{Theorem~\ref{poincaré}.}
\textit{Let $\ell\geqslant 3$, and let $\mathbb K$ be a field in which $\ell$ and $\ell+1$ are invertible. Let $M$ be a compact $k$-connected smooth manifold of dimension $d<(\ell+1)k+2$. Then its singular cochain algebra $C_{\mathrm{sing}}^*(M;\mathbb K)$ has depth $\ell-1$.}

\medskip

\noindent An analogous statement holds for étale cochains.

\medskip

\noindent \textbf{Theorem~\ref{poincaré2}.}
\textit{Let $q$ be a prime number, and let $K$ be a separably closed field in which $q$ is invertible. Let $\ell\geqslant 3$ be such that $\ell$ and $\ell+1$ are invertible in $\mathbb F_q$. Let $X$ be a $k$-connected irreducible smooth proper variety over $K$ of dimension $d<(\ell+1)k+2$. Then its étale cochain algebra $C_{\mathrm{\acute{e}t}}^*(X;\mathbb F_q)$ has depth $\ell-1$.}

\medskip

\paragraph{\bf Notations and conventions. }

\begin{itemize}
	\item[$\centerdot$] We work over a commutative ground ring $R$ of characteristic $\mathrm{char}(R)$. All tensor products are taken over $R$ and every morphism is $R$-linear unless otherwise specified.
	\item[$\centerdot$]  We work in the symmetric monoidal category of chain complexes over $R$ (with the Koszul sign rule). If $A$ is a chain complex and $x \in A$ is a homogeneous element, we denote by $|x|$ its homological degree.
	\item[$\centerdot$] The abbreviation ``dg'' stands for the words ``differential graded''.
	\item[$\centerdot$]  We use the notations of \cite{LodayVallette12} for operads and the ones of \cite{PHC} for properads. \medskip
\end{itemize}

\noindent \textbf{Acknowledgements}.
I would like to thank my advisors, Geoffroy Horel and Bruno Vallette, for their constant support. I am grateful to Olivier Benoist, Alexander Berglund, and Jiawei Zhou for enlightening conversations related to \cite{SGA4}, \cite{berglund15}, and \cite{Zho22}, respectively. \medskip

\section{\textcolor{bordeau}{Obstruction sequences to gauge equivalences}}\label{section1}

 In this section, the ground ring $R$ is a $\mathbb{Q}$-algebra. Let $\varphi$ and $\psi$ be two Maurer--Cartan elements in a complete dg Lie algebra $\mathfrak{g}$: are they gauge equivalent? Section \ref{1.2} gives a first answer by constructing an obstruction sequence detecting whether $\varphi$ and $\psi$ are gauge equivalent modulo $\mathcal{F}^n \mathfrak{g}$, for all $n \geqslant 1$, where $\mathcal{F}$ denotes the complete descending filtration. This implies that $\varphi$ and $\psi$ are gauge equivalent in several cases, such as when the dg Lie algebra $\mathfrak{g}$ is bounded (see Section \ref{bound}), weight-graded (see Section \ref{formality}) or when they become gauge equivalent after scalar extension (see Section \ref{descent})

\subsection{Complete dg Lie algebras}\label{1.1}
We start by recalling the deformation theory controlled by a complete dg Lie algebra, see e.g. \cite[Chapter~1]{DSV22} for more details.

\begin{definition}[Complete dg Lie algebra]
	A \emph{complete differential graded (dg) Lie algebra} \[(\mathfrak{g}, [-,-], d, \mathcal{F})\] is a graded $R$-module $\mathfrak{g}$ equipped with 
	
	\begin{itemize}
		\item[$\centerdot$] a Lie bracket $[-,-] : \mathfrak{g} \otimes \mathfrak{g} \to \mathfrak{g}$, i.e. homogeneous map of degree $0$, satisfying the Jacoby identity and the antisymmetry properties; 
		\item[$\centerdot$] a differential $d : \mathfrak{g} \to \mathfrak{g}$, i.e. a degree $-1$ derivation that squares to zero; 
		\item[$\centerdot$] a complete descending filtration $\mathcal{F}$, i.e. a decreasing chain of sub-complexes \[\mathfrak{g} = \mathcal{F}^1 \mathfrak{g} \supset \mathcal{F}^2 \mathfrak{g} \supset \mathcal{F}^3 \mathfrak{g} \supset \cdots  \] compatible with the bracket $\left[\mathcal{F}^n \mathfrak{g}, \mathcal{F}^m \mathfrak{g} \right] \subset \mathcal{F}^{n+m} \mathfrak{g}$, and such that the canonical projections $\pi_n : \mathfrak{g} \twoheadrightarrow  \mathfrak{g} /\mathcal{F}^n \mathfrak{g}$ lead to an isomorphism \[\pi : \mathfrak{g} \to \lim_{n \in \mathbb{N}} \mathfrak{g} / \mathcal{F}^n\mathfrak{g} \ . \] 
	\end{itemize}
	In the sequel, we will usually use the same notation $\mathfrak{g}$ for the underlying chain
	complex and the full data of complete dg Lie algebra.
\end{definition}

\begin{definition}[Maurer--Cartan element]
	The set of \emph{Maurer--Cartan elements} of a complete dg Lie algebra $\mathfrak{g}$ is defined as \[\mathrm{MC}(\mathfrak{g}) \coloneqq \lbrace \varphi \in \mathfrak{g}_{-1} \mid  d\left(\varphi\right) + \tfrac{1}{2}\left[\varphi, \varphi \right] = 0\rbrace \ .\]
\end{definition}

\begin{proposition}
	The \emph{gauge group} of a complete dg Lie algebra $\mathfrak{g}$ is the group $\left(\mathfrak{g}_0, \mathrm{BCH}, 0\right)$ obtained from the set $\mathfrak{g}_0$ via Baker--Campbell--Hausdorff formula defined by \[\mathrm{BCH}(\lambda,\nu)\coloneqq \mathrm{ln}\left(e^{\lambda}e^{\nu}\right) \ , \] in the associative algebra of formal power series on $\lambda$ and $\nu$. It acts on the set of Maurer--Cartan elements through the gauge action defined for $\lambda \in \mathfrak{g}_0$ and $\varphi \in \mathrm{MC}(\mathfrak{g})$ by \[ \lambda \cdot \varphi \coloneqq e^{ \mathrm{ad}_{\lambda}}(\varphi) - \frac{ e^{ \mathrm{ad}_{\lambda}} - \mathrm{id}}{\mathrm{ad}_{\lambda}} (d \lambda) \ ,  \]  where $\mathrm{ad}_{\lambda} \coloneqq \left[\lambda, -\right]$ is the adjoint operator. 
\end{proposition}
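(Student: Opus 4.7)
The plan is to verify three things in turn: convergence of the defining series, the group axioms for the $\mathrm{BCH}$ product on $\mathfrak{g}_0$, and the action axioms together with preservation of $\mathrm{MC}(\mathfrak{g})$.

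First I would settle convergence. Since $[\mathcal{F}^n\mathfrak{g},\mathcal{F}^m\mathfrak{g}]\subset\mathcal{F}^{n+m}\mathfrak{g}$ and $\mathfrak{g}=\mathcal{F}^1\mathfrak{g}$, every $k$-fold nested bracket of elements of $\mathfrak{g}$ lies in $\mathcal{F}^k\mathfrak{g}$. Under the completeness isomorphism $\mathfrak{g}\cong\lim_n\mathfrak{g}/\mathcal{F}^n\mathfrak{g}$, any series whose term of bracket length $k$ lies in $\mathcal{F}^k\mathfrak{g}$ therefore converges. This covers $\mathrm{BCH}(\lambda,\nu)$ for $\lambda,\nu\in\mathfrak{g}_0$, the series $e^{\mathrm{ad}_\lambda}(\varphi)=\sum_k\tfrac{1}{k!}\mathrm{ad}_\lambda^k(\varphi)$, and $\chi(\mathrm{ad}_\lambda)(d\lambda)=\sum_k\tfrac{1}{(k+1)!}\mathrm{ad}_\lambda^k(d\lambda)$, where $\chi(x)\coloneqq (e^x-1)/x$.

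Next, the group axioms. Associativity $\mathrm{BCH}(\lambda,\mathrm{BCH}(\mu,\nu))=\mathrm{BCH}(\mathrm{BCH}(\lambda,\mu),\nu)$, the neutrality of $0$, and inversion by $-\lambda$ are universal identities in the free complete pronilpotent Lie algebra on the generators involved: they already hold in the associative algebra of formal power series via $e^{\lambda}e^{\nu}=e^{\mathrm{BCH}(\lambda,\nu)}$. By the universal property of the free complete Lie algebra, they transport to $\mathfrak{g}$ along the unique continuous Lie morphism sending the generators to the prescribed elements of $\mathfrak{g}_0$.

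The heart of the proof is then the action axioms and preservation of $\mathrm{MC}(\mathfrak{g})$. The cleanest route is to encode the MC equation through the square-zero condition for the extended operator $d_\varphi\coloneqq d+\mathrm{ad}_\varphi$ on $\mathfrak{g}$: indeed $(d_\varphi)^2=\mathrm{ad}_{d\varphi+\frac{1}{2}[\varphi,\varphi]}$. I would then establish the key conjugation identity
\[ e^{\mathrm{ad}_\lambda}\,d_\varphi\,e^{-\mathrm{ad}_\lambda} \;=\; d_{\lambda\cdot\varphi}\ . \]
To prove it, differentiate the one-parameter family $\alpha(t)\coloneqq e^{t\mathrm{ad}_\lambda}\,d\,e^{-t\mathrm{ad}_\lambda}$; because $d$ is a derivation of the bracket, $[\mathrm{ad}_\lambda,d]=-\mathrm{ad}_{d\lambda}$, hence $\alpha'(t)=-\mathrm{ad}_{e^{t\mathrm{ad}_\lambda}(d\lambda)}$. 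Integrating from $0$ to $1$ yields precisely $d-\mathrm{ad}_{\chi(\mathrm{ad}_\lambda)(d\lambda)}$, and combining with $e^{\mathrm{ad}_\lambda}\,\mathrm{ad}_\varphi\,e^{-\mathrm{ad}_\lambda}=\mathrm{ad}_{e^{\mathrm{ad}_\lambda}(\varphi)}$ gives the displayed identity. Since conjugation preserves squaring to zero, $\lambda\cdot\varphi\in\mathrm{MC}(\mathfrak{g})$ whenever $\varphi$ is. The composition rule $\lambda\cdot(\mu\cdot\varphi)=\mathrm{BCH}(\lambda,\mu)\cdot\varphi$ then follows from the universal identity $e^{\mathrm{ad}_\lambda}e^{\mathrm{ad}_\mu}=e^{\mathrm{ad}_{\mathrm{BCH}(\lambda,\mu)}}$ in $\mathrm{End}(\mathfrak{g})$ combined with the conjugation identity, the resulting formal equality being interpreted in the free complete Lie algebra to avoid any centre ambiguity before pushing it forward to $\mathfrak{g}$.

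The main obstacle is precisely the conjugation identity, which hinges on the ODE computation $\alpha'(t)=-\mathrm{ad}_{e^{t\mathrm{ad}_\lambda}(d\lambda)}$ and the formal manipulation of operator series in the complete topology. Once it is in hand, both Maurer--Cartan preservation and the action axiom reduce to routine algebra, and the whole proof unfolds as a chain of universal identities in free complete Lie algebras pushed forward to $\mathfrak{g}$.
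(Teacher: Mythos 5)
The paper does not actually prove this proposition: its ``proof'' is a one-line citation of \cite[Theorem~1.53]{DSV22}, so any self-contained argument is by construction a different route. Your proposal is essentially correct and follows one of the standard proofs, namely conjugation of the twisted differential: the identity $e^{\mathrm{ad}_\lambda}\,d_\varphi\,e^{-\mathrm{ad}_\lambda}=d_{\lambda\cdot\varphi}$ obtained from the ODE $\alpha'(t)=-\mathrm{ad}_{e^{t\mathrm{ad}_\lambda}(d\lambda)}$ is the right key lemma, the convergence argument from $[\mathcal{F}^n\mathfrak{g},\mathcal{F}^m\mathfrak{g}]\subset\mathcal{F}^{n+m}\mathfrak{g}$ is sound, and the reduction of the group and action axioms to universal identities pushed forward from free complete Lie algebras is the standard device. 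The one point that deserves more than a passing mention is the centre ambiguity you flag at the end: the conjugation identity only yields $\mathrm{ad}_{\kappa(\lambda\cdot\varphi)}=\mathrm{ad}_{e^{\mathrm{ad}_\lambda}\kappa(\varphi)}$, where $\kappa$ denotes the Maurer--Cartan curvature, and this affects the preservation of $\mathrm{MC}(\mathfrak{g})$ just as much as the composition rule, not only the latter as your write-up suggests. To discharge it you must verify the curvature identity $\kappa(\lambda\cdot\varphi)=e^{\mathrm{ad}_\lambda}(\kappa(\varphi))$ in the completed free dg Lie algebra on generators $\lambda$, $d\lambda$, $\varphi$, $d\varphi$ (which has trivial centre) and then specialize; alternatively one can bypass the issue entirely by integrating the flow $\varphi'(t)=-d\lambda-[\lambda,\varphi(t)]$ and checking that the curvature satisfies the linear equation $\kappa'(t)=-[\lambda,\kappa(t)]$, which is closer to how \cite{DSV22} proceeds. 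With that caveat made uniform, your argument is complete and buys a fully explicit, reference-free proof.
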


\begin{proof}
	See \cite[Theorem~1.53]{DSV22}. 
\end{proof}

\begin{definition}[Gauge equivalences]\label{gauge equivalences}
	Two Maurer--Cartan elements $\varphi$ and $\psi$ in a complete dg Lie algebra $\mathfrak{g}$ are \emph{gauge equivalent} if there exists $\lambda \in \mathfrak{g}_0$ such that \[ \lambda \cdot \varphi = \psi \ .\] The \emph{moduli space of Maurer–Cartan elements} $\mathcal{MC}(\mathfrak{g})$ is the coset of Maurer–Cartan elements modulo the gauge action. A Maurer--Cartan element $\varphi$ is said \emph{gauge trivial} if it is gauge equivalent to zero. 
\end{definition}

\begin{proposition}\label{tech}
	Let $\mathfrak{g}$ be a complete dg Lie algebra and let $\varphi$ and $\psi$ in $\mathrm{MC}(\mathfrak{g})$. 
	\begin{enumerate}
		\item The operator $d^{\psi} \coloneqq d + \mathrm{ad}_{\psi}$ is a differential of $\mathfrak{g}$ and defines a complete dg Lie algebra \[\mathfrak{g}^{\psi} \coloneqq \left(\mathfrak{g},  [-,-], d^{\psi}, \mathcal{F} \right) \ .\]  
		\item The difference $\varphi - \psi$ is a Maurer-Cartan element in $\mathfrak{g}^{\psi}$. 
		\item For all $\lambda \in \mathfrak{g}_0$, the following relations are equivalent: 
		\[ \lambda \cdot \varphi  = \psi  \; \mbox{in} \; \mathfrak{g} \; \Longleftrightarrow \; \lambda \cdot (\varphi - \psi) = 0 \; \mbox{in} \; \mathfrak{g}^{\psi}  \ .\] 
	\end{enumerate}
\end{proposition}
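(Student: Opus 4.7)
For assertion (1), the plan is to verify that $d^{\psi}$ squares to zero, is a derivation of the bracket, and is compatible with the filtration $\mathcal{F}$. Expanding
\[
(d^{\psi})^{2} = d^{2} + d \circ \mathrm{ad}_{\psi} + \mathrm{ad}_{\psi} \circ d + \mathrm{ad}_{\psi}^{2},
\]
I would use the graded Leibniz rule to rewrite $d \circ \mathrm{ad}_{\psi} + \mathrm{ad}_{\psi} \circ d = \mathrm{ad}_{d\psi}$ (with the sign coming from $|\psi| = -1$), and the graded Jacobi identity applied to the odd element $\psi$ to rewrite $\mathrm{ad}_{\psi}^{2} = \tfrac{1}{2}\mathrm{ad}_{[\psi,\psi]}$. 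Summing these, $(d^{\psi})^{2} = \mathrm{ad}_{d\psi + \tfrac{1}{2}[\psi,\psi]}$, which vanishes by the Maurer--Cartan equation for $\psi$. The Leibniz rule for $d^{\psi}$ follows from the Leibniz rule for $d$ and the graded Jacobi identity. Finally, since $\psi \in \mathfrak{g} = \mathcal{F}^{1}\mathfrak{g}$, the compatibility of the bracket with the filtration gives $\mathrm{ad}_{\psi}(\mathcal{F}^{n}\mathfrak{g}) \subset \mathcal{F}^{n+1}\mathfrak{g} \subset \mathcal{F}^{n}\mathfrak{g}$, so $d^{\psi}$ preserves each $\mathcal{F}^{n}\mathfrak{g}$ and $\mathfrak{g}^{\psi}$ is again a complete dg Lie algebra.

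For assertion (2), the plan is a direct computation of $d^{\psi}(\varphi - \psi) + \tfrac{1}{2}[\varphi - \psi, \varphi - \psi]$. Expanding the bracket and using that $\varphi$ and $\psi$ are both of degree $-1$, so that the graded antisymmetry gives $[\varphi,\psi] = [\psi,\varphi]$, the expression collapses to
\[
\bigl(d\varphi + \tfrac{1}{2}[\varphi,\varphi]\bigr) - \bigl(d\psi + \tfrac{1}{2}[\psi,\psi]\bigr),
\]
which is zero by the Maurer--Cartan equations for $\varphi$ and $\psi$.

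For assertion (3), the strategy is to unfold the gauge action formula in $\mathfrak{g}$ and in $\mathfrak{g}^{\psi}$ and compute the difference $\lambda \cdot (\varphi - \psi) - (\lambda \cdot \varphi - \psi)$ directly. Using that $d^{\psi}\lambda = d\lambda + [\psi, \lambda]$ and the graded antisymmetry $[\psi,\lambda] = -\mathrm{ad}_{\lambda}(\psi)$ (since $|\psi| = -1$ and $|\lambda| = 0$), the difference simplifies to
\[
-\bigl(e^{\mathrm{ad}_{\lambda}} - \mathrm{id}\bigr)(\psi) + \frac{e^{\mathrm{ad}_{\lambda}} - \mathrm{id}}{\mathrm{ad}_{\lambda}}\bigl(\mathrm{ad}_{\lambda}(\psi)\bigr),
\]
which vanishes identically. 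Consequently $\lambda \cdot (\varphi - \psi)$ computed in $\mathfrak{g}^{\psi}$ equals $(\lambda \cdot \varphi) - \psi$ computed in $\mathfrak{g}$, and the stated equivalence follows.

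The main obstacle is bookkeeping of signs and the orders of composition in the graded Leibniz and Jacobi identities: every step is elementary once the Koszul sign conventions are carefully set, and no convergence issue arises because $\mathrm{ad}_{\psi}$ strictly increases filtration degree, so all series considered converge in the complete topology of $\mathfrak{g}$.
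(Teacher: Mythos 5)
Your proposal is correct and follows essentially the same route as the paper: for Point (3) the key identity you verify (that the difference of the two gauge-action expressions collapses via $d^{\psi}\lambda - d\lambda = -\mathrm{ad}_{\lambda}(\psi)$) is exactly the middle step in the paper's chain of equivalences, while for Points (1) and (2) the paper simply cites \cite[Proposition~1.43, Lemma~1.56]{DSV22} and you supply the standard direct verifications. No gaps.
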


\begin{proof} 
For Point (1) and (2), we refer the reader to \cite[Proposition~1.43, Lemma~1.56]{DSV22}. Point (3) follows from the following equivalences:   \[	\begin{split}                                  
	\lambda \cdot \varphi  = \psi  \; \mbox{in} \; \mathfrak{g} 
	& \; \Longleftrightarrow \;  e^{ \mathrm{ad}_{\lambda}}(\varphi) - \frac{ e^{ \mathrm{ad}_{\lambda}} - \mathrm{id}}{\mathrm{ad}_{\lambda}} (d \lambda) = \psi \\
	& \; \Longleftrightarrow  \; e^{ \mathrm{ad}_{\lambda}}(\varphi - \psi) - \frac{ e^{ \mathrm{ad}_{\lambda}} - \mathrm{id}}{\mathrm{ad}_{\lambda}} (d^{\psi} (\lambda)) = 0  \\
	&\; \Longleftrightarrow \; \lambda \cdot (\varphi - \psi) = 0 \; \mbox{in} \; \mathfrak{g}^{\psi} \ . 
\end{split}  \]
\end{proof}

\subsection{Obstruction sequences to gauge equivalences}\label{1.2}

Let $\varphi$ and $\psi$ be two Maurer--Cartan elements in a complete dg Lie algebra $\mathfrak{g}$. The key issue is to detect whether $\varphi$ and $\psi$ are gauge equivalent. This section gives a first answer by constructing an obstruction sequence detecting whether for all $n \geqslant 1$, there exists $\omega_n \in \mathfrak{g}_0$, such that \[\omega_n \cdot \varphi \equiv \psi \pmod{\mathcal{F}^n \mathfrak{g}} \  .\] Let us set $\mathfrak{h} \coloneqq \mathfrak{g}^{\psi}$ and $\phi \coloneqq \varphi - \psi  \in \mathrm{MC}(\mathfrak{h})$. By Point (3) of Proposition \ref{tech}, we have \[\omega_n \cdot \varphi \equiv \psi \pmod{\mathcal{F}^n \mathfrak{g}} \iff \omega_n \cdot \phi \in \mathcal{F}^n \mathfrak{h} \ . \]

\begin{proposition}\label{obstru}
	Let $\mathfrak{h}$ be a complete dg Lie algebra and let $n \geqslant 1$. Let $\phi \in \mathrm{MC}(\mathfrak{h})$ be a Maurer--Cartan element such that $\phi \in \mathcal{F}^n \mathfrak{h}$. Let us consider \[\vartheta_{n} \coloneqq \left[\pi_{n+1} (\phi) \right] \in H_{-1}\left( \mathfrak{h} / \mathcal{F}^{n+1}\mathfrak{h}  \right) \ .\] The following assertions are equivalent.
	\begin{enumerate}
		\item The homology class $\vartheta_{n}$ vanishes. 	
		\item There exists $\upsilon \in \mathfrak{h}_0$ such that $\upsilon \cdot \phi \in  \mathcal{F}^{n+1} \mathfrak{h}$~.
	\end{enumerate}
\end{proposition}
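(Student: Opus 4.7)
The plan is to exploit the compatibility of the gauge action with the filtration in order to linearize the obstruction, reducing both directions to a common simple computation. First I would confirm that $\vartheta_n$ is a well-defined homology class: since $\phi \in \mathcal{F}^n\mathfrak{h}$ with $n \geq 1$, the Maurer--Cartan equation gives $d\phi = -\tfrac{1}{2}[\phi,\phi] \in \mathcal{F}^{2n}\mathfrak{h} \subset \mathcal{F}^{n+1}\mathfrak{h}$, so $\pi_{n+1}(\phi)$ is indeed a $d$-cycle in $\mathfrak{h}/\mathcal{F}^{n+1}\mathfrak{h}$.

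The key reduction is the following. For any $\upsilon \in \mathfrak{h}_0 = \mathcal{F}^1 \mathfrak{h}_0$, the inclusion $[\mathcal{F}^1,\mathcal{F}^n]\subset \mathcal{F}^{n+1}$ yields $\operatorname{ad}_\upsilon^k(\phi) \in \mathcal{F}^{n+k}\mathfrak{h}$ for $k \geq 1$, so $e^{\operatorname{ad}_\upsilon}(\phi) \equiv \phi \pmod{\mathcal{F}^{n+1}\mathfrak{h}}$ and the gauge formula becomes
\[
\upsilon \cdot \phi \equiv \phi - \tfrac{e^{\operatorname{ad}_\upsilon}-\mathrm{id}}{\operatorname{ad}_\upsilon}(d\upsilon) \pmod{\mathcal{F}^{n+1}\mathfrak{h}}.
\]
If moreover $d\upsilon \in \mathcal{F}^n\mathfrak{h}$, then $\operatorname{ad}_\upsilon^k(d\upsilon) \in \mathcal{F}^{n+k}\mathfrak{h}$ also vanishes modulo $\mathcal{F}^{n+1}$ for $k \geq 1$, and this collapses to $\upsilon \cdot \phi \equiv \phi - d\upsilon \pmod{\mathcal{F}^{n+1}\mathfrak{h}}$. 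The implication (1) $\Rightarrow$ (2) is then immediate: given $\vartheta_n = 0$, any lift $\mu \in \mathfrak{h}_0$ of a primitive of $\pi_{n+1}(\phi)$ satisfies $d\mu \equiv \phi \pmod{\mathcal{F}^{n+1}\mathfrak{h}}$ and thus $d\mu \in \mathcal{F}^n\mathfrak{h}$, so the simplified formula yields $\mu \cdot \phi \equiv \phi - d\mu \equiv 0 \pmod{\mathcal{F}^{n+1}\mathfrak{h}}$.

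The subtle step, which I expect to be the main obstacle, is the converse (2) $\Rightarrow$ (1), since the hypothesis provides an arbitrary $\upsilon \in \mathfrak{h}_0$ with no a priori control on the filtration degree of $d\upsilon$. My plan is to recover that control by first projecting the assumption modulo the coarser filtration: since $\upsilon \cdot \phi \in \mathcal{F}^{n+1}\mathfrak{h} \subset \mathcal{F}^n\mathfrak{h}$ and $\phi \in \mathcal{F}^n\mathfrak{h}$, reducing to $\mathfrak{h}/\mathcal{F}^n\mathfrak{h}$ turns the hypothesis into $\bar\upsilon \cdot 0 = 0$, i.e., $\tfrac{e^{\operatorname{ad}_{\bar\upsilon}}-\mathrm{id}}{\operatorname{ad}_{\bar\upsilon}}(d\bar\upsilon) = 0$ in $\mathfrak{h}/\mathcal{F}^n\mathfrak{h}$. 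On this quotient the filtration has finite length and $\operatorname{ad}_{\bar\upsilon}$ is filtration-raising, hence nilpotent; since we work over a $\mathbb{Q}$-algebra, the operator $\tfrac{e^{\operatorname{ad}_{\bar\upsilon}}-\mathrm{id}}{\operatorname{ad}_{\bar\upsilon}} = \mathrm{id} + \tfrac{1}{2}\operatorname{ad}_{\bar\upsilon} + \cdots$ is then identity plus a nilpotent endomorphism, hence invertible. This forces $d\bar\upsilon = 0$, i.e.~$d\upsilon \in \mathcal{F}^n\mathfrak{h}$, placing us back in the simplified setting; combined with the original hypothesis, $\upsilon \cdot \phi \equiv \phi - d\upsilon \pmod{\mathcal{F}^{n+1}\mathfrak{h}}$ now gives $\phi \equiv d\upsilon \pmod{\mathcal{F}^{n+1}\mathfrak{h}}$, so $\vartheta_n = 0$.
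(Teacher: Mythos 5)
Your proof is correct and follows essentially the same route as the paper: both reduce the gauge formula modulo $\mathcal{F}^{n+1}\mathfrak{h}$ using $\mathrm{ad}_{\upsilon}^k(\phi)\in\mathcal{F}^{n+k}\mathfrak{h}$, and both hinge on showing that $d\upsilon\in\mathcal{F}^{n}\mathfrak{h}$ so that the operator $\tfrac{e^{\mathrm{ad}_{\upsilon}}-\mathrm{id}}{\mathrm{ad}_{\upsilon}}$ collapses to the identity on $d\upsilon$ modulo $\mathcal{F}^{n+1}\mathfrak{h}$. The only cosmetic difference is in the converse: where the paper runs an induction on $k$ to push $d\upsilon$ into $\mathcal{F}^{k}\mathfrak{h}$ for $1\leqslant k\leqslant n$, you project to the nilpotent quotient $\mathfrak{h}/\mathcal{F}^{n}\mathfrak{h}$ and invert the unipotent operator $\mathrm{id}+\tfrac{1}{2}\mathrm{ad}_{\bar\upsilon}+\cdots$, which is the same argument packaged as a single linear-algebra step.
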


\begin{proof}
	For all $\upsilon \in \mathfrak{h}_0$, the gauge action formula gives \begin{equation}\label{eq}
	 \upsilon \cdot \phi \equiv  \phi - \frac{ e^{ \mathrm{ad}_{\upsilon}} - \mathrm{id}}{\mathrm{ad}_{\upsilon}} (d \upsilon)  \pmod{\mathcal{F}^{n+1} \mathfrak{h}} \ .
\end{equation}  If $\vartheta_{n} = 0$, then there exists $\upsilon \in \mathfrak{h}_0$ such that \[ \phi \equiv d \upsilon \pmod{\mathcal{F}^{n+1} \mathfrak{h}} \ .\] Since $\phi \in \mathcal{F}^n \mathfrak{h}$, this implies that $d \upsilon \in \mathcal{F}^n \mathfrak{h}$ and \[ \frac{ e^{ \mathrm{ad}_{\upsilon}} - \mathrm{id}}{\mathrm{ad}_{\upsilon}} (d \upsilon) \equiv d \upsilon \pmod{\mathcal{F}^{n+1} \mathfrak{h}} \ . \] Equation (\ref{eq}) then implies that $\upsilon \cdot \phi \in \mathcal{F}^{n+1} \mathfrak{h} \ .$ Conversely, if point (2) holds, we have \[ \phi  \equiv \frac{ e^{ \mathrm{ad}_{\upsilon}} - \mathrm{id}}{\mathrm{ad}_{\upsilon}} (d \upsilon)   \pmod{\mathcal{F}^{n+1} \mathfrak{h}}  \]  by Equation (\ref{eq}). Since $\phi \in \mathcal{F}^n \mathfrak{h}$, one can prove by induction on $k$, for all $1 \leqslant k \leqslant n $, that   \[d \upsilon \in  \mathcal{F}^{k} \mathfrak{h}  \quad \mbox{and} \quad \phi  \equiv d \upsilon  \pmod{\mathcal{F}^{n+1} \mathfrak{h}} \ . \] This implies that $\vartheta_{n} = 0$. 
\end{proof}

\begin{construction} \label{constru}
Let $\phi \in \mathrm{MC}(\mathfrak{h})$ be a Maurer--Cartan element in a complete dg Lie algebra $\mathfrak{h}$. We aim to detect whether $\phi$ is gauge trivial. Let us set $\phi_1 \coloneqq \phi$ and let us consider \[\vartheta_1 \coloneqq \left[ \pi_2 (\phi_1)\right] \in H_{-1}\left( \mathfrak{h} / \mathcal{F}^{2} \mathfrak{h}  \right) . \]
	\begin{itemize}
		\item[$\centerdot$] If $\vartheta_1 \neq 0$~, then $\phi$ is not gauge trivial, by the implication $(2) \Rightarrow (1)$ of Proposition~\ref{obstru}. 
		\item[$\centerdot$] If $\vartheta_1 = 0$~, there exists $\upsilon_1 \in \mathfrak{h}_0$~, such that $\upsilon_1 \cdot \phi_1 \in  \mathcal{F}^{2} \mathfrak{h}$~, by the implication $(1) \Rightarrow (2)$ of Proposition \ref{obstru}.
	\end{itemize}
	If $\vartheta_1 = 0$~, we set $\phi_2 \coloneqq \upsilon_1 \cdot \phi$ and \[\vartheta_2 \coloneqq \left[ \pi_3 (\phi_2)\right] \in H_{-1}\left( \mathfrak{h} / \mathcal{F}^{3} \mathfrak{h}  \right) . \]
	\begin{itemize}
		\item[$\centerdot$] If $\vartheta_2 \neq 0$~, then $\phi$ is not gauge trivial. Indeed, if there exists $\lambda \in \mathfrak{h}_0$ such that $\lambda \cdot \phi = 0$ then $\mathrm{BCH}(\lambda, - \upsilon_1) \cdot \phi_2 = 0 $ and $\vartheta_2 = 0$~, by Proposition \ref{obstru}. 
		\item[$\centerdot$] If $\vartheta_2 = 0$~, there exists $\upsilon_2 \in \mathfrak{h}_0$~, such that $\upsilon_2 \cdot \phi_2  \in \mathcal{F}^{3} \mathfrak{g}$~, by Proposition \ref{obstru}.
	\end{itemize}
	If $\vartheta_2 = 0$~, let us set $\phi_3 \coloneqq \upsilon_2 \cdot \phi_2$ and \[\vartheta_3 \coloneqq \left[ \pi_4 (\phi_3)\right] \in H_{-1}\left( \mathfrak{h} / \mathcal{F}^{4} \mathfrak{h}  \right) . \] Once again, the following assertions hold by Proposition \ref{obstru}. 
	\begin{itemize}
		\item[$\centerdot$] If $\vartheta_3 \neq 0$~, then $\phi$ is not gauge trivial.   
		\item[$\centerdot$] If $\vartheta_3 = 0$~, there exists $\upsilon_3 \in \mathfrak{h}_0$~, such that $\upsilon_3 \cdot \phi_3  \in \mathcal{F}^{4} \mathfrak{h}$~.
	\end{itemize}

	\noindent The construction of such obstruction classes can be performed higher up in a similar way. This leads to a sequence of classes $\left(\vartheta_k \right)_{1 \leqslant k \leqslant n}$ which is either 
	\begin{itemize}
		\item[$\centerdot$] an infinite sequence of vanishing homology classes, when $n = \infty$~, or
		\item[$\centerdot$]  a finite sequence of trivial classes that ends on a nonzero class $\vartheta_{n}$~, when $n \in \mathbb{N}$~.
	\end{itemize}  Any such sequence is not unique and depends on the choice of gauge $\upsilon_k$ made at each level. 
\end{construction}

\begin{definition}[Gauge triviality sequence]\label{gauge triviality sequ}
	A \emph{gauge triviality sequence} of a Maurer--Cartan element $\phi \in \mathrm{MC}(\mathfrak{h})$ is an obstruction sequence $\left(\vartheta_k \right)_{1 \leqslant k \leqslant n}$~, for $n\in [\![1, \infty]\!]$~, obtained through Construction \ref{constru}. 
\end{definition}

\begin{lemma}\label{gauge deg} Let $\mathfrak{h}$ be a complete dg Lie algebra and let $\phi \in \mathrm{MC}(\mathfrak{h})$ be a Maurer--Cartan element. Let $\left(\vartheta_k \right)_{1 \leqslant k \leqslant n}$ be a gauge triviality sequence associated to $\phi$~, with $n\in [\![1, \infty]\!]$~. Any other gauge triviality sequence $\left(\vartheta_k' \right)_{1 \leqslant k \leqslant m}$ satisfies $m=n$~. 
\end{lemma}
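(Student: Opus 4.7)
The plan is to show that the integer $n \in [\![1,\infty]\!]$ is determined intrinsically by $\phi$, hence independent of the gauge choices $\upsilon_k$ made in Construction~\ref{constru}. Concretely, I would characterize $n$ as
\[ n(\phi) \coloneqq \sup \left\{ k \in [\![1, \infty]\!] \;\middle|\; \exists\, \omega \in \mathfrak{h}_0,\ \omega \cdot \phi \in \mathcal{F}^k \mathfrak{h} \right\}, \]
a quantity that manifestly depends only on $\phi$ and the filtration on $\mathfrak{h}$.

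To match this with the construction, I would verify two assertions. First, if a gauge triviality sequence $(\vartheta_k)_{1 \leqslant k \leqslant n}$ has length at least $k$, then iterating the gauge action and combining $\upsilon_{k-1}, \dots, \upsilon_1$ through the Baker--Campbell--Hausdorff formula produces a single $\omega \in \mathfrak{h}_0$ with $\omega \cdot \phi = \phi_k \in \mathcal{F}^k \mathfrak{h}$; so $n \leqslant n(\phi)$ in the finite case and $n(\phi) = \infty$ in the infinite case. Second, for the reverse inequality in the finite case, suppose $\vartheta_n \neq 0$. If there existed $\omega' \in \mathfrak{h}_0$ with $\omega' \cdot \phi \in \mathcal{F}^{n+1}\mathfrak{h}$, then setting $\mu \coloneqq \mathrm{BCH}(\omega', -\omega)$ yields $\mu \cdot \phi_n = \omega' \cdot \phi \in \mathcal{F}^{n+1}\mathfrak{h}$, which by Proposition~\ref{obstru} applied to $\phi_n \in \mathcal{F}^n \mathfrak{h}$ forces $\vartheta_n = 0$, a contradiction. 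Hence $n(\phi) = n$.

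Putting the two statements together, any gauge triviality sequence associated to $\phi$ has length exactly $n(\phi)$, which proves $m = n$. The only technical point is the careful use of BCH to combine successive gauges into a single one and the straightforward translation of ``gauge equivalent to an element of $\mathcal{F}^{k+1}\mathfrak{h}$'' into the hypothesis of Proposition~\ref{obstru}; no serious obstacle is expected.
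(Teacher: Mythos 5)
Your proposal is correct and runs on exactly the same two ingredients as the paper's proof: combining successive gauges into a single one via the Baker--Campbell--Hausdorff formula, and invoking the implication $(2)\Rightarrow(1)$ of Proposition~\ref{obstru} applied to $\phi_n\in\mathcal{F}^n\mathfrak{h}$ to rule out a longer competing sequence. The only difference is organizational: you compare each sequence to the intrinsic invariant $n(\phi)=\sup\{k\mid \exists\,\omega,\ \omega\cdot\phi\in\mathcal{F}^k\mathfrak{h}\}$ (which is essentially the content of Theorem~\ref{suites d'obstru}), whereas the paper compares two sequences pairwise; both are complete.
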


\begin{proof} 
We begin this proof by fixing notations. Let us denote by $\left(\upsilon_k \right)$ and $\left(\phi_k \right)$ the sequences of gauges and Maurer--Cartan elements associated to the gauge triviality sequence $\left(\vartheta_k \right)_{1 \leqslant k \leqslant n}$, by Construction \ref{constru}. We have \[\phi_{k+1} = \upsilon_{k} \cdot \phi_{k} \in \mathcal{F}^{k+1} \mathfrak{h} \ , \] for all $1 \leqslant k \leqslant n-1$, with $\phi_1 \coloneqq \phi$~. For all $1 \leqslant k \leqslant n$, we have  \[\vartheta_k = \left[ \pi_{k+1} (\phi_k)\right] \in H_{-1}\left( \mathfrak{h} / \mathcal{F}^{k+1} \mathfrak{h}  \right) \ . \] Let us set $\omega_k \coloneqq \mathrm{BCH}(\upsilon_{k-1}, \mathrm{BCH}(\cdots \mathrm{BCH}(\upsilon_{2}, \upsilon_{1})) \cdots ) \ , $ so that $\phi_k = \omega_k \cdot \phi$, for all $2 \leqslant k \leqslant n$. Suppose that there exists another gauge triviality sequence \[\left(\vartheta_k' \right)_{1 \leqslant k \leqslant m} \ . \] Let us denote by $\left(\tilde{\upsilon}_k \right)$ and  $\left(\psi_k \right)$ the associated sequence of gauges and Maurer--Cartan elements. Let us set $\tilde{\omega}_k \coloneqq \mathrm{BCH}(\tilde{\upsilon}_{k-1}, \mathrm{BCH}(\cdots \mathrm{BCH}(\tilde{\upsilon}_{2}, \tilde{\upsilon}_{1})) \cdots )  \ , $  such that $\psi_{k} = \tilde{\omega}_k \cdot \phi $ for all $2 \leqslant k \leqslant m$. By construction, we have \[\vartheta_k' = \left[ \pi_{k+1} (\psi_k)\right] \in H_{-1}\left( \mathfrak{h} / \mathcal{F}^{k+1} \mathfrak{h}  \right) \ . \] Let us suppose that $\left(\vartheta_k \right)$ is an infinite sequence of vanishing homology classes, i.e. we have $n = \infty$. Suppose that $m$ is finite, so that $\vartheta_m' \neq 0$. The gauge \[\lambda \coloneqq \mathrm{BCH}( \omega_{m+1} , -\tilde{\omega}_m) \] satisfies $\lambda \cdot \psi_m  \in \mathcal{F}^{m+1} \mathfrak{h} $~. By the implication $(2) \Rightarrow (1)$ of Proposition \ref{obstru}, applied to $\psi_m$ and $m$~, this implies that $\vartheta_m' = 0$~. This leads to a contradiction and thus $m = n = \infty$~. Let us now suppose that $n$ is a positive integer, so that $\left(\vartheta_k \right)$ is a finite sequence that ends with a non-trivial class $\vartheta_{n}$~. If $m = \infty$~, it would be inferred that $n$ is also equal to $\infty$ by the previous case. Thus, the sequence $\left(\vartheta_k' \right)$ is finite and it abuts on a non-trivial class $\vartheta_{m}'$~. Without loss of generality, we suppose that $n \leqslant m$~. If  $n < m$~, the gauge \[\lambda \coloneqq \mathrm{BCH}( \tilde{\omega}_{n+1} , - \omega_n )\] is such that $\lambda \cdot \phi_n \in \mathcal{F}^{n+1} \mathfrak{h}$~. By the implication $(2) \Rightarrow (1)$ of Proposition \ref{obstru} applied to $\phi_n$ and $n$~, the class $\vartheta_{n}$ vanishes. This leads to a contradiction and thus $m = n$~.
\end{proof}

\begin{definition}[Gauge equivalence degree]
Let $\varphi$ and $\psi$ be two Maurer--Cartan elements in $\mathfrak{g}$. Their \emph{gauge equivalence degree} is the index $n \in [\![1, \infty]\!]$ of the last class of a gauge triviality sequence of $\varphi - \psi $ in $\mathfrak{g}^{\psi}$. 
\end{definition}

\begin{theorem}\label{suites d'obstru}
Let $\varphi$ and $\psi$ be Maurer--Cartan elements in a complete dg Lie algebra $\mathfrak{g}$. 
	\begin{enumerate}
		\item[(I)] The following assertions are equivalent. 
		\begin{enumerate}
			\item[(1)] The gauge equivalence degree of  $\varphi$ and $\psi$ is equal to $\infty$~.
			\item[(2)] For all $k \geqslant 1$, there exists $\omega_k \in \mathfrak{g}_0$~, such that \[\omega_k \cdot \varphi \equiv \psi \pmod{\mathcal{F}^k \mathfrak{g}} \ . \]
		\end{enumerate}   
		\item[(II)] The following assertions are equivalent. 
		\begin{enumerate}
			\item[(3)] The gauge equivalence degree of  $\varphi$ and $\psi$ is equal to $n \in \mathbb{N}$~.
			\item[(4)]  There exists $\omega_n \in \mathfrak{g}_0$ such that $\omega_n  \cdot \varphi \equiv \psi \pmod{\mathcal{F}^{n} \mathfrak{g}}$ and, for all $\nu \in \mathfrak{g}_0$~,  \[\nu \cdot \varphi \not\equiv \psi \pmod{\mathcal{F}^{n+1} \mathfrak{g}} \ . \] 
		\end{enumerate}		 
	\end{enumerate}
\end{theorem}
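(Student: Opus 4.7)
The plan is to transfer everything to the question of gauge triviality of $\phi := \varphi - \psi$ inside the twisted dg Lie algebra $\mathfrak{h} := \mathfrak{g}^\psi$, where Construction \ref{constru}, Proposition \ref{obstru} and Lemma \ref{gauge deg} directly apply. Since the filtration $\mathcal{F}$ is compatible with the bracket (hence with the gauge action), the computation of Proposition \ref{tech}(3) upgrades verbatim modulo any $\mathcal{F}^k$, giving for every $\omega \in \mathfrak{g}_0$ and every $k \geq 1$ the equivalence
\[
\omega \cdot \varphi \equiv \psi \pmod{\mathcal{F}^k \mathfrak{g}} \iff \omega \cdot \phi \in \mathcal{F}^k \mathfrak{h}.
\]
Thus (2) and (4) translate into statements about how deep into the filtration a single gauge can push $\phi$, while (1) and (3) refer to the index $n$ of the last class in a gauge triviality sequence of $\phi$.

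For Part (I), the implication $(1) \Rightarrow (2)$ will follow directly from Construction \ref{constru}: the BCH-composite gauge $\omega_k := \mathrm{BCH}(\upsilon_{k-1}, \mathrm{BCH}(\cdots, \upsilon_1))$ satisfies $\omega_k \cdot \phi = \phi_k \in \mathcal{F}^k \mathfrak{h}$, and the translation above turns this into (2). For the converse, I plan to argue by contradiction in the spirit of Lemma \ref{gauge deg}: if the gauge equivalence degree were a finite $n$, then every gauge triviality sequence would end at a nonzero $\vartheta_n$ with an associated gauge $\omega_n$ such that $\phi_n := \omega_n \cdot \phi \in \mathcal{F}^n \mathfrak{h}$. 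A witness $\tilde \omega$ of (2) at level $n+1$ would then give $\lambda := \mathrm{BCH}(\tilde\omega, -\omega_n)$ with $\lambda \cdot \phi_n \in \mathcal{F}^{n+1}\mathfrak{h}$, so the implication $(2) \Rightarrow (1)$ of Proposition \ref{obstru} applied to the Maurer--Cartan element $\phi_n$ at level $n$ forces $\vartheta_n = 0$, a contradiction.

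Part (II) then combines the same ingredients. For $(3) \Rightarrow (4)$, Construction \ref{constru} supplies the desired $\omega_n$, and the contrapositive of the argument above forbids any $\nu$ pushing $\phi$ into $\mathcal{F}^{n+1}\mathfrak{h}$ (which would kill $\vartheta_n$ via Proposition \ref{obstru}). For $(4) \Rightarrow (3)$, the second half of (4) falsifies (2) at $k = n+1$, so by Part (I) the gauge equivalence degree $m$ is finite; comparing the depth of $\omega_n \cdot \phi$ (which lies in $\mathcal{F}^n \mathfrak{h}$) with the maximal depth $\mathcal{F}^m \mathfrak{h}$ attainable by any gauge (by the $(3) \Rightarrow (4)$ direction already established for $m$) traps $m$ between $n$ and $n$.

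The main technical care will be bookkeeping: keeping track of which differential ($d$ or $d^\psi$) is being used when invoking Proposition \ref{obstru}, verifying that the BCH-composite gauges are genuine elements of $\mathfrak{h}_0 = \mathfrak{g}_0$ via completeness, and checking that the filtration-preservation of the gauge action really does justify the modular version of Proposition \ref{tech}(3). Once these points are cleanly set up, the rest of the argument is a routine iteration of Proposition \ref{obstru} at successive filtration levels.
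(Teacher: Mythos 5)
Your proposal is correct and follows essentially the same route as the paper: both reduce to gauge triviality of $\phi=\varphi-\psi$ in $\mathfrak{g}^{\psi}$ via the mod-$\mathcal{F}^k$ version of Proposition \ref{tech}(3), use the BCH-composite gauges $\omega_k$ from Construction \ref{constru} for the direct implications, and derive the converses by applying the implication $(2)\Rightarrow(1)$ of Proposition \ref{obstru} to $\phi_n$ to force $\vartheta_n=0$ and reach a contradiction. Your phrasing of $(4)\Rightarrow(3)$ as a squeeze via the already-established $(3)\Rightarrow(4)$ is a cosmetic repackaging of the paper's two direct invocations of Proposition \ref{obstru}, not a different argument.
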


\begin{proof} We begin this proof by fixing the notations. Let $\left(\vartheta_k \right)_{1 \leqslant k \leqslant n}$ be a gauge triviality sequence of $\phi\coloneqq \varphi- \psi$ and let us denote by $\left(\upsilon_k \right)$ and $\left(\phi_k \right)$ the associated sequence of gauges and Maurer--Cartan elements given by Construction \ref{constru}. Let us set \[\omega_k \coloneqq \mathrm{BCH}(\upsilon_{k-1}, \mathrm{BCH}(\cdots \mathrm{BCH}(\upsilon_{2}, \upsilon_{1})) \cdots ) \ , \] for all $2 \leqslant k \leqslant n$~, so that, $\phi_k = \omega_k \cdot \phi$~. Let us prove Point (I).  If $n = \infty$~, we have \[\omega_k \cdot \varphi \equiv \psi \pmod{\mathcal{F}^k \mathfrak{g}} \ , \] for all $k \geq 1$, since $\phi_k = \omega_k \cdot \phi \in \mathcal{F}^k \mathfrak{g}$. Conversely, suppose that $n \in \mathbb{N}$ is finite so that $\vartheta_n \neq 0$. Let $\lambda \in \mathfrak{g}_0$ be such that \[\lambda \cdot \varphi \equiv \psi \pmod{\mathcal{F}^{n+1} \mathfrak{g}} \ . \] The gauge $\nu \coloneqq \mathrm{BCH}( \lambda , - \omega_n)$ satisfies $\nu \cdot \phi_n  \in \mathcal{F}^{n+1} \mathfrak{h} \ .$ By the implication $(2) \Rightarrow (1)$ of Proposition \ref{obstru}, applied to $\phi_n$ and $n$~, this implies that $\vartheta_n = 0$ and leads to a contradiction.

	\noindent Let us prove $(3) \Rightarrow (4)$. If $n$ is finite, then $\vartheta_{n-1} = 0$ and $\vartheta_{n} \neq 0$~. By construction, we have \[\omega_{n} \cdot \varphi \equiv \psi \pmod{\mathcal{F}^{n} \mathfrak{g}} \ .\] However, there is no $\nu \in \mathfrak{g}_0$ such that $\nu \cdot \varphi \equiv \psi \pmod{\mathcal{F}^{n+1} \mathfrak{g}}$~. Otherwise, we would have \[\mathrm{BCH}( \nu , - \omega_n) \cdot \phi_n \in \mathcal{F}^{n+1} \mathfrak{h} \ ,\] which implies $\vartheta_n = 0$, by the implication $(2) \Rightarrow (1)$ of Proposition \ref{obstru}, applied to $\phi_n$ and $n$~. Conversely, let us prove $(4) \Rightarrow (3)$. We suppose that there exists $m \in \mathbb{N}$ such that 
	\begin{itemize}
		\item[$\centerdot$] there exists $\lambda \in \mathfrak{g}_0$ such that $\lambda \cdot \varphi \equiv \psi \pmod{\mathcal{F}^{m+1} \mathfrak{g}}$~,
		\item[$\centerdot$] for all $\nu \in \mathfrak{g}_0$~, we have $\nu \cdot \varphi \not\equiv \psi \pmod{\mathcal{F}^{m+1} \mathfrak{g}} \ . $ 
	\end{itemize}
	Let us prove that $m$ indices the last class of the gauge triviality sequence. If $n < m$~, we have \[\mathrm{BCH}( \lambda , - \omega_n) \cdot \phi_n \in \mathcal{F}^{n+1} \mathfrak{h} \ . \] This implies that $\vartheta_n = 0$~, by the implication $(2) \Rightarrow (1)$ of Proposition \ref{obstru}, applied to $\phi_n$ and $n$~. This is a contradiction and thus $m \leqslant n$~. 
	If $m < n$~, then $\vartheta_{m} = 0$ and there exists $\nu \in \mathfrak{h}_0$~, such that $\nu \cdot \phi \in \mathcal{F}^{m+1} \mathfrak{h} \ , $ by the implication $(1) \Rightarrow (2)$ of Proposition \ref{obstru}, applied to $\phi$ and $m$~, which contradicts the second hypothesis.
\end{proof}

\begin{remark}
Under the hypotheses of Theorem \ref{suites d'obstru}, if the gauge equivalence degree is equal to $\infty$, one would expect $\varphi$ and $\psi$ to be gauge equivalent. However, this might not be the case. A good candidate for such a gauge is the infinite composite \begin{equation}\label{composition}
\cdots \mathrm{BCH}(\upsilon_{4}, \mathrm{BCH}(\upsilon_{3}, \mathrm{BCH}(\upsilon_{2}, \upsilon_{1}))) \cdots ) \ , 
\end{equation}  of the gauges associated to a gauge triviality sequence but nothing asserts that this is well defined. In the following sections, we highlight sufficient conditions on the complete dg Lie algebra ensuring the gauge equivalence between $\varphi$ and $\psi$.
\end{remark}

\subsection{The bounded case}\label{bound}
A complete dg Lie algebra $\mathfrak{g}$ is \emph{bounded in degree $-1$} if  there exists $\eta \geqslant 1$ such
that $\mathcal{F}^{\eta} \mathfrak{g}_{-1} = 0 $. In this context, two Maurer--Cartan elements that are gauge equivalent modulo $\mathcal{F}^{\eta} \mathfrak{g}$ are gauge equivalent. This leads to the following improvement of Theorem \ref{suites d'obstru}.

\begin{theorem}[The bounded case]\label{suites d'obstructions3}
	Let $\varphi$ and $\psi$ be two Maurer--Cartan elements in a  dg Lie algebra $\mathfrak{g}$ which is bounded in degree $-1$.  The following assertions are equivalent. 
	\begin{enumerate}
		\item[(1)] The gauge equivalence degree of $\varphi$ and $\psi$ is equal to $\infty$~.
		\item[(2)] The Maurer-Cartan elements $\varphi$ and $\psi$ are gauge equivalent. 
	\end{enumerate}   
\end{theorem}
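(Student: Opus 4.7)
The implication $(2) \Rightarrow (1)$ is immediate from Theorem \ref{suites d'obstru}(I): if a single gauge $\omega \in \mathfrak{g}_0$ satisfies $\omega \cdot \varphi = \psi$ in $\mathfrak{g}$, then trivially $\omega \cdot \varphi \equiv \psi \pmod{\mathcal{F}^k \mathfrak{g}}$ for every $k \geq 1$, so the gauge equivalence degree is $\infty$.

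For $(1) \Rightarrow (2)$, the plan is to apply Theorem \ref{suites d'obstru}(I) at the specific index $k = \eta$. This produces an element $\omega_\eta \in \mathfrak{g}_0$ such that
\[
\omega_\eta \cdot \varphi - \psi \;\in\; \mathcal{F}^{\eta} \mathfrak{g}.
\]
Both $\omega_\eta \cdot \varphi$ and $\psi$ are Maurer--Cartan elements, hence concentrated in homological degree $-1$; the same is therefore true of their difference. The boundedness hypothesis $\mathcal{F}^{\eta} \mathfrak{g}_{-1} = 0$ then forces $\omega_\eta \cdot \varphi - \psi = 0$, so $\omega_\eta \cdot \varphi = \psi$ and $\varphi$ and $\psi$ are gauge equivalent.

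The argument is short and presents no serious obstacle. The boundedness hypothesis precisely obviates the convergence issue flagged in the remark following Theorem \ref{suites d'obstru}: instead of attempting to make sense of the infinite $\mathrm{BCH}$-composite of Equation (\ref{composition}), a single truncation at the degree-$(-1)$ vanishing level $\eta$ already does the job. Note that the filtration is assumed trivial only in degree $-1$, not in all degrees, which is essential as the gauge $\omega_\eta$ itself, living in $\mathfrak{g}_0$, may have nonzero components at arbitrarily deep filtration levels.
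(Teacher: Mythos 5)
Your proof is correct and follows essentially the same route as the paper: the paper's own argument also reduces to producing a single gauge at filtration level $\eta$ (there written explicitly as the finite composite $\omega_{\eta} = \mathrm{BCH}(\upsilon_{\eta}, \dots, \upsilon_1)$ from Construction \ref{constru}) and then using $\mathcal{F}^{\eta}\mathfrak{g}_{-1} = 0$ to conclude that the residual Maurer--Cartan element vanishes, with the converse handled by Theorem \ref{suites d'obstru} exactly as you do. Your invocation of Theorem \ref{suites d'obstru}(I) at $k = \eta$ as a black box, rather than unwinding the construction, is a purely cosmetic difference.
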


\begin{proof} 
	Let $\left(\vartheta_k \right)_{1 \leqslant k \leqslant n}$ be a gauge triviality sequence of $\phi \coloneqq \varphi- \psi$ and let us denote by $\left(\upsilon_k \right)$ and $\left(\phi_k \right)$ the associated sequence of gauges and Maurer--Cartan elements given by Construction \ref{constru}.  If $n = \infty$~, it follows from the construction that the gauge  \[\omega_{\eta} \coloneqq \mathrm{BCH}(\upsilon_{\eta}, \mathrm{BCH}(\cdots \mathrm{BCH}(\upsilon_{2}, \upsilon_{1})) \cdots ) \ , \] satisfies $\omega_{\eta} \cdot \varphi = \psi$~. Conversely, suppose that there exists $\lambda$ such that $\lambda \cdot \varphi = \psi$. By the implication (2) $ \Rightarrow$ (1) of Theorem \ref{suites d'obstru}, the gauge equivalence degree is equal to $\infty$. 
\end{proof}

\begin{corollary}
Let $\mathfrak{g}$ be a bounded dg Lie algebra. If there exists $\psi \in \mathrm{MC}(\mathfrak{g})$ such that \[H_{-1}(\mathfrak{g}^{\psi}) = 0 \] then the gauge group action is transitive on $\mathfrak{g}$.
\end{corollary}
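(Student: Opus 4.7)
The plan is to fix an arbitrary $\varphi \in \mathrm{MC}(\mathfrak{g})$ and show that $\varphi$ and $\psi$ are gauge equivalent; since $\varphi$ is arbitrary, this gives the transitivity of the gauge action on $\mathrm{MC}(\mathfrak{g})$. By Theorem~\ref{suites d'obstructions3}, it suffices to prove that the gauge equivalence degree of $\varphi$ and $\psi$ equals $\infty$. Working in the twisted complete dg Lie algebra $\mathfrak{g}^{\psi}$, I set $\phi := \varphi - \psi \in \mathrm{MC}(\mathfrak{g}^{\psi})$ and, following Construction~\ref{constru}, build a gauge triviality sequence of $\phi$ in which every obstruction class $\vartheta_k \in H_{-1}(\mathfrak{g}^{\psi}/\mathcal{F}^{k+1}\mathfrak{g})$ vanishes.

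I would proceed by induction on the filtration index $k$. Let $\eta$ be the integer from the bounded hypothesis, so $\mathcal{F}^{\eta}\mathfrak{g}_{-1} = 0$. For $k \geqslant \eta$, the current representative $\phi_k$ lies in $\mathcal{F}^k \mathfrak{g}_{-1} = 0$ and the sequence continues trivially with $\vartheta_k = 0$. The content of the argument is therefore concentrated in the range $1 \leqslant k < \eta$. Here, the Maurer--Cartan equation satisfied by $\phi_k$ in $\mathfrak{g}^{\psi}$ gives
\[ d^{\psi}\phi_k \;=\; -\tfrac{1}{2}[\phi_k,\phi_k] \;\in\; \mathcal{F}^{2k}\mathfrak{g}_{-2} \;\subseteq\; \mathcal{F}^{k+1}\mathfrak{g}_{-2}, \]
so $\phi_k$ is already a $d^{\psi}$-cycle modulo $\mathcal{F}^{k+1}\mathfrak{g}$. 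The strategy is to lift $\phi_k$ to a genuine $d^{\psi}$-cycle $\phi_k + \zeta \in Z_{-1}(\mathfrak{g}^{\psi})$, with $\zeta \in \mathcal{F}^{k+1}\mathfrak{g}_{-1}$, by iteratively solving $d^{\psi}\zeta = \tfrac{1}{2}[\phi_k,\phi_k]$; the bounded filtration in degree $-1$ ensures the lifting process terminates in finitely many stages. Once lifted, the hypothesis $H_{-1}(\mathfrak{g}^{\psi}) = 0$ furnishes $\upsilon \in \mathfrak{g}_0$ with $d^{\psi}\upsilon = \phi_k + \zeta$, and reducing modulo $\mathcal{F}^{k+1}\mathfrak{g}$ yields $\vartheta_k = [\pi_{k+1}(d^{\psi}\upsilon)] = 0$, together with an explicit gauge $\upsilon_k$ advancing the construction to the next stage. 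The conclusion then follows directly from Theorem~\ref{suites d'obstructions3}.

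The main obstacle is precisely the lifting step. The Maurer--Cartan relation provides the primitive $-2\phi_k$ for $[\phi_k,\phi_k]$ in $\mathfrak{g}^{\psi}$, but this element lies only in $\mathcal{F}^k$, one filtration level too high. Producing a primitive $\zeta$ inside $\mathcal{F}^{k+1}$ requires a secondary obstruction analysis, whose obstructions lie a priori in $H_{-2}$ of suitable subcomplexes. I would overcome this by a nested induction on the filtration, successively improving approximate primitives while exploiting both the bounded condition on $\mathfrak{g}_{-1}$ (which eventually kills the remaining defect) and the hypothesis $H_{-1}(\mathfrak{g}^{\psi}) = 0$ (which produces the required correcting terms degree by degree). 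This interplay between the boundedness in degree $-1$ and the homological vanishing is the technical heart of the proof; the remainder is a routine application of the machinery set up in Sections~\ref{1.1} and~\ref{1.2}.
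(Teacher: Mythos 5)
Your proposal follows the same skeleton as the paper's (very short) proof: fix $\varphi$, pass to $\phi=\varphi-\psi$ in $\mathfrak{g}^{\psi}$, argue that every class of a gauge triviality sequence vanishes so that the gauge equivalence degree is $\infty$, and conclude by Theorem \ref{suites d'obstructions3}. The paper simply asserts the vanishing, reading the classes as if they lived in $H_{-1}(\mathfrak{g}^{\psi})$; you correctly observe that Construction \ref{constru} places $\vartheta_k=[\pi_{k+1}(\phi_k)]$ in $H_{-1}(\mathfrak{g}^{\psi}/\mathcal{F}^{k+1}\mathfrak{g}^{\psi})$, that $\phi_k$ is only a $d^{\psi}$-cycle modulo $\mathcal{F}^{k+1}$, and that applying the hypothesis $H_{-1}(\mathfrak{g}^{\psi})=0$ therefore requires first lifting $\phi_k$ to an honest cycle. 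You have located exactly where the content of the argument sits.

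The gap is that this lifting step is neither carried out nor carriable out with the tools you invoke. The obstruction to finding $\zeta\in\mathcal{F}^{k+1}\mathfrak{g}_{-1}$ with $d^{\psi}\zeta=\tfrac{1}{2}[\phi_k,\phi_k]$ is the class of $\tfrac{1}{2}[\phi_k,\phi_k]$ in $H_{-2}\bigl(\mathcal{F}^{k+1}\mathfrak{g}^{\psi}\bigr)$; the hypothesis $H_{-1}(\mathfrak{g}^{\psi})=0$ produces degree-$0$ primitives for degree-$(-1)$ cycles and says nothing about degree-$(-2)$ cycles, while the boundedness $\mathcal{F}^{\eta}\mathfrak{g}_{-1}=0$ works \emph{against} you: it shrinks, and at $k=\eta-1$ annihilates, the space in which $\zeta$ must be found, without constraining $[\phi_k,\phi_k]\in\mathfrak{g}_{-2}$. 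The failure is not an artifact of your bookkeeping. Take $\mathfrak{h}$ with $\mathfrak{h}_0=0$, $\mathfrak{h}_{-1}=Rx$, $\mathfrak{h}_{-2}=Ry$, $dx=y$, $[x,x]=-2y$, and $\mathcal{F}^{2}\mathfrak{h}=Ry$, $\mathcal{F}^{3}\mathfrak{h}=0$: this is a complete bounded dg Lie algebra with $H_{-1}(\mathfrak{h})=0$, yet $x$ is a nonzero Maurer--Cartan element, the gauge group is trivial, and $\vartheta_1=[x]\neq 0$ in $H_{-1}(\mathfrak{h}/\mathcal{F}^{2}\mathfrak{h})=Rx$. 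So the vanishing of the obstruction classes genuinely requires control of $H_{-1}$ of the quotients $\mathfrak{g}^{\psi}/\mathcal{F}^{k+1}\mathfrak{g}^{\psi}$ (equivalently of the associated graded in degree $-1$), not merely of $H_{-1}(\mathfrak{g}^{\psi})$, and your nested induction cannot manufacture it. To be fair to you, your attempt breaks down precisely at the point that the paper's own one-line proof passes over in silence.
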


\begin{proof}
For any Maurer--Cartan element $\varphi \in \mathrm{MC}(\mathfrak{g})$, the gauge triviality sequence associated to $\phi \coloneqq \varphi - \psi$ in $H_{-1}(\mathfrak{g}^{\psi})$ is an infinite sequence of vanishing homology classes. The gauge equivalence is then equal to $\infty$. By Theorem \ref{suites d'obstructions3}, the Maurer-Cartan element $\varphi$ and $\psi$ are gauge equivalent and the gauge group action has exactly one orbit.
\end{proof}

\subsection{The weight-graded case}\label{formality}

When the complete filtration of $\mathfrak{g}$ comes from an additional weight grading, the constructions of Section \ref{1.2} can be refined in order to faithfully detect gauge equivalences. Let us fix an integer $\delta \geqslant 0$. 

\begin{definition}[$\delta$-weight-graded dg Lie algebra]\label{delta wg}
	A \emph{$\delta$-weight-graded dg Lie algebra} is a complete dg Lie algebra $\left(\mathfrak{g}, [-,-] , d , \mathcal{F} \right)$ with an additional weight grading such that \[\mathfrak{g} \cong \prod_{k \geqslant 1} \mathfrak{g}^{(k)},  \quad \left[ \mathfrak{g}^{(k)},  \mathfrak{g}^{(l)} \right] \subset  \mathfrak{g}^{(k +l)} , \quad \mathcal{F}^n \mathfrak{g} \coloneqq  \prod_{k \geqslant n} \mathfrak{g}^{(k)}  \ ,  \] and a differential satisfying $d = d_0 + d_{\delta}$, \[d_0 \left(\mathfrak{g}^{(k)} \right) \subset \mathfrak{g}^{(k)}\quad \mbox{and} \quad d_{\delta} \left(\mathfrak{g}^{(k)} \right) \subset \mathfrak{g}^{(k + \delta)} \ .\]  Every element $\varphi$ in $\mathfrak{g}$ decomposes as $\varphi = \varphi^{(1)} + \varphi^{(2)}  + \cdots $~, where $\varphi^{(k)} \in \mathfrak{g}^{(k)}$~, for all $k \geqslant 1$~.
\end{definition}

\noindent In this context, one gets the following refinement of Proposition \ref{obstru}.

\begin{proposition}\label{obstructions}
Let $\mathfrak{h}$ be a $\delta$-weight-graded dg Lie algebra and let $n > \delta$ be an integer. Let $\phi \in \mathrm{MC}(\mathfrak{h})$ be such that $\phi \in \mathcal{F}^n \mathfrak{h}$~. Let us consider \[\vartheta_{n} = \left[\phi^{(n)} \right] \in H_{-1}\left( \mathfrak{h} / \mathcal{F}^{n+1}\mathfrak{h}  \right) \ .\] The following assertions are equivalent.
	\begin{enumerate}
		\item The homology class $\vartheta_{n}$ vanishes. 
		\item There exists $\upsilon \in \mathfrak{h}_0$ under the form $\upsilon =  \upsilon^{(n- \delta)} + \upsilon^{(n)} $ such that  $\upsilon \cdot \phi \in \mathcal{F}^{n+1} \mathfrak{h}$~. 		
	\end{enumerate}
\end{proposition}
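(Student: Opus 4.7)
The plan has two directions. For $(2) \Rightarrow (1)$, I will apply Proposition~\ref{obstru}: any $\upsilon$ of the prescribed form is a fortiori an element of $\mathfrak{h}_0$ with $\upsilon \cdot \phi \in \mathcal{F}^{n+1}\mathfrak{h}$, so the equivalence in Proposition~\ref{obstru} yields $\vartheta_n = 0$.

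For $(1) \Rightarrow (2)$, my strategy is to refine the gauge-action analysis underlying Proposition~\ref{obstru} by tracking weights. For a candidate $\upsilon = \upsilon^{(n-\delta)} + \upsilon^{(n)} \in \mathcal{F}^{n-\delta}\mathfrak{h}$ and $\phi \in \mathcal{F}^n\mathfrak{h}$, the hypothesis $n > \delta$ pushes every iterated bracket of the BCH expansion of $\upsilon \cdot \phi$ into weights strictly beyond $n$, with the sole exception of $[\upsilon^{(n-\delta)}, d_0\, \upsilon^{(n-\delta)}]$ in the boundary case $n = 2\delta$. I will then match weights in
\[ \upsilon \cdot \phi \equiv \phi - d\upsilon \pmod{\mathcal{F}^{n+1}\mathfrak{h}} \ : \]
the weight-$(n+\delta)$ piece $d_\delta\, \upsilon^{(n)}$ of $d\upsilon$ sits in $\mathcal{F}^{n+1}\mathfrak{h}$ automatically, while the weight-$(n-\delta)$ and weight-$n$ pieces show that $\upsilon \cdot \phi \in \mathcal{F}^{n+1}\mathfrak{h}$ is equivalent to the two equations
\[ d_0\, \upsilon^{(n-\delta)} = 0, \qquad d_0\, \upsilon^{(n)} + d_\delta\, \upsilon^{(n-\delta)} = \phi^{(n)} \ , \]
the first equation also killing the exceptional bracket above.

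To solve this system from hypothesis (1), I will first observe that $\phi^{(n)}$ is $d_0$-closed: the weight-$n$ part of $d\phi + \tfrac{1}{2}[\phi,\phi] = 0$ collapses to $d_0 \phi^{(n)} = 0$, because $\phi \in \mathcal{F}^n$ forces $\phi^{(n-\delta)} = 0$ and the bracket $[\phi,\phi]$ lies in $\mathcal{F}^{2n}\mathfrak{h}$. Next, the implication $(1) \Rightarrow (2)$ of Proposition~\ref{obstru} produces a witness $\tilde\upsilon = \sum_k \tilde\upsilon^{(k)} \in (\mathfrak{h}/\mathcal{F}^{n+1}\mathfrak{h})_0$ with $d\tilde\upsilon \equiv \phi^{(n)}$, and the weight-$n$ component of this identity supplies the second equation above with $\upsilon^{(n-\delta)} := \tilde\upsilon^{(n-\delta)}$ and $\upsilon^{(n)} := \tilde\upsilon^{(n)}$.

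The main obstacle will be enforcing $d_0\, \upsilon^{(n-\delta)} = 0$: the weight-$(n-\delta)$ component of $d\tilde\upsilon \equiv \phi^{(n)}$ only yields $d_0 \tilde\upsilon^{(n-\delta)} = -d_\delta\, \tilde\upsilon^{(n-2\delta)}$, which vanishes automatically when $n \leq 2\delta$ (since then $\tilde\upsilon^{(n-2\delta)} = 0$) but is nontrivial in general. My plan to close the gap is to iteratively modify $\tilde\upsilon$ by coboundaries, raising the lowest weight appearing in its decomposition until only the pieces in weights $n-\delta$ and $n$ survive, using the $d_0$-closedness of $\phi^{(n)}$ and the recursive identities $d_0\, \tilde\upsilon^{(j)} + d_\delta\, \tilde\upsilon^{(j-\delta)} = 0$ that the lower-weight components satisfy for $j < n$.
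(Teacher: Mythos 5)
Your direction $(2)\Rightarrow(1)$ is exactly the paper's, and your setup for $(1)\Rightarrow(2)$ also follows the paper's route: invoke Proposition~\ref{obstru} to produce a witness $\tilde{\upsilon}$ with $d\tilde{\upsilon}\equiv\phi^{(n)} \pmod{\mathcal{F}^{n+1}\mathfrak{h}}$, read off weight components, and truncate. You are in fact more careful than the paper here. The reduction of assertion (2) to the pair of equations $d_0\upsilon^{(n-\delta)}=0$ and $d_0\upsilon^{(n)}+d_\delta\upsilon^{(n-\delta)}=\phi^{(n)}$ is correct (one small slip: the exceptional bracket $[\upsilon^{(n-\delta)},d_0\upsilon^{(n-\delta)}]$ interferes for every $n\leqslant 2\delta$, not only for $n=2\delta$; this is harmless since the first equation kills it), and you correctly isolate that the weight-$n$ component of $d\tilde{\upsilon}\equiv\phi^{(n)}$ only delivers the second equation. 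The paper's proof passes over this point silently: it establishes $(d\upsilon)^{(k)}=0$ for $k<n$ and $(d\upsilon)^{(n)}=\phi^{(n)}$ and then asserts that the truncation $\upsilon^{(n-\delta)}+\upsilon^{(n)}$ works, but the relation $(d\upsilon)^{(n-\delta)}=0$ only yields $d_0\upsilon^{(n-\delta)}=-d_\delta\upsilon^{(n-2\delta)}$, which is precisely the difficulty you flag.

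The gap is your final step. Modifying $\tilde{\upsilon}$ so as to raise its lowest weight $m$ requires, at each stage, solving $d_0c=d_\delta\tilde{\upsilon}^{(m)}$ in weight $m+\delta$: the right-hand side is $d_0$-closed (since $d_0d_\delta=-d_\delta d_0$ and $d_0\tilde{\upsilon}^{(m)}=0$) but need not be $d_0$-exact, and even when it is, the correction injects a new term $d_\delta c$ one step higher, so the procedure can terminate at weight $n$ with a residue of $\phi^{(n)}$ that is not $d_0$-exact. This is not merely a presentational issue. In the abelian $1$-weight-graded algebra with $\mathfrak{h}^{(1)}_0=Ra_2$, $\mathfrak{h}^{(2)}_0=Ra_1$, $\mathfrak{h}^{(2)}_{-1}=Ry$, $\mathfrak{h}^{(3)}_0=Ra_0$, $\mathfrak{h}^{(3)}_{-1}=Rw\oplus Rz$ and $d_1a_2=y$, $d_0a_1=-y$, $d_1a_1=z$, $d_0a_0=w$, the element $\phi=w+z$ is a Maurer--Cartan element lying in $\mathcal{F}^3\mathfrak{h}$ with $\vartheta_3=[w+z]=[d(a_0+a_1+a_2)]=0$, yet no $\upsilon=\upsilon^{(2)}+\upsilon^{(3)}$ satisfies $d\upsilon=\phi$. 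So the implication $(1)\Rightarrow(2)$ cannot be closed at the level of generality of Definition~\ref{delta wg}; it does hold, and both your argument and the paper's become immediate, under the additional hypothesis $d_0=0$ (then $d_0\upsilon^{(n-\delta)}=0$ is vacuous and $\upsilon^{(n-\delta)}:=\tilde{\upsilon}^{(n-\delta)}$ already works), which is the situation in all of the paper's applications, where the convolution algebra is built on $H(A)$ with zero internal differential. You should either assume $d_0=0$ or supply an argument using structure not visible in the stated hypotheses.
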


\begin{proof} 	The implication $(2) \Rightarrow  (1)$ holds, by Proposition \ref{obstru}. Conversely, if $\vartheta_{n} = 0$, there exists $\upsilon \in \mathfrak{h}_0$ such that $\upsilon \cdot \phi  \in \mathcal{F}^{n+1} \mathfrak{h}$~, by Proposition \ref{obstru}. By definition, we have \[\upsilon \cdot \phi \equiv \phi^{(n)} - \sum_{l \geqslant 1} \tfrac{1}{l!} \mathrm{ad}_{\upsilon}^{l-1} \left( d \upsilon   \right) \pmod{\mathcal{F}^{n+1} \mathfrak{g}} \ .    \] For $n \geqslant \delta + 1$, one can prove by induction on $k$ that for all $1 \leq k < n$, \[d \left(\upsilon\right)^{(k)} = 0  \quad \mbox{and} \quad d \left(\upsilon\right)^{(n)} =  \phi^{(n)} \ . \] Thus, the gauge $ \upsilon^{(n - \delta)} + \upsilon^{(n)}$ satisfies the desired property.
\end{proof}

\begin{theorem}[The weight-graded case]\label{suites d'obstructions}
Let $\varphi$ and $\psi$ be two Maurer--Cartan elements in a $\delta$-weight-graded dg Lie algebra $\mathfrak{g}$. Suppose that $\psi \in \mathfrak{g}^{(\delta)}$ so that $\mathfrak{h} \coloneqq \mathfrak{g}^{\psi}$ is also a $\delta$-weight-graded dg Lie algebra. Suppose that $\phi \coloneqq \varphi - \psi \in \mathrm{MC}(\mathfrak{h})$ is in $\mathcal{F}^{\delta +1} \mathfrak{h}$. The following assertions are equivalent. 
	\begin{enumerate}
		\item[(1)] The gauge equivalence degree of $\varphi$ and $\psi$ is equal to $\infty$~.
		\item[(2)] The Maurer-Cartan elements $\varphi$ and $\psi$ are gauge equivalent. 
	\end{enumerate}   
\end{theorem}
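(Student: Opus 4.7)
The implication $(2) \Rightarrow (1)$ is immediate from Theorem \ref{suites d'obstru}. For the converse, the plan is to leverage the weight-graded refinement of the obstruction construction provided by Proposition \ref{obstructions} to produce gauges living in increasingly deep pieces of the filtration, which can then be assembled into an infinite composition converging in the complete topology of $\mathfrak{h}$.

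First, I would build a gauge triviality sequence for $\phi \coloneqq \varphi - \psi \in \mathrm{MC}(\mathfrak{h})$. Since $\phi \in \mathcal{F}^{\delta+1}\mathfrak{h}$, the classes $\vartheta_k$ for $k \leqslant \delta$ vanish trivially, so one can set $\upsilon_k \coloneqq 0$ and obtain $\phi_k = \phi$ for all $k \leqslant \delta + 1$. For $k \geqslant \delta + 1$, the hypothesis that the gauge equivalence degree equals $\infty$, combined with Proposition \ref{obstructions} applied at weight $n = k$, produces a gauge of the special form
\[ \upsilon_k = \upsilon_k^{(k-\delta)} + \upsilon_k^{(k)} \in \mathcal{F}^{k-\delta}\mathfrak{h} \]
such that $\phi_{k+1} \coloneqq \upsilon_k \cdot \phi_k \in \mathcal{F}^{k+1}\mathfrak{h}$.

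Next, set $\omega_k \coloneqq \mathrm{BCH}(\upsilon_{k-1}, \mathrm{BCH}(\cdots \mathrm{BCH}(\upsilon_2, \upsilon_1) \cdots))$, so that $\omega_k \cdot \phi = \phi_k \in \mathcal{F}^k \mathfrak{h}$. The core step is to show that $(\omega_k)_k$ is Cauchy in the filtration topology. Writing $\omega_{k+1} = \mathrm{BCH}(\upsilon_k, \omega_k)$, every term of the BCH series other than $\omega_k$ itself contains at least one factor $\upsilon_k \in \mathcal{F}^{k-\delta}\mathfrak{h}$, and hence lies in $\mathcal{F}^{k-\delta}\mathfrak{h}$ by compatibility of the bracket with the filtration. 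Therefore $\omega_{k+1} - \omega_k \in \mathcal{F}^{k-\delta}\mathfrak{h}$, so completeness of $\mathfrak{h}$ yields a limit $\omega_\infty \in \mathfrak{h}_0$. Continuity of the gauge action in its first argument then gives $\omega_\infty \cdot \phi = \lim_k \omega_k \cdot \phi = \lim_k \phi_k = 0$, which by Proposition \ref{tech}(3) translates into $\omega_\infty \cdot \varphi = \psi$ in $\mathfrak{g}$.

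The main obstacle is precisely the convergence of this infinite BCH composition, which, as observed in the remark following Theorem \ref{suites d'obstru}, may fail in the purely filtered setting. The argument succeeds here because Proposition \ref{obstructions} forces each correction gauge $\upsilon_k$ to have weight components only in degrees $k - \delta$ and $k$, placing it in a filtration piece whose depth grows with $k$; without this refinement, the BCH formula could produce unboundedly many contributions to each fixed weight.
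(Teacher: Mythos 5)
Your proof is correct and follows essentially the same route as the paper: both reduce to a gauge triviality sequence for $\phi$, invoke Proposition \ref{obstructions} to force $\upsilon_k \in \mathfrak{h}^{(k-\delta)} \oplus \mathfrak{h}^{(k)}$, and conclude that the infinite BCH composite converges. Your explicit Cauchy estimate $\omega_{k+1} - \omega_k \in \mathcal{F}^{k-\delta}\mathfrak{h}$ merely spells out the convergence step that the paper leaves implicit in the phrase "it follows from the construction that $\omega$ is well defined."
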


\begin{proof} 
The implication (2) $ \Rightarrow$ (1) follows from Theorem \ref{suites d'obstru}. Conversely, suppose that the gauge equivalence degree is equal to $\infty$. Let $\left(\vartheta_k \right)$ be a gauge triviality sequence of $\phi$ and let us denote by $\left(\upsilon_k \right)$ and $\left(\phi_k \right)$ the associated sequence of gauges and Maurer--Cartan elements given by Construction \ref{constru}. Since $\phi \in \mathcal{F}^{\delta +1} \mathfrak{h}$, we can assume that $\phi_{k + 1}  = \phi$ and that $\upsilon_k$ is the unit element for all $1 \leqslant k \leqslant \delta$. By Proposition \ref{obstructions}, we can further assume that $\upsilon_{k} \in  \mathfrak{h}^{(k - \delta)} \oplus \mathfrak{h}^{(k)}$, for all $k \geqslant \delta +1$. If $n = \infty$, it follows from the construction that   \[\omega\coloneqq \mathrm{BCH}(\cdots \mathrm{BCH}(\upsilon_{\delta +3}, \mathrm{BCH}(\upsilon_{\delta +2}, \upsilon_{\delta +1})) \cdots ) \ , \] is well defined and satisfies $\omega\cdot \varphi = \psi$~.  
\end{proof}

\begin{corollary}
	Let $\mathfrak{g}$ be a $\delta$-weight-graded dg Lie algebra. Let $\psi \in \mathrm{MC}\left(\mathfrak{g}^{(\delta)}\right)$ be such that \[\mathcal{F}^{\delta +1} H_{-1}(\mathfrak{g}^{\psi}) = 0 \ .\] Then, any Maurer-Cartan element $\varphi$ such that $\varphi - \psi \in \mathcal{F}^{\delta +1} \mathfrak{g}$ is gauge equivalent to $\psi$. 
\end{corollary}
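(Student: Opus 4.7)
The plan is to reduce to Theorem \ref{suites d'obstructions}. Set $\mathfrak{h} \coloneqq \mathfrak{g}^\psi$ and $\phi \coloneqq \varphi - \psi$; since $\psi \in \mathfrak{g}^{(\delta)}$, the twisted dg Lie algebra $\mathfrak{h}$ is again $\delta$-weight-graded, and $\phi \in \mathcal{F}^{\delta+1}\mathfrak{h} \cap \mathrm{MC}(\mathfrak{h})$. It therefore suffices to prove that the gauge equivalence degree of $\varphi$ and $\psi$ is equal to $\infty$, as Theorem \ref{suites d'obstructions} will then provide the desired gauge equivalence.

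To this end, I construct a gauge triviality sequence $(\vartheta_k)$ of $\phi$ via Construction \ref{constru}, refined by the weight-graded version of Proposition \ref{obstructions}, and verify that every class is zero. For $1 \leqslant k \leqslant \delta$, the class $\vartheta_k$ vanishes trivially because $\pi_{k+1}(\phi) = 0$; one takes $\upsilon_k = 0$ and $\phi_{k+1} = \phi$. Proceeding by induction, assume that for some $k \geqslant \delta+1$ one has produced $\phi_k \in \mathcal{F}^k\mathfrak{h} \cap \mathrm{MC}(\mathfrak{h})$. The goal is to show that $\vartheta_k = [\phi_k^{(k)}] \in H_{-1}(\mathfrak{h}/\mathcal{F}^{k+1}\mathfrak{h})$ vanishes; Proposition \ref{obstructions} will then supply a gauge $\upsilon_k \in \mathfrak{h}^{(k-\delta)} \oplus \mathfrak{h}^{(k)}$ such that $\phi_{k+1} \coloneqq \upsilon_k \cdot \phi_k \in \mathcal{F}^{k+1}\mathfrak{h}$, allowing the induction to continue.

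The heart of the proof is thus the vanishing of $\vartheta_k$ for $k \geqslant \delta+1$. The Maurer--Cartan equation for $\phi_k$ reads $d^\psi(\phi_k) = -\tfrac{1}{2}[\phi_k,\phi_k] \in \mathcal{F}^{2k}\mathfrak{h}$, so $\phi_k$ is already a $d^\psi$-cycle modulo $\mathcal{F}^{2k}\mathfrak{h}$. Exploiting the weight decomposition $d = d_0 + d_\delta$ together with completeness of the filtration, I iteratively correct $\phi_k^{(k)}$ in strictly increasing weights $k + \delta, k+2\delta, \dots$ to produce a genuine $d^\psi$-cycle $\widetilde{\phi}_k \in \mathcal{F}^k\mathfrak{h}_{-1} \subseteq \mathcal{F}^{\delta+1}\mathfrak{h}_{-1}$ satisfying $\widetilde{\phi}_k \equiv \phi_k^{(k)} \pmod{\mathcal{F}^{k+1}\mathfrak{h}}$. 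The hypothesis $\mathcal{F}^{\delta+1}H_{-1}(\mathfrak{g}^\psi) = 0$ then provides $z \in \mathfrak{h}_0$ with $d^\psi(z) = \widetilde{\phi}_k$, and projecting to the quotient yields $\vartheta_k = [\widetilde{\phi}_k] = [d^\psi(z)] = 0$, as required.

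The main obstacle is the construction of the cycle $\widetilde{\phi}_k$. The naive correction at weight $k + r\delta$ takes the form $\phi_k^{(k+r\delta)}$ and succeeds as long as the bracket terms $[\phi_k,\phi_k]^{(k+r\delta)}$ vanish, which holds whenever $r\delta < k$ by the weight restriction on $\phi_k$; beyond this range, one must absorb the bracket contributions into further adjustments living in strictly deeper filtration stages. The completeness of the filtration ensures that this iterative process converges to a well-defined element of $\mathcal{F}^k\mathfrak{h}$, and this is precisely where the $\delta$-weight-graded hypothesis is essential.
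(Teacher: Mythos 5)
Your global strategy coincides with the paper's: set $\mathfrak{h}=\mathfrak{g}^{\psi}$, $\phi=\varphi-\psi$, run Construction \ref{constru} with the refinement of Proposition \ref{obstructions}, show every class $\vartheta_k$ vanishes, and conclude by Theorem \ref{suites d'obstructions}. The treatment of $k\leqslant\delta$ and the final reduction are fine. Where you diverge is the key vanishing for $k\geqslant\delta+1$: the paper disposes of it in one line, observing that $\vartheta_k$ lies in $\mathcal{F}^{\delta+1}H_{-1}\bigl(\mathfrak{g}^{\psi}/\mathcal{F}^{k+1}\mathfrak{g}^{\psi}\bigr)$ and asserting that this group is zero, whereas you try to justify this by lifting the truncated cycle $\phi_k^{(k)}$ to a genuine $d^{\psi}$-cycle $\widetilde{\phi}_k\in\mathcal{F}^{\delta+1}\mathfrak{h}_{-1}$ and then applying the hypothesis. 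It is exactly this lifting step that does not close.

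Concretely, write $d^{\psi}=D_0+D_{\delta}$ with $D_0$ weight-preserving and $D_{\delta}$ raising the weight by $\delta$. Your iterative scheme requires, at each weight $k+r\delta$, solving an equation of the form $D_0\,c^{(k+r\delta)}=-D_{\delta}\,c^{(k+(r-1)\delta)}-\tfrac12[\phi_k,\phi_k]^{(k+r\delta)}-\cdots$ for a degree $-1$ element $c^{(k+r\delta)}$. Once the weight reaches $2k$ the right-hand side is only known to be a $D_0$-cycle of homological degree $-2$; being a cycle does not make it a boundary, and the hypothesis $\mathcal{F}^{\delta+1}H_{-1}(\mathfrak{g}^{\psi})=0$ — a degree $-1$ statement about the total differential $d^{\psi}$ — gives no control over these degree $-2$, weight-homogeneous obstruction classes for $D_0$. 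Completeness of the filtration guarantees that a sequence of corrections, once chosen, converges to an element of $\mathcal{F}^{k}\mathfrak{h}$; it does not guarantee that each correction exists, and "absorbing the bracket contributions into deeper filtration stages" cannot be done without solving precisely such an equation at the current weight. So the existence of $\widetilde{\phi}_k$ is not established, and with it the vanishing of $\vartheta_k$ beyond the range where the bracket terms are absent. Everything downstream of the lift (namely $\widetilde{\phi}_k=d^{\psi}(z)$ and hence $\vartheta_k=[\pi_{k+1}(d^{\psi}z)]=0$) is correct, so the gap is localized entirely in the construction of $\widetilde{\phi}_k$; to repair the argument you would either need an additional hypothesis controlling the degree $-2$ obstructions, or a different route to the statement $\mathcal{F}^{\delta+1}H_{-1}\bigl(\mathfrak{h}/\mathcal{F}^{k+1}\mathfrak{h}\bigr)=0$ that the paper invokes.
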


\begin{proof}
For any Maurer--Cartan element $\varphi \in \mathrm{MC}(\mathfrak{g})$ such that $\varphi - \psi \in \mathcal{F}^{\delta +1} \mathfrak{g}$, let $(\vartheta_k)$ be a gauge triviality sequence associated to $\phi \coloneqq \varphi - \psi$ in $H_{-1}(\mathfrak{g}^{\psi})$. Since $\phi \in \mathcal{F}^{\delta +1} \mathfrak{h}$, we can assume that $\phi_{k + 1}  = \phi$, $\upsilon_k$ is the unit element and $\vartheta_k =0$, for all $1 \leqslant k \leqslant \delta$. Furthermore for all $k \geqslant \delta + 1$, we have \[\vartheta_k \in \mathcal{F}^{\delta +1} H_{-1}\left(\mathfrak{g}^{\psi} / \mathcal{F}^{k+1} \mathfrak{g}^{\psi}\right) = 0  \ . \]  The gauge triviality sequence is then an infinite sequence of vanishing homology classes and th gauge equivalence degree of $\varphi$ and $\psi$ is equal to $\infty$. By Theorem \ref{suites d'obstructions}, the Maurer-Cartan element $\varphi$ and $\psi$ are gauge equivalent.
\end{proof}

\subsection{Equivalence descent}\label{descent}

 Let $R \to S$ be a morphism. If two Maurer--Cartan elements $\varphi$ and $\psi$ are gauge equivalent in $\mathfrak{g}$, then $\varphi \otimes 1$ and $\psi \otimes 1$ are gauge equivalent in $\mathfrak{g} \otimes S$. In this section, we prove that the converse holds true if $S$ is a faithfully flat $R$-algebra, by using the obstruction theory of Section \ref{1.2}.

\begin{theorem}[Equivalence descent] \label{cas complet}
	Let $S$ be a faithfully flat commutative $R$-algebra. Let $\mathfrak{g}$ be a complete dg Lie algebra over $R$ and let $\varphi$ and $\psi$ be two Maurer--Cartan elements in $\mathfrak{g}\ .$ Then $\varphi$ and $\psi$ are gauge equivalent in $\mathfrak{g}$ if and only if $\varphi \otimes 1$ is gauge equivalent to $\psi \otimes 1$ in $ \mathfrak{g} \otimes S$~. 
\end{theorem}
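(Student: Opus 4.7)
The forward implication is immediate: if $\lambda \in \mathfrak{g}_0$ satisfies $\lambda \cdot \varphi = \psi$, then $\lambda \otimes 1 \in (\mathfrak{g} \otimes S)_0$ realizes the gauge equivalence between $\varphi \otimes 1$ and $\psi \otimes 1$. For the converse, set $\mathfrak{h} \coloneqq \mathfrak{g}^{\psi}$ and $\phi \coloneqq \varphi - \psi \in \mathrm{MC}(\mathfrak{h})$. By Proposition \ref{tech}(3), it suffices to show that if $\phi \otimes 1$ is gauge trivial in $\mathfrak{h} \otimes_R S$, then $\phi$ is gauge trivial in $\mathfrak{h}$. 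The approach combines the obstruction theory of Section \ref{1.2} with the following faithful flatness principle: for any chain complex $M$ of $R$-modules, flatness of $S$ gives $H_{\bullet}(M) \otimes_R S \cong H_{\bullet}(M \otimes_R S)$, and faithful flatness further ensures that a class $[x] \in H_{\bullet}(M)$ vanishes if and only if its image $[x \otimes 1] \in H_{\bullet}(M \otimes_R S)$ does.

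By Theorem \ref{suites d'obstru} applied to $\mathfrak{h} \otimes S$, the gauge equivalence degree of $\phi \otimes 1$ and $0$ is equal to $\infty$. I would then build a gauge triviality sequence $(\vartheta_k)$ for $\phi$ in $\mathfrak{h}$ as in Construction \ref{constru}, making all choices of gauges $\upsilon_k \in \mathfrak{h}_0$. Tensoring these choices by $1_S$ produces a gauge triviality sequence for $\phi \otimes 1$ in $\mathfrak{h} \otimes S$ whose obstructions are exactly $\vartheta_k \otimes 1$. By Lemma \ref{gauge deg}, this tensored sequence must also be infinite, so $\vartheta_k \otimes 1 = 0$ for all $k$. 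By the faithful flatness principle, $\vartheta_k = 0$ in $H_{-1}(\mathfrak{h}/\mathcal{F}^{k+1}\mathfrak{h})$, so the construction extends indefinitely and the gauge equivalence degree of $\phi$ and $0$ in $\mathfrak{h}$ is also $\infty$.

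The last and most delicate task is to promote degree $\infty$ to an actual gauge equivalence, since—as noted in the remark following Theorem \ref{suites d'obstru}—the composite \eqref{composition} of the gauges produced in Construction \ref{constru} is not automatically convergent. Here, one leverages the extra datum of a global gauge $\Lambda \in (\mathfrak{h} \otimes S)_0$ with $\Lambda \cdot (\phi \otimes 1) = 0$ to refine the induction so that the gauges $\omega_n \in \mathfrak{h}_0$ additionally satisfy the Cauchy condition $\mathrm{BCH}(\omega_{n+1}, -\omega_n) \in \mathcal{F}^n \mathfrak{h}$. At step $n+1$ this amounts to choosing the increment $\nu \in \mathcal{F}^n \mathfrak{h}_0$; the corresponding refined obstruction lies in $H_{-1}(\mathcal{F}^n \mathfrak{h}/\mathcal{F}^{n+1}\mathfrak{h})$ and vanishes after tensoring with $S$, where the small gauge $\mathrm{BCH}(\Lambda, -\omega_n \otimes 1)$ trivializes $(\omega_n \cdot \phi) \otimes 1$. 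Faithful flatness transfers this vanishing to $\mathfrak{h}$, and completeness of the filtration ensures that $\omega \coloneqq \lim_n \omega_n$ exists in $\mathfrak{h}_0$ with $\omega \cdot \phi = 0$ (since $\omega_n \cdot \phi \in \mathcal{F}^{n+1}\mathfrak{h}$ and $\bigcap_n \mathcal{F}^n \mathfrak{h} = 0$).

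The hard part will be justifying the refined obstruction step: a priori the gauge $\mathrm{BCH}(\Lambda, -\omega_n \otimes 1)$ need not lie in $\mathcal{F}^n(\mathfrak{h} \otimes S)_0$, so producing an honest small gauge over $S$ will require a descent argument that modifies $\Lambda$ by closed elements in the stabilizer of $0$ so as to control its filtration degree modulo $\mathcal{F}^{n+1}(\mathfrak{h} \otimes S)$. Once this is achieved the rest of the argument is formal, combining faithful flatness with completeness.
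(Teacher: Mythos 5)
Your proposal correctly reduces to the gauge triviality of $\phi=\varphi-\psi$ in $\mathfrak{h}=\mathfrak{g}^{\psi}$ via Proposition \ref{tech}, and the first half (showing the gauge equivalence degree of $\phi$ and $0$ is $\infty$ by combining Proposition \ref{obstru} over $S$ with flat base change for homology and faithfulness) is sound in substance, even if the invocation of Lemma \ref{gauge deg} for the tensored sequence is slightly off: a terminating sequence over $R$ need not tensor to a \emph{terminating} sequence over $S$, so one should argue directly that $[\pi_{n+1}(\phi_n\otimes 1)]=0$ and descend that class. However, there is a genuine gap exactly where you flag one: the passage from gauge equivalence degree $\infty$ to an actual gauge equivalence. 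You propose to refine the induction so that the increments lie in $\mathcal{F}^{n}\mathfrak{h}_0$, but you concede that the candidate small gauge $\mathrm{BCH}(\Lambda,-\omega_n\otimes 1)$ over $S$ need not have the required filtration degree and that some unspecified ``descent argument modifying $\Lambda$ by closed elements in the stabilizer of $0$'' is needed. That unresolved step is the entire content of the theorem beyond Theorem \ref{suites d'obstru}, so as written the proof is incomplete.

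The paper's proof resolves this with one idea you are missing: an $R$-module retraction $r:\mathfrak{h}\otimes S\to\mathfrak{h}$ induced by a splitting of the inclusion $R\subset S$. One sets $\upsilon_1\coloneqq r(\lambda)$, and inductively $\lambda_{k+1}\coloneqq\mathrm{BCH}(\lambda_k,-\upsilon_k\otimes 1)$, $\upsilon_{k+1}\coloneqq r(\lambda_{k+1})$. Applying $r$ to the congruence $\phi_k\otimes 1\equiv d^{\psi\otimes 1}(\lambda_k)\pmod{\mathcal{F}^{k+1}\mathfrak{h}\otimes S}$ both kills the obstruction class over $R$ (so no separate flat-base-change argument is needed) and, crucially, controls the size of the increments: since $r((-)\otimes 1)=\mathrm{id}$, one gets $\upsilon_{k+1}\equiv r(\lambda_k)-r(\lambda_k)\equiv 0\pmod{\mathcal{F}^{k+1}\mathfrak{h}}$, so $\upsilon_{k+1}\in\mathcal{F}^{k+1}\mathfrak{h}$ and the infinite $\mathrm{BCH}$ composite converges by completeness (the paper cites \cite[Lemma~6.6]{RNV20}). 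This is precisely the Cauchy condition you wanted, obtained for free from the retraction rather than from a modification of $\Lambda$. To repair your proof you would need to either import this retraction (equivalently, note as the paper's subsequent remark does that $\mathfrak{h}$ is a filtered retract of $\mathfrak{h}\otimes S$ and invoke \cite[Theorem~1.7]{CPW}) or actually carry out the stabilizer argument you only sketch.
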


\begin{proof}
	By Proposition \ref{tech}, it suffices to prove that $\phi  = \varphi - \psi$ is gauge trivial in $\mathfrak{h} = \mathfrak{g}^{\psi}$ if and only if $\phi \otimes 1$ is gauge trivial in $\mathfrak{h}\otimes S$. The direct implication is immediate. Conversely, suppose that there exists $\lambda \in \mathfrak{h}$ such that \begin{equation}\label{eq9}
		\lambda \cdot  \left(\phi \otimes 1\right)  = 0 \quad \mbox{in} \quad \mathfrak{h} \otimes S \ .
	\end{equation}   Let us construct a gauge triviality sequence of $\phi$ as follows. By (\ref{eq9}), we have \[\phi \otimes 1 \equiv d^{\psi \otimes 1} \left(\lambda\right) \pmod{\mathcal{F}^{2} \mathfrak{h} \otimes S}  \ .  \] Let us note that the scalar extension $\mathfrak{h} \to \mathfrak{h} \otimes S,  x \mapsto x \otimes 1$ is injective since $R \to S$ is faithfully flat. Picking a $R$-module retract for the inclusion $R \subset S$ induces a retraction $r : \mathfrak{h} \otimes S \to \mathfrak{h}$. The above equivalence leads to  \[\phi  \equiv d^{\psi } (r(\lambda)) \pmod{\mathcal{F}^{2} \mathfrak{h}} \quad \mbox{and} \quad \vartheta_1 = \left[ \pi_2 (\phi)\right] = 0\ . \] Let us set $\upsilon_1 = r(\lambda)$ and $\phi_2 = \upsilon_1 \cdot \phi$. The element $\lambda_2 \coloneqq \mathrm{BCH}(\lambda, - \upsilon_1 \otimes 1)$ defines a gauge equivalence between $\phi_2 \otimes 1$ and $0$. As before, it leads to \[\phi_2  \equiv d^{\psi} (r(\lambda_2)) \pmod{\mathcal{F}^{2} \mathfrak{h}} \quad \mbox{and} \quad \vartheta_2 = \left[ \pi_3 (\phi_2)\right] =0 \ . \] Let us set $\upsilon_2 = r(\lambda_2)$ and $\phi_3 = \upsilon_2 \cdot \phi_2 .$  The construction can be performed higher up. This leads to a gauge triviality sequence $(\vartheta_k)$ which is an infinite sequence of vanishing homology classes associated with a sequence a Maurer--Cartan element $(\phi_k)$ and gauges $(\upsilon_k)$, defined inductively by \[\phi_{k+1} = \upsilon_k \cdot \phi_k \in \mathcal{F}^{k+1} \mathfrak{h}, \quad \lambda_{k + 1} =  \mathrm{BCH}(\lambda_k, - \upsilon_k \otimes 1), \quad \mbox{and} \quad \upsilon_{k+1} = r \left( \lambda_{k + 1} \right)   \ , \] 
	for all $k \geqslant 1$. By induction, we have that $\upsilon_k \in \mathcal{F}^k \mathfrak{h}$, for all $k \geqslant 1$: if $\upsilon_k \in \mathcal{F}^k \mathfrak{h}$, we have \[\upsilon_{k+1} \equiv r \left( \lambda_k, - r(\lambda_k) \otimes 1 \right) \equiv r(\lambda_k) - r(\lambda_k) \equiv 0   \pmod{\mathcal{F}^{k+1} \mathfrak{h}} \ .  \]  In particular, the sequence $(\lambda_k)$ converges to an element $\omega \in \mathfrak{h}$ such that $\omega \cdot \phi = 0$, see \cite[Lemma~6.6]{RNV20}.
\end{proof}

\begin{remark}
	Theorem \ref{cas complet} also follows from \cite[Theorem~1.7]{CPW} since picking a $R$-module retract for the inclusion $R \subset S$ induces a retraction $r : \mathfrak{h} \otimes S \to \mathfrak{h}$ which turns $\mathfrak{h}$ into a retract of $\mathfrak{h} \otimes S$ as a filtered $\mathfrak{h}$-module. 
\end{remark}

\noindent We conclude this section with an example coming from algebraic topology. Using \cite[Theorem 1.5]{berglund15}, we can see fibrations of topological spaces as Maurer-Cartan elements of a certain complete dg Lie algebra. By applying Theorem \ref{cas complet}, we can detect whether a given fibration is trivial up to homotopy.

\begin{definition}[Localizations]
	Let $F$ be a  nilpotent topological space of finite $\mathbb{Q}$-type and let $A(F)$ be its Sullivan model $F$, see \cite[Section~I.9]{FHT01}. Its $\mathbb{Q}$-localization is the space $F_{\mathbb{Q}} = \langle A(F) \rangle$ defined by the geometric realization of the simplicial set \[\mathrm{Hom}_{\mathrm{cdga}} \left(A(F) , \mathcal{A}_{PL}(\Delta^{\bullet}) \right) \ . \] We defined similarly its $\mathbb{R}$-localization $F_{\mathbb{R}}$ as the geometric realization of the simplicial set \[\mathrm{Hom}_{\mathrm{cdga}} \left(A(F) , \mathcal{A}_{PL}(\Delta^{\bullet}) \otimes \mathbb{R} \right) \ . \]
\end{definition}

\begin{theorem}\label{fibrations}
	Let $X$ be a simply connected topological space and let $F$ be a nilpotent space of finite $\mathbb{Q}$-type. A fibration $\xi$ over $X$ with fiber $F_{\mathbb{Q}}$ is trivial up to homotopy if and only if the fibration $\xi \otimes \mathbb{R}$, obtained after extending fiberwise the scalars to $\mathbb{R}$, is trivial up to homotopy.
\end{theorem}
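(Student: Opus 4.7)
The plan is to reduce the topological statement to a descent question for Maurer--Cartan elements and then to apply Theorem \ref{cas complet} to the faithfully flat extension $\mathbb{Q} \to \mathbb{R}$. To do so, I would use the classification of fibrations in terms of a complete dg Lie algebra provided by \cite[Theorem~1.5]{berglund15}.

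First, under the simple connectedness hypothesis on $X$ and the nilpotent finite $\mathbb{Q}$-type hypothesis on $F$, Berglund's theorem realises a bijection between homotopy classes of fibrations over $X$ with fiber $F_{\mathbb{Q}}$ and the moduli space $\mathcal{MC}(\mathfrak{g})$ of Maurer--Cartan elements of a certain complete dg Lie algebra $\mathfrak{g}$ over $\mathbb{Q}$, built out of a Sullivan model of $X$ and a derivation Lie algebra of $A(F)$. The trivial fibration $X \times F_{\mathbb{Q}} \to X$ corresponds to the zero Maurer--Cartan element, so that a fibration $\xi$ is trivial up to homotopy if and only if its classifying Maurer--Cartan element $\varphi_{\xi} \in \mathrm{MC}(\mathfrak{g})$ is gauge trivial.

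Second, I would observe that Berglund's construction is compatible with scalar extension from $\mathbb{Q}$ to $\mathbb{R}$: on the algebraic side, replacing the Sullivan $\mathbb{Q}$-model $A(F)$ by $A(F) \otimes \mathbb{R}$ replaces $\mathfrak{g}$ by $\mathfrak{g} \otimes_{\mathbb{Q}} \mathbb{R}$ (still a complete dg Lie algebra for the induced filtration), and on the topological side this corresponds to extending the fiber from $F_{\mathbb{Q}}$ to $F_{\mathbb{R}}$. Under these identifications, the fibration $\xi \otimes \mathbb{R}$ is classified by $\varphi_{\xi} \otimes 1 \in \mathrm{MC}(\mathfrak{g} \otimes_{\mathbb{Q}} \mathbb{R})$, and so is trivial up to homotopy if and only if $\varphi_{\xi} \otimes 1$ is gauge trivial in $\mathfrak{g} \otimes_{\mathbb{Q}} \mathbb{R}$.

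Finally, since $\mathbb{R}$ is faithfully flat over $\mathbb{Q}$, Theorem \ref{cas complet} applied to $\varphi = \varphi_{\xi}$ and $\psi = 0$ in $\mathfrak{g}$ yields the equivalence
\[
\varphi_{\xi} \text{ is gauge trivial in } \mathfrak{g} \iff \varphi_{\xi} \otimes 1 \text{ is gauge trivial in } \mathfrak{g} \otimes_{\mathbb{Q}} \mathbb{R},
\]
which, combined with the two preceding dictionaries, gives the theorem. The main obstacle is the compatibility step: one must check carefully that the complete dg Lie algebra classifying $F_{\mathbb{R}}$-fibrations over $X$ coincides with the naive scalar extension $\mathfrak{g} \otimes_{\mathbb{Q}} \mathbb{R}$ and that the classifying element of $\xi \otimes \mathbb{R}$ is precisely $\varphi_{\xi} \otimes 1$; once this naturality of Berglund's model under scalar extension is established, the rest is a direct application of the descent machinery developed in Section \ref{descent}.
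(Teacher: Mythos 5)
Your proposal is correct and follows exactly the route the paper intends: classify fibrations with fiber $F_{\mathbb{Q}}$ by Maurer--Cartan elements of a complete dg Lie algebra via \cite[Theorem~1.5]{berglund15}, identify triviality up to homotopy with gauge triviality of the classifying element, and apply Theorem \ref{cas complet} to the faithfully flat extension $\mathbb{Q} \to \mathbb{R}$ with $\psi = 0$. In fact your write-up is more explicit than the printed proof, which simply reproduces the argument of Theorem \ref{cas complet} verbatim and leaves the Berglund dictionary and its compatibility with scalar extension (the step you rightly flag as the one requiring care) implicit.
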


	\begin{proof}
	By Proposition \ref{tech}, it suffices to prove that $\phi  = \varphi - \psi$ is gauge trivial in $\mathfrak{h} = \mathfrak{g}^{\psi}$ if and only if $\phi \otimes 1$ is gauge trivial in $\mathfrak{h}\otimes S$. The direct implication is immediate. Conversely, suppose that there exists $\lambda \in \mathfrak{h}$ such that \begin{equation}\label{eq93}
		\lambda \cdot  \left(\phi \otimes 1\right)  = 0 \quad \mbox{in} \quad \mathfrak{h} \otimes S \ .
	\end{equation}   Let us construct a gauge triviality sequence of $\phi$ as follows. By (\ref{eq93}), we have \[\phi \otimes 1 \equiv d^{\psi \otimes 1} \left(\lambda\right) \pmod{\mathcal{F}^{2} \mathfrak{h} \otimes S}  \ .  \] Let us note that the scalar extension $\mathfrak{h} \to \mathfrak{h} \otimes S,  x \mapsto x \otimes 1$ is injective since $R \to S$ is faithfully flat. Picking a $R$-module retract for the inclusion $R \subset S$ induces a retraction $r : \mathfrak{h} \otimes S \to \mathfrak{h}$. The above equivalence leads to  \[\phi  \equiv d^{\psi } (r(\lambda)) \pmod{\mathcal{F}^{2} \mathfrak{h}} \quad \mbox{and} \quad \vartheta_1 = \left[ \pi_2 (\phi)\right] = 0\ . \] Let us set $\upsilon_1 = r(\lambda)$ and $\phi_2 = \upsilon_1 \cdot \phi$. The element $\lambda_2 \coloneqq \mathrm{BCH}(\lambda, - \upsilon_1 \otimes 1)$ defines a gauge equivalence between $\phi_2 \otimes 1$ and $0$. As before, it leads to \[\phi_2  \equiv d^{\psi} (r(\lambda_2)) \pmod{\mathcal{F}^{2} \mathfrak{h}} \quad \mbox{and} \quad \vartheta_2 = \left[ \pi_3 (\phi_2)\right] =0 \ . \] Let us set $\upsilon_2 = r(\lambda_2)$ and $\phi_3 = \upsilon_2 \cdot \phi_2 .$  The construction can be performed higher up. This leads to a gauge triviality sequence $(\vartheta_k)$ which is an infinite sequence of vanishing homology classes associated with a sequence a Maurer--Cartan element $(\phi_k)$ and gauges $(\upsilon_k)$, defined inductively by \[\phi_{k+1} = \upsilon_k \cdot \phi_k \in \mathcal{F}^{k+1} \mathfrak{h}, \quad \lambda_{k + 1} =  \mathrm{BCH}(\lambda_k, - \upsilon_k \otimes 1), \quad \mbox{and} \quad \upsilon_{k+1} = r \left( \lambda_{k + 1} \right)   \ , \] 
	for all $k \geqslant 1$. By induction, we have that $\upsilon_k \in \mathcal{F}^k \mathfrak{h}$, for all $k \geqslant 1$: if $\upsilon_k \in \mathcal{F}^k \mathfrak{h}$, we have \[\upsilon_{k+1} \equiv r \left( \lambda_k, - r(\lambda_k) \otimes 1 \right) \equiv r(\lambda_k) - r(\lambda_k) \equiv 0   \pmod{\mathcal{F}^{k+1} \mathfrak{h}} \ .  \]  In particular, the sequence $(\lambda_k)$ converges to an element $\omega \in \mathfrak{h}$ such that $\omega \cdot \phi = 0$, see \cite[Lemma~6.6]{RNV20}.
\end{proof}

\begin{remark}
	Theorem \ref{cas complet} also follows from \cite[Theorem~1.7]{CPW} since picking a $R$-module retract for the inclusion $R \subset S$ induces a retraction $r : \mathfrak{h} \otimes S \to \mathfrak{h}$ which turns $\mathfrak{h}$ into a retract of $\mathfrak{h} \otimes S$ as a filtered $\mathfrak{h}$-module. 
\end{remark}

\begin{example}
Let us denote by $\mathrm{Conf}_n(X)$ the space of configurations of $n$ disjoint points in a given space $X$~. For all integers $n,d$, the Fadell--Neuwirth fibration is defined by \[\mathrm{Conf}_{n-1}\left(\mathbb{R}^d\right) \longrightarrow \mathrm{Conf}_n\left(\mathbb{S}^d\right) \longrightarrow \mathbb{S}^d \ ,\] where the second map is the evaluation on the last point of a given configuration, see \cite[Theorem~3]{Fad62}. Let us assume that $d$ is odd and $d \geqslant5$ (the cases $d= 1$ and $d=3$ are trivial using the groups structures of $\mathbb{S}^1$ and $\mathbb{S}^3$). This fibration is trivial after extending fiberwise the scalars to $\mathbb{R}$~, see \cite[Section~4]{Hay22}. Using Theorem \ref{fibrations}, the Fadell--Neuwirth fibration with rationalized fibers is trivial up to homotopy.  
\end{example}

\section{\textcolor{bordeau}{Obstruction sequences to homotopy equivalences}}\label{section2}

Let \( A \) be a chain complex over a commutative ring \( R \), and let \( \varphi \) and \( \psi \) be two algebraic structures defined on \( A \) (e.g., dg associative algebras, dg Frobenius algebras, etc.). As recalled in Section \ref{2.1}, such algebraic structures can be viewed as Maurer–Cartan elements of a certain complete differential graded Lie algebra \( \mathfrak{g}_A \). When \( R \) is a \( \mathbb{Q} \)-algebra, the existence of a homotopy equivalence between \( \varphi \) and \( \psi \) can be reformulated as a gauge equivalence problem (see Section \ref{2.2}). In Section \ref{2.35}, we apply the obstruction theory developed in Section \ref{section1} to study the existence of homotopy equivalences in characteristic zero. The Lie algebra \( \mathfrak{g}_A \) is, in fact, a weight-graded convolution dg Lie-admissible algebra. This enriched structure allows us to generalize the results of Section \ref{section1} in order to study formality over any commutative ground ring \( R \), as discussed in Sections \ref{2.4} and \ref{2.5}.

\begin{remark}
	This section generalizes to any algebraic structure the constructions of \cite{HJ79} which addresses the case of commutative algebras in characteristic zero. It generalizes to any commutative ground ring and algebraic structure the constructions of \cite{Sal17} focusing on the formality of associative, commutative and Lie algebras in characteristic zero. 
\end{remark}

\noindent Let us fix $R$ a commutative ring and let $A$ be a differential graded $R$-module. Let $\C$ be a reduced conilpotent dg coproperad over $R$. Let us denote the coaugmentation coideal $ \overline{\C}$~. Let us fix a filtration of this cooperad \[ \mathcal{F}^1 \C \subseteq \mathcal{F}^2 \C \subseteq \cdots \subseteq  \mathcal{F}^n \C \subseteq \cdots  \] that is exhaustive: $\mathrm{colim}_n \mathcal{F}^n \C = \C$. For instance, one can take $\mathcal{F}$ to be the coradical filtration.

\begin{example}
	Among possible choices for such data, we have: 
	\begin{enumerate}
		\item The bar construction $\C \coloneqq \Bar \P$ of a reduced properad $\P$ in $\mathrm{gr Mod}_R$~, see \cite{PHC}~.
		\item The Koszul dual coproperad $\C := \P^{\antishriek}$, if $\P$ is a homogeneous Koszul properad, see \cite{Vallette_2007}, or an inhomogeneous quadratic Koszul properad, see \cite[Appendix~A]{GCTV12}.		
		\item The quasi-planar cooperad $\C = \Bar \left(\mathcal{E} \otimes \P\right) $ where $\mathcal{E}$ denotes the Barratt-Eccles operad and $\P$ is any reduced operad, see \cite{LRL23} \ . 
	\end{enumerate}
\end{example}

\subsection{Deformation complex of algebras over properads}\label{2.1}

In this section, we recall the definition of an $\Cobar \C$-algebra structure on $A$, as Maurer--Cartan element of the associated convolution algebra $\mathfrak{g}_A$~. We refer the reader to \cite{PHC} for more details.

\begin{definition}[dg Lie-admissible algebra]
	A \emph{complete dg Lie-admissible algebra} $(\mathfrak{g},  \star, d , \mathcal{F})$ is a graded $R$-module $\mathfrak{g}$ equipped with 
\begin{itemize}
	\item[$\centerdot$]  a linear map of degree zero $\star : A\otimes A \to A$ whose skew-symmetrization \[[x,y] := x \star y - (-1)^{|x||y|} y \star x \] satisfies the Jacobi identity, i.e. induces a dg Lie algebra structure. 
	\item[$\centerdot$] a differential $d : \mathfrak{g} \to \mathfrak{g}$, i.e. a degree $-1$ derivation that squares to zero.
	\item[$\centerdot$] a complete descending filtration $\mathcal{F}$, i.e. a decreasing chain of sub-complexes \[\mathfrak{g} = \mathcal{F}^1 \mathfrak{g} \supset \mathcal{F}^2 \mathfrak{g} \supset \mathcal{F}^3 \mathfrak{g} \supset \cdots  \] compatible with the product $\mathcal{F}^n \mathfrak{g} \star \mathcal{F}^m \mathfrak{g} \subset \mathcal{F}^{n+m} \mathfrak{g}$, and such that the canonical projections $\pi_n : \mathfrak{g} \twoheadrightarrow  \mathfrak{g} /\mathcal{F}^n \mathfrak{g}$ lead to an isomorphism \[\pi : \mathfrak{g} \to \lim_{n \in \mathbb{N}} \mathfrak{g} / \mathcal{F}^n\mathfrak{g} \ . \] 
\end{itemize} Its set of \emph{Maurer--Cartan elements} is defined as $\mathrm{MC(\mathfrak{g})} \coloneqq \lbrace x \in \mathfrak{g}_{-1} \mid dx + x\star x = 0 \rbrace \ . $
\end{definition}

	\begin{definition}[Infinitesimal composition]
The \emph{infinitesimal composition} of two $\mathbb{S}$-bimodules $\mathcal{M}$ and $\mathcal{N}$ is the sub-$\mathbb{S}$-bimodule composed of the parts which are linear in $\mathcal{M}$ and in $\mathcal{N}$: \[\mathcal{M} \underset{(1,1)}{\boxtimes} \mathcal{N} \subseteq (\mathcal{I} \otimes \mathcal{M}) \boxtimes (\mathcal{I} \otimes \mathcal{N})\ .\] Let $(\mathbf{\P}, \gamma, \eta)$ be a properad and let $(\C, \Delta, \epsilon, \eta )$ be a coaugmented cooperad.  
\begin{itemize}
	\item[$\centerdot$] The \emph{infinitesimal composition map} of $\P$ \[\gamma_{(1,1)} : \P \underset{(1,1)}{\boxtimes} \P \longrightarrow (\mathcal{I} \otimes \P) \boxtimes (\mathcal{I} \otimes \P) \xrightarrow{(\eta + \mathrm{id}) \boxtimes (\eta + \mathrm{id})}  \P \boxtimes \P \overset{\gamma}{\longrightarrow} \P \ .  \]
	\item[$\centerdot$]  The \emph{infinitesimal decomposition map} $\Delta_{(1,1)}$ of $\C$   is given by
	\[ \mathcal{I} \overset{\cong}{\longrightarrow} \mathcal{I} \boxtimes \mathcal{I} \quad \mbox{and} \quad \overline{\C} \overset{\Delta}{\longrightarrow} \C \boxtimes \C \twoheadrightarrow \overline{\C} \underset{(1,1)}{\boxtimes} \overline{\C} \ . \] \end{itemize} 
\end{definition}

\begin{definition}[Convolution dg Lie-admissible algebra]
	The \emph{convolution dg Lie-admissible algebra} associated to $\C$ and $A$ is the complete dg Lie-admissible algebra \[\mathfrak{g}_{A} := \left( \Hom_{\mathbb{S}} \left(\overline{\C}, \mathrm{End}_A\right), \star, d , \mathcal{F} \right) \ , \] 	whose the underlying space is \[\Hom_{\mathbb{S}}\left(\overline{\C}, \End_A \right) \coloneqq \prod_{m,n \geqslant0} \Hom_{\mathbb{S}_m^{op} \times \mathbb{S}_n}\left(\overline{\C}(m,n),\End_A(m,n) \right)\ ,\] equipped with the Lie-admissible product \[\varphi \star \psi \coloneqq \overline{\C} \xrightarrow{\Delta_{(1,1)}} \overline{\C} \underset{(1,1)}{\boxtimes} \overline{\C} \xrightarrow{\varphi \underset{(1,1)}{\boxtimes}  \psi} \End_A \underset{(1,1)}{\boxtimes} \End_A \xrightarrow{\gamma_{(1,1)}} \End_A \ , \] the differential \[d (\varphi) \coloneqq d_{\End_A} \circ \varphi - (-1)^{|\varphi|} \varphi \circ d_{\overline{\C}} \] and the complete descending filtration given by \begin{equation}\label{filtration}
	\mathcal{F}^n \mathfrak{g}_{A} \coloneqq \prod_{ k \geqslant n  } \Hom_{\mathbb{S}_m^{op} \times \mathbb{S}_n}\left(\mathcal{F}^k \overline{\C},\End_A \right) \ .
	\end{equation} It embeds into the complete dg Lie-admissible algebra made up of all the maps from $\C$~, i.e. \[\mathfrak{g}_{A} \hookrightarrow \mathfrak{a}_{A} \coloneqq \left( \Hom_{\mathbb{S}} \left(\C, \mathrm{End}_A\right), \star, d \right) \ . \]  More generally, let us consider the complete dg  Lie-admissible algebra \[	\mathfrak{a}_B^A \coloneqq \Hom_{\mathbb{S}} \left(\C, \mathrm{End}_B^A\right) \ .\]  We refer the reader to \cite[Proposition 11]{Merkulov_2009} for more details.
\end{definition}

\begin{proposition}
	A \emph{$\Cobar \C$-algebra structure} $\varphi$ on $A$ is a Maurer--Cartan element in $\mathfrak{g}_A$~, i.e. \[\varphi \in \mathrm{MC}(\mathfrak{g}_{A}) \coloneqq \{\varphi \in \mathfrak{g}_{A} \mid |\varphi|=-1, \; d(\varphi) + \varphi \star \varphi = 0\} \ .\] 
\end{proposition}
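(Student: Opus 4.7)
The plan is to use the universal property of the cobar construction $\Cobar \C$ and match the resulting compatibility condition with the Maurer--Cartan equation term by term. First I would unfold the definition of $\Cobar \C$: it is the quasi-free properad $\mathcal{F}(s^{-1}\overline{\C})$ generated by the desuspension of the coaugmentation coideal, equipped with a differential $\partial = \partial_0 + \partial_1$, where $\partial_0$ extends the internal differential of $\C$ and $\partial_1$ is induced, on generators, by the infinitesimal decomposition map $\Delta_{(1,1)} : \overline{\C} \to \overline{\C} \underset{(1,1)}{\boxtimes} \overline{\C}$. The free--forgetful adjunction then implies that a morphism of graded properads $\Cobar \C \to \End_A$ is uniquely determined by its restriction to generators, namely by a degree $0$ map $f : s^{-1}\overline{\C} \to \End_A$ of $\mathbb{S}$-bimodules, and it upgrades to a morphism of \emph{dg} properads precisely when $f$ satisfies a compatibility condition with $\partial$ and $d_{\End_A}$.

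Next I would transport this datum through the suspension isomorphism to produce a degree $-1$ element $\varphi \in (\mathfrak{g}_A)_{-1}$. The compatibility condition with $\partial$ then splits into two pieces. The piece coming from $\partial_0$ and $d_{\End_A}$ reproduces exactly the linear term $d(\varphi) = d_{\End_A} \circ \varphi - (-1)^{|\varphi|}\varphi \circ d_{\overline{\C}}$ of the convolution differential. The piece coming from $\partial_1$ is, by the very definition of $\partial_1$ on generators, the composite $\gamma_{(1,1)} \circ (f \underset{(1,1)}{\boxtimes} f) \circ \Delta_{(1,1)}$, which matches the quadratic term $\varphi \star \varphi$ of the Lie-admissible product of $\mathfrak{g}_A$. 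Adding the two contributions, the equation $\partial \circ f = f \circ d_{\End_A}$ becomes, on generators, exactly $d(\varphi) + \varphi \star \varphi = 0$, and this correspondence is manifestly bijective.

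The main obstacle I anticipate is careful sign bookkeeping: the various desuspensions $s^{-1}$, the Koszul sign rule, and the conventions for $\Delta_{(1,1)}$ and $\gamma_{(1,1)}$ each contribute signs, and one must verify that these are consistent with the chosen conventions for $\star$ and $d$ in $\mathfrak{g}_A$. Once the signs are pinned down, the argument becomes a direct translation of the universal property; the result is the properadic counterpart of the classical operadic statement, recorded for instance in \cite{LodayVallette12}, and I would cite \cite{Merkulov_2009, PHC} for the properadic conventions.
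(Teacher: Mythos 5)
The paper does not prove this proposition itself---its ``proof'' is a one-line citation to \cite[Proposition~3.6]{PHC}---and your argument is exactly the standard one that this citation stands for: use the quasi-free presentation $\Cobar\C = (\mathcal{F}(s^{-1}\overline{\C}),\partial_0+\partial_1)$, reduce to generators via the universal property, and identify the two pieces of the compatibility with differentials with the linear term $d(\varphi)$ and the quadratic term $\varphi\star\varphi$ of the Maurer--Cartan equation in the convolution algebra. One minor slip: the compatibility condition for the induced morphism $F:\Cobar\C\to\End_A$ should read $d_{\End_A}\circ F=F\circ\partial$ rather than ``$\partial\circ f=f\circ d_{\End_A}$'', but this does not affect the substance of the argument.
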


\begin{proof}
	See \cite[Proposition~3.6]{PHC}. 
\end{proof}

\begin{definition}[Left/right actions]
For all $f : \C \to \End_B^A$,  the \emph{left action} of $\psi \in  \mathfrak{a}_B$ on $f$ and the \emph{right action} of $\varphi \in  \mathfrak{a}_A$ on $f$ are defined by 	\[\begin{array}{ccccccccc}
	\psi 	\lhd f & \coloneqq & \overline{\C} & \xrightarrow{\Delta_{(1,*)}} & \overline{\C} \underset{(1,*)}{\boxtimes} \C &  \xrightarrow{\psi \underset{(1,*)}{\boxtimes} f} &  \End_B \underset{(1,*)}{\boxtimes} \End_B^A & \longrightarrow & \End_B^A \ ,  \\
	& & \mathcal{I} & \overset {  \cong }{ \ \longrightarrow \ } & \mathcal{I} \ \boxtimes \ \mathcal{I} &  \xrightarrow{\varphi \; \boxtimes \; f} & \End_B \ \boxtimes \ \End_B^A & \longrightarrow & \End_B^A \ , \\ 
	& & & & & & & & \\ f 	\rhd \varphi & \coloneqq & \overline{\C} & \xrightarrow{\Delta_{(*,1)}} & \C \underset{(*,1)}{\boxtimes} \overline{\C}  &  \xrightarrow{f \underset{(*,1)}{\boxtimes} \varphi} &  \End_B^A \underset{(*,1)}{\boxtimes} \End_A & \longrightarrow & \End_B^A \ ,  \\
	& & \mathcal{I} & \overset{\cong}{\ \longrightarrow \ } & \mathcal{I} \ \boxtimes \ \mathcal{I} &  \xrightarrow{f \; \boxtimes \; \varphi} & \End_B^A \ \boxtimes \End_A & \longrightarrow & \End_B^A \ .
\end{array}\] 
\end{definition}

\begin{remark}
	The operator $\rhd$ defines a left $L_{\infty}$-module structure of the dg Lie algebra $\mathfrak{a}_B$ on $\mathfrak{a}_B^A$ and the operator $\lhd$ defines a right $L_{\infty}$-module structure of  $\mathfrak{a}_A$ on $\mathfrak{a}_B^A$.
\end{remark}

\begin{definition}[$\infty$-morphism]
	Let $(A,\varphi)$ and $(B,\psi) $ be two $\Cobar \C$-algebra structures. An \emph{$\infty$-morphism} $\varphi \rightsquigarrow \psi$ is a degree zero morphism $f \in \mathfrak{a}^A_B$ such that \[f 	\rhd \varphi -  \psi	\lhd f = d(f) \  .\] We consider the first component \[f^{(0)} : \mathcal{I} \hooklongrightarrow \C \overset{f}{\longrightarrow} \End^A_B \ . \] The $\infty$-morphism $f : \varphi \rightsquigarrow \psi$ is
	\begin{itemize}
		\item[$\centerdot$] an \emph{$\infty$-quasi-isomorphism} if $f^{(0)} \in \Hom(A,B)$ is a quasi-isomorphism; 
		\item[$\centerdot$] an \emph{$\infty$-isomorphism} if $f^{(0)} \in \Hom(A,B)$ is an isomorphism; 
		\item[$\centerdot$] an \emph{$\infty$-isotopy} if $A = B$ and $f^{(0)} = \mathrm{id}_A$~. 
	\end{itemize}
 The set of all the $\infty$-isotopies is denoted by $\Gamma_A$~.
\end{definition}

\begin{theorem}[{\cite[Proposition~3.19 and Theorem~3.22]{PHC}}]\label{invertibility prop} \leavevmode

	\begin{enumerate}
		\item The composite of $f : \C \to \mathrm{End}_B^A$ and $g : \C \to \mathrm{End}_C^B$ is defined by  \[g \circledcirc f \coloneqq \C \xrightarrow{\ \Delta \ } \C \ \boxtimes \ \C \xrightarrow{g \ \boxtimes \  f} \End_C^B \ \boxtimes \ \End_B^A \xrightarrow{\ \gamma \ } \End_C^A \ . \]
		\item  The composite of $f : \varphi \rightsquigarrow \psi$ and $g : \psi \rightsquigarrow \phi$ is an $\infty$-morphism $g \circledcirc f :  \varphi  \rightsquigarrow \phi$.
		\item If $f^{(0)}$ is an isomorphism, $f$ admits a unique inverse with respect to $\circledcirc$~, denoted $f^{-1}$~. 
		\item The set of $\infty$-isotopies $\Gamma_A$ forms a group with respect to $\circledcirc$. 
	\end{enumerate}	
\end{theorem}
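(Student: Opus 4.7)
The plan is to verify the four statements in order, using the coassociativity of $\Delta$ and the compatibility of the left and right actions $\rhd$, $\lhd$ with the convolution composition $\circledcirc$. Point (1) is essentially a definition, but requires checking that the partial composition coming from $\Delta : \C \to \C \boxtimes \C$ converges in the complete hom-space; this follows because the filtration $\mathcal{F}^n \C$ is exhaustive and only finitely many terms of $\Delta$ contribute at each filtration stage.

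For Point (2), I would compute
\[ d(g \circledcirc f) - (g \circledcirc f) \rhd \varphi + \phi \lhd (g \circledcirc f) \]
directly. Applying the Leibniz rule together with the hypotheses $d f = f \rhd \varphi - \psi \lhd f$ and $d g = g \rhd \psi - \phi \lhd g$, the expression should reduce, via coassociativity of $\Delta$, to two symmetric terms $(g \rhd \psi) \circledcirc f$ and $g \circledcirc (\psi \lhd f)$ that cancel because they both encode the same three-level decomposition of $\C$ with $\psi$ inserted in the middle layer. What remains after the cancellation is exactly the required identity defining $g \circledcirc f$ as an $\infty$-morphism $\varphi \rightsquigarrow \phi$.

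For Point (3), I would construct $f^{-1}$ by induction along the exhaustive filtration. At the bottom, set $(f^{-1})^{(0)} \coloneqq (f^{(0)})^{-1}$ in $\End_A^B$. Having defined $f^{-1}$ modulo $\mathcal{F}^n \C$, the equation $f^{-1} \circledcirc f = \eta \circ \epsilon$ imposes on the next filtration weight an affine equation in the unknown whose leading linear coefficient is pre- and post-composition by $(f^{(0)})^{-1}$, and is therefore uniquely solvable. The same argument produces a right inverse, and the associativity of $\circledcirc$ established in Point (2) forces the left and right inverses to coincide, yielding uniqueness. Point (4) is then formal: the composite $\C \xrightarrow{\epsilon} \mathcal{I} \to \End_A$ is a two-sided identity by the counit axiom; associativity follows from coassociativity of $\Delta$; $\Gamma_A$ is closed under $\circledcirc$ because $(g \circledcirc f)^{(0)} = g^{(0)} \circ f^{(0)} = \id_A$ whenever $g^{(0)} = f^{(0)} = \id_A$; and invertibility is supplied by Point (3).

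I expect Point (2) to be the main obstacle: spelling out the cancellation requires tracking the combinatorics of connected directed graphs governing $\boxtimes$ and the infinitesimal decomposition $\Delta_{(1,1)}$. For operads this reduces to tree combinatorics and is standard; for properads one must verify that the relevant three-level decomposition of $\Delta$ is genuinely coassociative on connected graphs and that no new contribution arises from graphs with several inputs and outputs — a bookkeeping exercise that is notationally heavy but conceptually routine.
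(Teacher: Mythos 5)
This statement is not proved in the paper at all: it is imported verbatim from \cite[Proposition~3.19 and Theorem~3.22]{PHC}, so there is no in-paper argument to compare against. Your reconstruction is nevertheless essentially the standard proof, and it is consistent with the computational toolkit the paper itself develops afterwards: your Point (2) is exactly the combination of $d(g \circledcirc f) = (g ; d(g)) \circledcirc f + g \circledcirc (f; d(f))$ (Lemma \ref{cal2}(1)) with the identities $(g \circledcirc f) \rhd \varphi = g \circledcirc (f; f \rhd \varphi)$ and $\phi \lhd (g \circledcirc f) = (g; \phi \lhd g) \circledcirc f$ (Lemma \ref{cal}(4)--(5)), after which the two leftover terms cancel; note only that these are $(g; g \rhd \psi) \circledcirc f$ and $g \circledcirc (f; \psi \lhd f)$ in the paper's linearized notation, not $(g \rhd \psi) \circledcirc f$ and $g \circledcirc (\psi \lhd f)$ --- both are the three-level decomposition with a single $\psi$ in the middle layer, equal by coassociativity of $\Delta$. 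For Point (3) your filtration induction is the right mechanism, but the induction must run along the coradical filtration of the \emph{conilpotent} coproperad $\C$ (conilpotence is what guarantees that $\Delta$ on $\mathcal{F}^n \overline{\C}$ contributes, besides the two primitive terms, only pieces already determined at earlier stages), and the linear coefficient to invert for a left inverse is post-composition of the inputs by $(f^{(0)})^{\otimes n}$ alone, not simultaneous pre- and post-composition; since $f^{(0)}$ is an isomorphism this is still invertible, and uniqueness of the two-sided inverse follows from associativity of $\circledcirc$, which comes from coassociativity of $\Delta$ and associativity of $\gamma$ rather than from Point (2). With those two small corrections your argument goes through and reproduces the proof of \cite{PHC}.
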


\noindent By \cite{graphexp}, the group of $\infty$-isotopies $\Gamma_A$ is isomorphic to the gauge group of the dg Lie admissible algebra $\mathfrak{g}_A$.

\begin{theorem}[{\cite[Theorem~2.24]{graphexp}}]\label{graphexp2}
If $R$ is a $\mathbb{Q}$-algebra, the gauge group associated to $\mathfrak{g}_A$ and the group $\Gamma_A$ are isomorphic though the \emph{graph exponential/logarithm maps}, \[\mathrm{exp} : ((\mathfrak{g}_A)_0, \mathrm{BCH}, 0) \cong (\Gamma_A, \circledcirc, 1) : \mathrm{log}\ . \] 
\end{theorem}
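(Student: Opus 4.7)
The plan is to construct $\exp$ and $\log$ explicitly as formal power series with respect to the associative composition $\circledcirc$ introduced in Theorem \ref{invertibility prop}, and then to verify that they are mutually inverse bijections intertwining the two group laws. First I would define the exponential. Given $\lambda \in (\mathfrak{g}_A)_0$, let $\hat{\lambda} : \C \to \End_A$ be the extension of $\lambda$ that vanishes on $\mathcal{I}$, and let $\eta : \C \to \End_A$ denote the unit for $\circledcirc$ (sending $\mathcal{I}$ to $\mathrm{id}_A$ and vanishing on $\overline{\C}$). Set
\[
  \exp(\lambda) \;:=\; \eta + \sum_{n \geqslant 1} \frac{1}{n!}\, \hat{\lambda}^{\circledcirc n}\, .
\]
The crucial observation I would check first is that $\hat{\lambda}^{\circledcirc n}$, restricted to $\overline{\C}$, factors through the iterated decomposition map $\Delta^{n-1} : \overline{\C} \to \overline{\C}^{\boxtimes n}$ and hence lies in $\mathcal{F}^n \mathfrak{g}_A$. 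The series therefore converges in the complete filtration, and one reads off directly that $\exp(\lambda)^{(0)} = \mathrm{id}_A$, so $\exp(\lambda) \in \Gamma_A$.

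Next I would define the logarithm by the analogous classical series
\[
  \log(f) \;:=\; \sum_{n \geqslant 1} \frac{(-1)^{n-1}}{n}\, (f - \eta)^{\circledcirc n}\, ,
\]
convergence following from the same filtration argument since $f - \eta$ vanishes on $\mathcal{I}$ for $f \in \Gamma_A$. That $\exp$ and $\log$ are mutually inverse then reduces to the purely formal identity $\exp \circ \log = \mathrm{id} = \log \circ \exp$ valid in any complete filtered unital associative algebra over $\mathbb{Q}$; it is enough to verify it modulo $\mathcal{F}^n$ for each $n$, where the problem becomes finite-dimensional and polynomial.

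The main step, and the one I expect to be the most delicate, is showing that $\exp$ converts the Baker--Campbell--Hausdorff product on $(\mathfrak{g}_A)_0$ into the composition $\circledcirc$ on $\Gamma_A$, i.e.\
\[
  \exp\bigl(\mathrm{BCH}(\lambda,\mu)\bigr) \;=\; \exp(\lambda) \circledcirc \exp(\mu)\, .
\]
To establish this, I would first verify that the associative product $\circledcirc$ on $\mathrm{Hom}_{\mathbb{S}}(\C,\End_A)$ induces, after skew-symmetrization of its restriction to zero-component-free elements, exactly the Lie bracket arising from the Lie-admissible product $\star$ on $\mathfrak{g}_A$. This is a direct unpacking of the full decomposition map $\Delta$ of $\C$ into its infinitesimal piece $\Delta_{(1,1)}$ plus higher-order pieces, which contribute only to the non-antisymmetric part of $\circledcirc$. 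Once the two brackets are identified, the classical BCH identity $e^{\lambda}e^{\mu} = e^{\mathrm{BCH}(\lambda,\mu)}$ valid in the completed free associative algebra on two generators in characteristic zero transfers to our setting term by term. The main technical obstacle is precisely this compatibility between $\Delta_{(1,1)}$ and the iterated $\Delta$: one must check that the degree-two term of $\circledcirc$ recovers $\star$ after symmetrization, which genuinely relies on $\mathrm{char}(R) = 0$ to separate and invert the relevant combinatorial factors. With this in hand, all remaining verifications reduce to formal power series manipulations in the completed convolution algebra.
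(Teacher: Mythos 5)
Your argument has a genuine gap at its foundation: you treat $\bigl(\Hom_{\mathbb{S}}(\C,\End_A),\circledcirc\bigr)$ as a complete filtered unital \emph{associative algebra} and then invoke formal power-series identities ($\exp\circ\log=\mathrm{id}$, $e^{\lambda}e^{\mu}=e^{\mathrm{BCH}(\lambda,\mu)}$) that live in the completed free associative algebra. But $\circledcirc$ is not bilinear. By definition $g\circledcirc f=\gamma\circ(g\boxtimes f)\circ\Delta$, so $f$ (and, in the properadic setting, $g$ as well) gets inserted into \emph{every} vertex of one level of a two-level graph decomposition of $\C$; consequently $(f_1+f_2)\circledcirc g\neq f_1\circledcirc g+f_2\circledcirc g$ in general. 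Without distributivity you cannot expand $\exp(\lambda)\circledcirc\exp(\mu)$ as a double sum, cannot resum $\sum_n\frac{(-1)^{n-1}}{n}(f-\eta)^{\circledcirc n}$ against the exponential series, and cannot ``transfer term by term'' any identity from the free associative algebra. Your reduction of the BCH compatibility to identifying the skew-symmetrization of the ``degree-two term'' of $\circledcirc$ with the bracket of $\star$ addresses only the infinitesimal layer; the higher-order terms of $\circledcirc$ are governed by graph combinatorics, not by iterated bilinear multiplication, and this is precisely where the work lies. A concrete symptom: the coefficient that $\eta+\sum_n\frac{1}{n!}\hat{\lambda}^{\circledcirc n}$ assigns to a three-vertex graph with one vertex above two incomparable ones is $\frac{1}{2!}+\frac{2}{3!}=\frac{5}{6}$ (summing over its levelizations), which already signals that your series is not the graph exponential of the cited reference and that its group-morphism property cannot be taken for granted.

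The paper itself does not reprove this statement --- it is imported from \cite[Theorem~2.24]{graphexp} --- but the sketch it gives elsewhere (in the proof of Proposition \ref{obstru bis}) shows the actual argument is of a different nature: the graph exponential is defined as a weighted sum over connected \emph{directed simple graphs} with vertices labelled by $\lambda$ (not over levelized composites), and the logarithm is not given by a closed power series but is constructed by induction on the weight, proving surjectivity of $\exp$ one filtration step at a time by correcting with finite sums of graphs with at most $k$ vertices. To repair your proposal you would need either to prove that your levelized series coincides with that graph exponential (a nontrivial combinatorial identity you have not stated), or to abandon the associative-algebra formalism and run the weight-induction directly, verifying the morphism property $\exp(\mathrm{BCH}(\lambda,\mu))=\exp(\lambda)\circledcirc\exp(\mu)$ modulo each $\mathcal{F}^k$ by an explicit analysis of how $\circledcirc$ acts on graphs.
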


\subsection{The adjoint action of $\infty$-isomorphisms}\label{2.3}

In characteristic zero, the group of $\infty$-isotopies $\Gamma_A$ acts on the set of $\Omega \C$-algebra structures on $A$ via the gauge action and the isomorphism established in Theorem \ref{graphexp2}. The goal of this section is to generalize this result to an arbitrary coefficient ring by making explicit the action of $\infty$-isomorphisms on $\Omega \C$-algebra structures in the properadic case.

\begin{notation}
	Let $(A,\varphi)$ and $(B,\psi)$ be two $\Omega \C $-algebra structures. Let us fix the  elements
	\[  f, \ell  : \C \to \mathrm{End}_B^A ,\; g , h : \C \to \mathrm{End}_C^B,  \; k : \C \to \mathrm{End}_A^E, \; \mbox{and} \; x,y : \C \to \mathrm{End}_D^C   \ .   \]  Let us recall the notation $g \circledcirc (f; \ell)$ of \cite{PHC} for $g$ applied everywhere at the bottom level and $f$ applied everywhere at the top level except at one place where $\ell$ is applied. The operation $(g ; h)   \circledcirc f$ is defined similarly. By definition, the following equalities hold \[\psi \lhd f = \left(1 ; \psi  \right)\circledcirc f, \quad  f \rhd \varphi = f \circledcirc \left(1 ; \varphi \right), \quad  x \star y = (1; x) \circledcirc \left(1 ; y \right) \ . \] 
\end{notation}

\begin{definition}[Adjoint action]
Let $f : \C \to \mathrm{End}_B^A $ of degree zero be such that $f^{(0)}$ is an isomorphism. For all $\varphi \in \mathfrak{g}_A$, let us denote  
\[ f \cdot \varphi \coloneqq \mathrm{Ad}_f \left(  \varphi  \right) -  \left(f ;d(f) \right) \circledcirc f^{-1} \ , \] where $\mathrm{Ad}_f \left( \varphi  \right) \coloneqq (f ; f \rhd  \varphi) \circledcirc f^{-1}$.  
\end{definition}

\noindent The goal of this section is to establish that $f \cdot x$ defines an action of $\infty$-isomorphisms on $\Omega \C$-structures. To this aim, we start by establishing the compatibility relations satisfied by the operations at the level of the space $\Hom_{\mathbb{S}} \left(\C, \mathrm{End}_B^A\right)$.

\begin{lemma}\label{cal}  The following identities hold
	\begin{enumerate}
		\item $(x;y) \circledcirc (g \circledcirc f) = (x \circledcirc g ; (x;y) \circledcirc g) \circledcirc f$ 
		\item $ (x \circledcirc g ; (x; y) \circledcirc g) \circledcirc (f ; \ell) = (x; y)  \circledcirc (g \circledcirc f ; g \circledcirc (f; \ell))$
		\item $ (g  \circledcirc f ; g  \circledcirc (f ; \ell) ) \circledcirc k = g \circledcirc (f \circledcirc k ;  (f; \ell) \circledcirc k)$
		\item $ \psi \lhd \left(f \circledcirc k \right) = \left(f ; \psi \lhd f \right) \circledcirc k $
		\item $  \left(g \circledcirc f \right) \rhd \varphi = g \circledcirc \left( f ; f \rhd \varphi \right) $ 
	\end{enumerate}
	\noindent If furthermore, the elements $f$ and $g$ are invertible with respect to $\circledcirc$, then
	\begin{enumerate}
		\item[$(6)$] $\left((f ; \ell) \circledcirc f^{-1} \right) \lhd f =  \ell    $     
		\item[$(7)$] $ \mathrm{Ad}_g \left(   (f;\ell) \circledcirc f^{-1} \right)  = \left(g \circledcirc f ; g \circledcirc (f; \ell)\right) \circledcirc (f^{-1} \circledcirc g^{-1} )   $
		\item[$(8)$] $ (f; \ell) \circledcirc (f^{-1}; \ell') = \left((f; \ell) \circledcirc f^{-1} \right) \star \left( f \circledcirc (f^{-1}; \ell')  \right)$  
		\item[$(9)$] $\mathrm{Ad}_f \left( \varphi  \right) = f \circledcirc \left(f^{-1}; \varphi \lhd f^{-1}\right) $.
	\end{enumerate}
\end{lemma}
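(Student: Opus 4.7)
The plan is to interpret each operation $\circledcirc$, $(x;y)\circledcirc f$, $g\circledcirc(f;\ell)$, $\lhd$, $\rhd$, and $\star$ graphically, as a sum over $2$-level (or $3$-level) decorated graphs obtained by first applying iterated decomposition maps $\Delta$, $\Delta_{(1,*)}$, $\Delta_{(*,1)}$, $\Delta_{(1,1)}$ on the source coproperad $\C$, then filling each layer with one of the given morphisms, and finally composing the outputs via the properadic product $\gamma$ (and its infinitesimal version $\gamma_{(1,1)}$) of $\End$. Each identity then amounts to showing that two such graph-sums coincide; this will follow by combining the coassociativity and compatibility axioms of $\C$ (which equate different iterated decompositions) with the associativity of $\gamma$ in $\End$.

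For identities (1)--(5), which do not involve inverses, the argument is purely structural. Expanding both sides using the explicit formulas produces sums indexed by $3$-level decompositions of $\C$ in which exactly one internal edge is distinguished (the position of the one ``infinitesimal'' slot). The coassociativity of $\Delta$ together with the compatibility between $\Delta$ and the projection $\C\boxtimes\C\twoheadrightarrow\overline\C\underset{(1,1)}{\boxtimes}\overline\C$ identifies the two orders of decomposition, and associativity of the properadic product in $\End$ does the rest. I would carry out the verification of (1) in detail and then indicate how (2)--(5) follow by the same bookkeeping, since (4) and (5) are the special cases of (2) and (1) in which one of the entries is a pure $\lhd$ or $\rhd$ action.

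Identities (6)--(9) use crucially that $f\circledcirc f^{-1}=\mathrm{id}_\C\to\mathrm{End}_A^A$ and the analogous relation for $g$, so that at each decomposition level the pair $(f,f^{-1})$ (resp.\ $(g,g^{-1})$) collapses the sum to the summand supported on the counit $\epsilon\colon\overline\C\to0$ in all but the distinguished slot. Concretely, for (6) I would apply (2) to rewrite $((f;\ell)\circledcirc f^{-1})\lhd f$ as a sum involving $f\circledcirc f^{-1}$ and $f\circledcirc(f^{-1};\ell')$--type terms, and then observe that only the slot carrying $\ell$ survives. Identity (7) is a direct consequence of (3) combined with $\mathrm{Ad}_g=(g;g\rhd(-))\circledcirc g^{-1}$, after one invocation of (6)-style cancellation. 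For (9), the inversion relation gives $\varphi=(f^{-1};\varphi\lhd f^{-1})\circledcirc f$ by (6), and substituting this into the definition of $\mathrm{Ad}_f(\varphi)$ and applying (1) yields the claimed form.

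The main obstacle is identity (8). Here the left-hand side is a \emph{two-place} modification in both the top and the bottom layer of a $2$-level graph, but the right-hand side expresses it as a $\star$-product of two \emph{one-place/one-place} modifications. The key point is that, when the two modified slots on top are composed with the two modified slots on the bottom via the properadic product, the sum naturally decomposes according to the relative position of $\ell$ and $\ell'$ in the resulting graph: the contributions where $\ell$ sits above $\ell'$ assemble into $((f;\ell)\circledcirc f^{-1})\star(f\circledcirc(f^{-1};\ell'))$ via the infinitesimal decomposition $\Delta_{(1,1)}$, and no diagonal term appears because each of $\ell$ and $\ell'$ occupies a single slot. Making this splitting rigorous requires the most careful bookkeeping of the combinatorics of $\Delta_{(1,1)}$ and of the infinitesimal composition $\gamma_{(1,1)}$, and this is the step I expect to be the actual work.
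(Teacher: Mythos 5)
Your proposal is correct in substance and, for items (1)--(7) and (9), essentially coincides with the paper's proof: the paper establishes (1)--(3) from the coassociativity of the decomposition map, recorded as explicit isomorphisms between the relevant three-level $\mathbb{S}$-bimodule decompositions (your graphical bookkeeping, made precise), obtains (4) and (5) as the specializations you indicate, and deduces (6), (7), (9) formally from (1)--(3) together with $f \circledcirc f^{-1} = 1$ and the connectedness of the coproperad. (Minor routing differences: the paper gets (6) from (1) rather than (2) --- in $(\,\cdot\,)\lhd f$ the distinguished slot sits at the level to which (1), not (2), applies --- and (9) from (3) rather than from (6) and (1); your alternatives also work.) The one genuine divergence is (8), which you single out as the hard step and propose to prove by a direct combinatorial analysis of the relative positions of $\ell$ and $\ell'$. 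In the paper this identity is another purely formal consequence of (1) and (2): applying (2) with $(x;y) = \left(1 ; (f;\ell)\circledcirc f^{-1}\right)$, bottom morphism $f$, top morphism $f^{-1}$ and slot $\ell'$ identifies the right-hand side of (8) --- via the definition $a \star b = (1;a)\circledcirc(1;b)$ and $f\circledcirc f^{-1}=1$ --- with $\left(\left(1 ; (f;\ell)\circledcirc f^{-1}\right)\circledcirc f\right)\circledcirc (f^{-1};\ell')$, after which (1) collapses the inner factor back to $(f;\ell)$. The cancellation of the ``diagonal'' configurations that you anticipate having to control by hand is thus already encoded in (1) and (2), so the step you flag as ``the actual work'' can be bypassed entirely; your direct argument would likely go through but is considerably more laborious.
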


\begin{proof} \leavevmode
	\begin{enumerate}
		\item The formula follows from the coassociativity of the decomposition map $\Delta$ and the following isomorphism \[\left( \C ; \C  \right) \boxtimes \left( \C \boxtimes \C \right) \cong \left( \C \boxtimes \C ; \left( \C ; \C  \right) \boxtimes \C \right) \boxtimes \C \ . \] 
		\item The formula follows from the coassociativity of the decomposition map $\Delta$ and the following isomorphism \[\left(\C   \boxtimes \C ; \left( \C ; \C  \right) \boxtimes \C \right) \boxtimes \left( \C ; \C \right) \cong \left( \C ; \C  \right) \boxtimes \left( \C \boxtimes \C ;\C \boxtimes \left( \C ; \C  \right)  \right)  \ . \] 
		\item The formula follows from the coassociativity of the decomposition map $\Delta$ and the following isomorphism \[\left( \C \boxtimes \C ; \C \boxtimes \left(\C ; \C \right)  \right) \boxtimes \C \cong \C \boxtimes \left( \C \boxtimes \C ; \left( \C ; \C  \right) \boxtimes \C \right)  \ . \] 
		\item This follows from (1) by considering $x = 1$ and $y = \psi$. 
		\item This follows from (2) by considering $x =y =1$, $g=f$, $f=1$ and $\ell = \varphi$. 

		\item By formula (1) with $x = f$, $y = \ell$ and $g = f^{-1}$, we have  \[\left((f ; \ell) \circledcirc f^{-1} \right) \lhd f = \left(1; (f ; \ell) \circledcirc f^{-1}  \right) \circledcirc f \overset{(1)}{=} (f ; \ell) \circledcirc 1 \ . \] Since all operations in a coproperad are connected, we have $ (f ; \ell) \circledcirc 1 = \ell$.  
\item By formula (1) with $x = g \circledcirc f$, $y = g \circledcirc (f ; \ell )$, $g =  f^{-1}$ and $f = g^{-1}$, we have \[\left(g \circledcirc f ; g \circledcirc (f; \ell)\right) \circledcirc \left(f^{-1} \circledcirc g^{-1} \right) \overset{(7)}{=} \left(g ; \left(g \circledcirc f ; g \circledcirc (f ; \ell ) \right) \circledcirc  f^{-1} \right) \circledcirc  g^{-1} \ . \] The result follows from formula (3) with $k= f^{-1}$ since \[\left(g \circledcirc f ; g \circledcirc (f ; \ell ) \right) \circledcirc  f^{-1} \overset{(3)}= g \circledcirc \left(1 ; (f;\ell) \circledcirc f^{-1} \right) = g \rhd \left( (f;\ell) \circledcirc f^{-1} \right) \ . \]
\item By formula (2) with $(x;y) = \left(1 ; (f; \ell) \circledcirc f^{-1} \right)$, $g = f$, $f = f^{-1}$ and $\ell = \ell'$, we have 
\[ \left(1 ; (f; \ell) \circledcirc f^{-1} \right) \circledcirc \left(1 ; f \circledcirc (f^{-1}; \ell')  \right) \overset{(2)}{=} \left(f ; \left( (f; \ell) \circledcirc f^{-1} \right) \lhd f \right) \circledcirc (f^{-1}, \ell' )  \ .\] Formula (6) yields the desired formula. 
\item This follows from (3) with $g = f$, $f = 1$, $\ell = \varphi$ and $k = f^{-1}$.  \qedhere
	\end{enumerate}
\end{proof}

\begin{lemma}\label{cal2}
	If $f$ and $g$ are of degree $0$, the following identities hold:
	\begin{enumerate}
		\item $d (g \circledcirc f) = (g ; d(g)) \circledcirc f +  g \circledcirc (f; d(f)) $;
		\item $ (f ; d(f)) \circledcirc f^{-1} = - f \circledcirc \left(f^{-1}, d \left(f^{-1}\right)\right)$.
	\end{enumerate}  
\end{lemma}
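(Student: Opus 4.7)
The plan is to read identity (1) as a graded Leibniz rule for the composition $\circledcirc$, and to extract identity (2) from (1) applied to $f$ and its inverse.

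For (1), I would unfold the composition as in Theorem \ref{invertibility prop}:
\[ g \circledcirc f \;=\; \gamma \circ (g \boxtimes f) \circ \Delta, \]
where $\Delta : \C \to \C \boxtimes \C$ is the decomposition map of the dg coproperad $\C$ and $\gamma : \End_C^B \boxtimes \End_B^A \to \End_C^A$ is the composition map of the dg properad $\End$. Both $\Delta$ and $\gamma$ are chain maps, so that
\[ d(g \circledcirc f) \;=\; \gamma \circ d(g \boxtimes f) \circ \Delta. \]
The differential on the box product is the standard graded derivation with respect to the vertex decorations of the underlying two-level connected directed graphs. Since $|g| = |f| = 0$, all Koszul signs are trivial. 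Separating contributions according to whether the differential hits a bottom vertex (decorated by $g$) or a top vertex (decorated by $f$) then yields, by the very definitions of the notations $(-;-) \circledcirc -$ and $- \circledcirc (-;-)$ recalled at the start of the section, the two summands $(g; d(g)) \circledcirc f$ and $g \circledcirc (f; d(f))$.

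For (2), Theorem \ref{invertibility prop} guarantees the existence of an inverse $f^{-1}$ with $f \circledcirc f^{-1} = 1_B$, where $1_B : \C \to \End_B^B$ is the unit of $\circledcirc$ (the $\infty$-isotopy concentrated on the coaugmentation $\mathcal{I} \hookrightarrow \C$). As $1_B$ factors through the coaugmentation, which is a map of complexes, one has $d(1_B) = 0$. Specializing (1) to the pair $(f, f^{-1})$ therefore gives
\[ 0 \;=\; d(f \circledcirc f^{-1}) \;=\; (f; d(f)) \circledcirc f^{-1} \;+\; f \circledcirc (f^{-1}; d(f^{-1})), \]
which rearranges to the desired equality.

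The only nontrivial point is the graded Leibniz rule for $\boxtimes$ at the level of dg $\mathbb{S}$-bimodules, which is part of the standard properadic dictionary and is built into the differential used throughout \cite{PHC}. With the signs trivial because $|f| = |g| = 0$, and with $\Delta$ and $\gamma$ commuting with $d$, the two identities follow from a direct inspection of the definitions.
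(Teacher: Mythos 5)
Your proof is correct and follows essentially the same route as the paper: identity (1) is the statement that $d$ is a (co)derivation with respect to $\circledcirc$, which you justify by unfolding $g\circledcirc f=\gamma\circ(g\boxtimes f)\circ\Delta$ and using that $\gamma$ and $\Delta$ are chain maps, and identity (2) follows by applying (1) to the composite of $f$ with its inverse. Your choice of differentiating $f\circledcirc f^{-1}=1$ lines up exactly with the form of identity (2) as stated, so nothing is missing.
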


\begin{proof}
	The first point follows from the fact that $d$ is a coderivation. The second point follows from applying (1) to $f^{-1} \circledcirc f = 1$. 	
\end{proof}

\begin{proposition}\label{Adf properadic0}
 Let $f : \C \to \mathrm{End}_B^A $ of degree zero be such that $f^{(0)}$ is an isomorphism. 
	\begin{enumerate}
		\item  The element $f$ defines an $\infty$-isomorphism $\varphi \rightsquigarrow \psi$ if and only if we have $ f \cdot \varphi = \psi$~.
		\item For all $g : \C \to \End_C^B $ of degree zero such that $g^{(0)}$ is an isomorphism, we have \[\mathrm{Ad}_{g \circledcirc f} =  \mathrm{Ad}_g \circ \mathrm{Ad}_f \quad \mbox{and} \quad  g \cdot (f \cdot \varphi) = (g \circledcirc f) \cdot \varphi \ .\]
		\item The following equality holds \begin{equation*}\label{truand}
			d \left( \mathrm{Ad}_{f} (\varphi) \right) =   	\mathrm{Ad}_{f} (d  (\varphi)) + \left[ \left(f ;d(f) \right) \circledcirc f^{-1} , \mathrm{Ad}_{f} (\varphi) \right]  \ .
		\end{equation*}
		\item The element $f \cdot \varphi$ defines an $\Omega \C $-algebra structure on $B$. 
		\item The group $\Gamma_A$ acts on the set of $\Omega \C $-algebra structures on $A$ under $f \cdot \varphi$. 
	\end{enumerate}
\end{proposition}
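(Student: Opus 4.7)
The plan is to prove the five assertions in order, with parts (4) and (5) building on (1)--(3). The main engine throughout will be the compatibility identities of Lemma \ref{cal} and Lemma \ref{cal2}.

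For (1), I would unfold the definition: $f \cdot \varphi = \psi$ rewrites as $(f; f \rhd \varphi - d(f)) \circledcirc f^{-1} = \psi$. Applying the operation $\lhd f$ on both sides and using identity (6) of Lemma \ref{cal}, which gives $((f;\ell) \circledcirc f^{-1}) \lhd f = \ell$, collapses the left-hand side to $f \rhd \varphi - d(f)$. The resulting equation $f \rhd \varphi - \psi \lhd f = d(f)$ is exactly the defining relation of an $\infty$-morphism $\varphi \rightsquigarrow \psi$; since $f^{(0)}$ is invertible, the operation $\lhd f$ is injective, so the implication goes both ways.

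For (2), the identity $\mathrm{Ad}_{g \circledcirc f} = \mathrm{Ad}_g \circ \mathrm{Ad}_f$ follows by unfolding: combine identity (5) of Lemma \ref{cal}, which yields $(g \circledcirc f) \rhd \varphi = g \circledcirc (f; f \rhd \varphi)$, with identity (7), which identifies $\mathrm{Ad}_g((f; \ell) \circledcirc f^{-1})$ with $(g \circledcirc f; g \circledcirc (f;\ell)) \circledcirc (f^{-1} \circledcirc g^{-1})$, taking $\ell = f \rhd \varphi$. For the compatibility of the action, combine this with Lemma \ref{cal2}(1) to expand $d(g \circledcirc f) = (g; d(g)) \circledcirc f + g \circledcirc (f; d(f))$, and again use identity (7) to push $\mathrm{Ad}_g$ through the correction term $(f;d(f)) \circledcirc f^{-1}$. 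Part (3) is a Leibniz computation applied to $\mathrm{Ad}_f(\varphi) = (f; f \rhd \varphi) \circledcirc f^{-1}$ via Lemma \ref{cal2}(1), using Lemma \ref{cal2}(2) to rewrite $(f; d(f)) \circledcirc f^{-1}$ as $-f \circledcirc (f^{-1}; d(f^{-1}))$; reorganising the resulting terms by identities (1)--(5) of Lemma \ref{cal} puts them into the form $\mathrm{Ad}_f(d\varphi) + [(f; d(f)) \circledcirc f^{-1}, \mathrm{Ad}_f(\varphi)]$.

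Part (4) is the key computational step: given $\varphi \in \mathrm{MC}(\mathfrak{g}_A)$, set $\psi \coloneqq f \cdot \varphi$ and verify $d(\psi) + \psi \star \psi = 0$. Use (3) to compute $d(\mathrm{Ad}_f(\varphi))$, apply Leibniz and $d^2 = 0$ to expand $d((f;d(f)) \circledcirc f^{-1})$, and expand $\psi \star \psi$ using identity (8) of Lemma \ref{cal}, which decomposes a star product of two $(\cdots) \circledcirc f^{\pm 1}$ terms. Invoking the Maurer--Cartan equation for $\varphi$ then cancels the remaining terms. Finally, (5) is immediate: (4) shows the action preserves $\mathrm{MC}(\mathfrak{g}_A)$, (2) gives compatibility with composition in $\Gamma_A$, and the identity $\mathrm{id}_{\End_A}$ acts trivially since $\mathrm{id} \rhd \varphi = \varphi$, $\varphi \lhd \mathrm{id} = \varphi$ and $d(\mathrm{id}) = 0$. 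The main obstacle is part (4): keeping track of the Leibniz signs and organising the large number of terms produced by identity (8) so that the Maurer--Cartan equation for $\varphi$ can be invoked cleanly; part (3) is conceptually straightforward but requires the same kind of careful bookkeeping.
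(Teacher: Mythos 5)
Your overall route through parts (1), (2), (3) and (5) is the paper's: both directions of (1) via the mutually inverse operators $\lhd f$ and $(f;-)\circledcirc f^{-1}$ (identities (4) and (6) of Lemma \ref{cal}), part (2) via identities (5) and (7) together with Lemma \ref{cal2}(1), part (3) as a Leibniz computation, and part (5) from (2), (4) and $d(1)=0$. (Small bookkeeping point: the reorganisation in part (3) runs on identities (8) and (9) of Lemma \ref{cal}, not on (1)--(5); the method is the same.)

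There is, however, a genuine gap in part (4), which you yourself single out as the key step. Writing $\psi\coloneqq f\cdot\varphi=\mathrm{Ad}_f(\varphi)-(f;d(f))\circledcirc f^{-1}$, identity (8) of Lemma \ref{cal} does take care of the two cross terms of $\psi\star\psi$ (they assemble into $-\left[(f;d(f))\circledcirc f^{-1},\mathrm{Ad}_f(\varphi)\right]$) and of the square of the correction term (which becomes $d\left((f;d(f))\circledcirc f^{-1}\right)$ via Lemma \ref{cal2}(2) and $d^2=0$). But identity (8) applied to the remaining term $\mathrm{Ad}_f(\varphi)\star\mathrm{Ad}_f(\varphi)$ only yields $(f;f\rhd\varphi)\circledcirc(f^{-1};\varphi\lhd f^{-1})$, and this is not yet of the form $\mathrm{Ad}_f(\varphi\star\varphi)$, so the Maurer--Cartan equation of $\varphi$ cannot be invoked at this stage. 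The missing ingredient is the identity $\mathrm{Ad}_f(\varphi)\star\mathrm{Ad}_f(\varphi)=\mathrm{Ad}_f(\varphi\star\varphi)$, which does not follow from Lemmas \ref{cal} and \ref{cal2}: the paper derives it from the separate combinatorial observation $f\rhd(\varphi\star\varphi)=(f;f\rhd\varphi)\rhd\varphi$ (and its vertical mirror for $\lhd$) taken from the proof of \cite[Theorem 5.1]{PHC}, combined with identities (4) and (6). This properadic graph-counting identity --- the statement that the only graphs surviving in $(f;f\rhd\varphi)\rhd\varphi$ are those where the two $\varphi$-vertices sit one above the other --- is an additional input your plan needs to name; without it the cancellation you describe does not close.
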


\begin{proof} 		
	Let us prove the point (1). If $f$ defines an $\infty$-isomorphism $\varphi \rightsquigarrow \psi$, we have \begin{equation}\label{infini}
		f 	\rhd \varphi -  \psi	\lhd f = d(f) \  .
	\end{equation}  Applying the operator $\left(f, -\right) \circledcirc f^{-1}$ on both sides leads to \[f 	\cdot  \varphi = \left(f, \psi	\lhd f \right) \circledcirc f^{-1}   \  .\] Using formula $(4)$ of Lemma \ref{cal} with $k  = f^{-1}$, we have $f 	\cdot  \varphi = \psi$. Conversely, suppose that we have $f \cdot \varphi = \psi$. Applying the operator $\lhd f$ on both sides leads to equality (\ref{infini}), by formula (6) of Lemma \ref{cal}.  Let us prove the point (2). On the one hand, we have $\left(g \circledcirc f \right) \rhd \varphi = g \circledcirc \left( f ; f \rhd \varphi \right) $ by formula (5)  of Lemma \ref{cal}. Applying formula (7) of Lemma \ref{cal}, with  $\ell = f \rhd \varphi$, we obtain
		\[ \mathrm{Ad}_{g \circledcirc f}(\varphi) = \left(g \circledcirc f ; (g \circledcirc f)	\rhd \varphi  \right) \circledcirc \left( f^{-1} \circledcirc g^{-1} \right) = \mathrm{Ad}_g \circ \mathrm{Ad}_f (\varphi) \ . \] On the other hand, we have \[ (g \circledcirc f) \cdot \varphi =  \mathrm{Ad}_g \circ \mathrm{Ad}_f (\varphi)   - \left(g \circledcirc f ;  d(g \circledcirc f) \right) \circledcirc f^{-1} \circledcirc g^{-1} \ . \]  Applying formula (1) of Lemma \ref{cal2} leads to the equality \[ d(g \circledcirc f) =  (g ; d(g)) \circledcirc f +  g \circledcirc (f,d(f)) \ . \] By formula (1) of Lemma \ref{cal}, with $y = d(g)$, $x = g$, $g = f$, $f = f^{-1} \circledcirc g^{-1}$, we have	\[\left(g \circledcirc f ;  (g ; d(g)) \circledcirc f \right) \circledcirc f^{-1} \circledcirc g^{-1}  = \left(g ;d(g) \right) \circledcirc g^{-1} \ .\] Applying formula (7) of Lemma \ref{cal}, with $\ell = d(f)$, we have \[\left(g \circledcirc f ;   g \circledcirc (f,d(f)) \right) \circledcirc f^{-1} \circledcirc g^{-1}  =   \mathrm{Ad}_g  \left(\left(f ;d(f) \right) \circledcirc f^{-1}\right) \ .\] This gives the desired result since we have  \[
		g \cdot (f \cdot \varphi)  = \mathrm{Ad}_g \circ \mathrm{Ad}_f (\varphi)  -  \mathrm{Ad}_g  \left(\left(f ;d(f) \right) \circledcirc f^{-1}\right)  -  \left(g ;d(g) \right) \circledcirc g^{-1} \ . \]  Let us prove the point (3). Since $d$ is a coderivation, the following equality holds \[d \left( \mathrm{Ad}_f \left( \varphi  \right) \right) =   \mathrm{Ad}_f \left( d \left( \varphi  \right) \right) + (-1)^{\varphi}\left(f ; f \rhd \varphi \right) \circledcirc \left(f^{-1}; d(f^{-1})\right) + \left(f; d(f)  \right) \circledcirc \left(f^{-1}; \varphi \lhd f^{-1} \right) \ . \]   By formula (8) of Lemma \ref{cal} with $\ell = d(f)$ and $\ell' = \varphi \lhd f^{-1}$ and formula (9), we have \[\left(f; d(f)  \right) \circledcirc \left(f^{-1}; \varphi \lhd f^{-1} \right) = \left(\left(f ;d(f) \right) \circledcirc f^{-1}\right) \star \mathrm{Ad}_f \left(\varphi \right) \ .  \] Similarly, by formula (8) of Lemma \ref{cal} with $\ell = f \rhd \varphi$ and $\ell' = d(f^{-1})$ and formula (2) of Lemma \ref{cal2}, we have \[\left(f ; f \rhd \varphi \right) \circledcirc \left(f^{-1}; d(f^{-1})\right) = - \mathrm{Ad}_f \left(\varphi \right) \star \left(\left(f ;d(f) \right) \circledcirc f^{-1}\right) \ .  \] Let us prove the point (4), i.e. that $f \cdot \varphi$ satisfies the Maurer--Cartan equation that is \begin{equation}\label{MCeq}
			d(f \cdot \varphi) + (f \cdot \varphi) \star (f \cdot \varphi) = 0 \ .
		\end{equation}  By Point (3) of Proposition \ref{Adf properadic0}, the following equality holds \[d(f \cdot \varphi) = \mathrm{Ad}_{f} (d  (\varphi)) + \left[ \left(f ;d(f) \right) \circledcirc f^{-1} , \mathrm{Ad}_{f} (\varphi) \right]  - d \left(\left(f ;d(f) \right) \circledcirc f^{-1}  \right) \ . \] Since the differential squares to zero, we have  \[d \left(\left(f ;d(f) \right) \circledcirc f^{-1}  \right) = - \left(f ;d(f) \right) \circledcirc \left(  f^{-1}, d\left(f^{-1} \right)  \right) \ . \] By formula $(8)$ of Lemma \ref{cal} with $\ell = d(f)$ and $\ell' = d \left(f^{-1} \right)$ and formula (2) of Lemma \ref{cal2}, we have \[d \left(\left(f ;d(f) \right) \circledcirc f^{-1}  \right)  = \left( (f; d(f) ) \circledcirc f^{-1} \right) \star \left( (f; d(f)) \circledcirc f^{-1}\right) \ . \] On the other side, we have \[(f \cdot \varphi) \star (f \cdot \varphi) = \mathrm{Ad}_{f}(\varphi) \star \mathrm{Ad}_{f}(\varphi) - \left[ \left(f ;d(f) \right) \circledcirc f^{-1} , \mathrm{Ad}_{f} (\varphi) \right] + d \left(\left(f ;d(f) \right) \circledcirc f^{-1}  \right) \ .  \] As observed in the proof of \cite[Theorem 5.1]{PHC}, we have $ f \rhd (\varphi \star \varphi) = (f ; f \rhd \varphi ) \rhd \varphi $. Symmetrically, we also have \[\left( \mathrm{Ad}_{f}(\varphi) \star \mathrm{Ad}_{f}(\varphi) \right) \lhd f =  \mathrm{Ad}_{f}(\varphi) \lhd \left(f;  \mathrm{Ad}_{f}(\varphi) \lhd f \right) \overset{(6)}{=} \mathrm{Ad}_{f}(\varphi) \lhd \left(f;   f \rhd \varphi  \right)   \   \] which we simplify by using formula (6) of Lemma \ref{cal}. As also observed in the proof of \cite[Theorem 5.1]{PHC}, we have \[\mathrm{Ad}_{f}(\varphi) \lhd \left(f;   f \rhd \varphi  \right) = \left(f ; \mathrm{Ad}_{f}(\varphi) \lhd  f \right) \rhd \varphi \overset{(6)}{=} \left(f ; f \rhd \varphi \right) \rhd \varphi \ . \] Applying the operator $(f; -) \circledcirc f^{-1}$ on both sides and using formula (4) of Lemma \ref{cal} gives \[ \mathrm{Ad}_{f}(\varphi) \star \mathrm{Ad}_{f}(\varphi)   = \left( f; \left(f ; f \rhd \varphi \right) \rhd \varphi \right) \circledcirc f^{-1} = \mathrm{Ad}_{f}(\varphi \star \varphi)   \ .   \]  Equation (\ref{MCeq}) now follows from the Maurer--Cartan equation of $\varphi$. \noindent Let us now prove point (5). Applying formula (2) of Lemma \ref{cal2} with $f=g= 1$ leads to $d(1) = 0$. We get $1 \cdot \varphi = \varphi$. Together with Point (2) of Proposition \ref{Adf properadic0}, this proves that the group $\Gamma_A$ acts on the set of Maurer-Cartan elements $\mathrm{MC}(\mathfrak{g}_A)$ under $f \cdot \varphi$. 
\end{proof}

\begin{proposition}\label{Adf properadic}
	Let  $f : \varphi \rightsquigarrow \psi$ be an $\infty$-isomorphism. It induces an isomorphism  \[\mathrm{Ad}_f : \mathfrak{g}_A^{\varphi} \to \mathfrak{g}_B^{\psi}, \quad x \mapsto (f ; f \rhd  x) \circledcirc f^{-1} \ ,\] of dg Lie algebras  whose inverse is given by $\mathrm{Ad}_{f^{-1}}$~. 
\end{proposition}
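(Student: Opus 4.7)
The plan is to verify in succession that $\mathrm{Ad}_f$ is a graded $R$-linear bijection with inverse $\mathrm{Ad}_{f^{-1}}$, that it is compatible with the Lie bracket, and that it intertwines the twisted differentials $d^\varphi$ and $d^\psi$. The bijectivity is essentially formal. By Theorem~\ref{invertibility prop}, the $\infty$-isomorphism $f$ admits an inverse $f^{-1}$ with respect to $\circledcirc$ whose first component $(f^{-1})^{(0)}$ is again an isomorphism, so $\mathrm{Ad}_{f^{-1}}$ is well-defined. Applying Point~(2) of Proposition~\ref{Adf properadic0} to the identities $f^{-1} \circledcirc f = 1$ and $f \circledcirc f^{-1} = 1$ yields $\mathrm{Ad}_{f^{-1}} \circ \mathrm{Ad}_f = \mathrm{Ad}_{1} = \mathrm{id}$ and the symmetric equality.

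For compatibility with the Lie bracket, I would prove the stronger statement that $\mathrm{Ad}_f(x \star y) = \mathrm{Ad}_f(x) \star \mathrm{Ad}_f(y)$ for all $x, y \in \mathfrak{g}_A$. This is a direct generalization of the computation used in the proof of Point~(4) of Proposition~\ref{Adf properadic0} to establish $\mathrm{Ad}_f(\varphi \star \varphi) = \mathrm{Ad}_f(\varphi) \star \mathrm{Ad}_f(\varphi)$. That argument only invokes formulas~(4), (5), and~(6) of Lemma~\ref{cal}, together with the identity $f \rhd (u \star v) = (f; f \rhd u) \rhd v$ from \cite[Theorem~5.1]{PHC}. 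All these ingredients are bilinear in their two inputs and do not use the Maurer--Cartan equation, so the same manipulations yield $(\mathrm{Ad}_f(x) \star \mathrm{Ad}_f(y)) \lhd f = (f; f \rhd x) \rhd y$ and, after applying $(f; -) \circledcirc f^{-1}$, the desired equality. Skew-symmetrizing then gives compatibility with the Lie bracket.

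It remains to check that $\mathrm{Ad}_f \circ d^\varphi = d^\psi \circ \mathrm{Ad}_f$. By Point~(3) of Proposition~\ref{Adf properadic0},
\[ d(\mathrm{Ad}_f(x)) = \mathrm{Ad}_f(d(x)) + \bigl[(f; d(f)) \circledcirc f^{-1},\, \mathrm{Ad}_f(x)\bigr]. \]
Since $f$ is an $\infty$-isomorphism $\varphi \rightsquigarrow \psi$, Point~(1) of Proposition~\ref{Adf properadic0} gives $f \cdot \varphi = \psi$, which rearranges into $(f; d(f)) \circledcirc f^{-1} = \mathrm{Ad}_f(\varphi) - \psi$ by definition of the adjoint action. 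Substituting and using the bracket compatibility established above yields
\[ d^\psi(\mathrm{Ad}_f(x)) = d(\mathrm{Ad}_f(x)) + [\psi, \mathrm{Ad}_f(x)] = \mathrm{Ad}_f(d(x)) + [\mathrm{Ad}_f(\varphi), \mathrm{Ad}_f(x)] = \mathrm{Ad}_f(d^\varphi(x)), \]
as required.

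The main obstacle I anticipate is the product compatibility step: one must confirm that each manipulation used in the proof of Point~(4) of Proposition~\ref{Adf properadic0} is genuinely bilinear in its two arguments, rather than tacitly exploiting the equality of the two inputs or the Maurer--Cartan equation of~$\varphi$. Once this is verified, the remaining pieces---bijectivity and differential compatibility---follow cleanly from Proposition~\ref{Adf properadic0}.
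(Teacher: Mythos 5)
Your overall architecture matches the paper's: bijectivity via Point (2) of Proposition \ref{Adf properadic0}, and the intertwining of $d^{\varphi}$ and $d^{\psi}$ via Points (1) and (3) combined with bracket compatibility. The gap is in the bracket-compatibility step, and it is exactly the obstacle you flagged without resolving. The ``stronger statement'' $\mathrm{Ad}_f(x \star y) = \mathrm{Ad}_f(x) \star \mathrm{Ad}_f(y)$ is false in general, because the identity $f \rhd (x \star y) = (f; f\rhd x) \rhd y$ does not polarize: expanding $(f; f\rhd x)\rhd y$ by iterating the comonadic decomposition produces all graphs with a bottom level of $f$-vertices and two vertices labelled $x$ and $y$ above it, including configurations where $x$ and $y$ sit side by side rather than one above the other, and these side-by-side terms do not occur in $f\rhd(x\star y)$. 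When $x = y = \varphi$ with $|\varphi| = -1$, each side-by-side configuration occurs twice with opposite Koszul signs and cancels internally---that is why the proof of Point (4) of Proposition \ref{Adf properadic0} may use $f\rhd(\varphi\star\varphi) = (f;f\rhd\varphi)\rhd\varphi$---but this cancellation tacitly exploits the equality and odd degree of the two inputs, which is precisely the failure mode you anticipated. For distinct $x,y$ the extra terms survive and only cancel against the matching terms of $(f; f\rhd y)\rhd x$.

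The correct statement, and the one the paper proves, is therefore the skew-symmetrized identity
\[
(f ; f \rhd x) \rhd y  \;-\; (-1)^{|x||y|}(f ; f \rhd y) \rhd x \;=\; f \rhd (x \star y) \;-\; (-1)^{|x||y|}\, f \rhd (y \star x)\ ,
\]
established by the graph-cancellation argument just described; one then also needs its vertical mirror for $\lhd$ applied to $\mathrm{Ad}_f(x)$ and $\mathrm{Ad}_f(y)$, together with formulas (4) and (6) of Lemma \ref{cal}, to conclude $\mathrm{Ad}_f([x,y]) = [\mathrm{Ad}_f(x), \mathrm{Ad}_f(y)]$. So $\mathrm{Ad}_f$ is a morphism of Lie algebras but not, in general, of the underlying Lie-admissible products. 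Your bijectivity and differential-compatibility steps are sound once the bracket compatibility is in place, since the latter only requires $\mathrm{Ad}_f([\varphi, x]) = [\mathrm{Ad}_f(\varphi), \mathrm{Ad}_f(x)]$, which the skew-symmetrized identity does deliver.
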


\begin{proof}	
	Let us prove that $\mathrm{Ad}_f$ is a morphism of Lie algebras. For all $x,y$ in $\mathfrak{g}_A$, we claim that  \begin{equation}\label{dua}
	  (f ; f \rhd x) \rhd y  - (-1)^{|x||y|} 	  (f ; f \rhd y) \rhd x = f \rhd (x \star y) -   (-1)^{|x||y|} f \rhd (y \star x) \ .
	\end{equation} In the component $(f ; f \rhd x) \rhd y $, one is applying twice the comonadic decomposition map in order to produce all the graphs with two top vertices
	labeled by $x$ and $y$ and a bottom level of vertices labeled by $f$. When the two top vertices do not sit one
	above the other and can be vertically switched, the corresponding terms cancel with the ones in $(f ; f \rhd y) \rhd x$. Only remain the terms where the two vertices do sit one above the other, which is obtained by \[ f \rhd (x \star y) -   (-1)^{|x||y|} f \rhd (y \star x) \ .\] By Equation (\ref{dua}), we have that \[\mathrm{Ad}_f \left( \left[x,y\right] \right) = (f ; (f ; f \rhd x) \rhd y ) \circledcirc f^{-1} - (-1)^{|x||y|} 	 (f ; (f ; f \rhd y) \rhd x ) \circledcirc f^{-1} \ . \] Considering the vertical symmetry of Equation (\ref{dua}) applied to $\mathrm{Ad}_f (x )$ and $ \mathrm{Ad}_f (y )$, the following equation holds as well \[\mathrm{Ad}_f (x ) \lhd	(f ; \mathrm{Ad}_f (y ) \lhd f)   - (-1)^{|x||y|} 	 \mathrm{Ad}_f (y ) \lhd	(f ; \mathrm{Ad}_f (x ) \lhd f)  = \left[\mathrm{Ad}_f (x ), \mathrm{Ad}_f (y ) \right] \lhd f  \ .   \] As observed in the proof of \cite[Theorem 5.1]{PHC}, we have \[\mathrm{Ad}_{f}(x) \lhd \left(f;   f \rhd y \right) = \left(f ; \mathrm{Ad}_{f}(x) \lhd  f \right) \rhd y \overset{(6)}{=} \left(f ; f \rhd x \right) \rhd y \ , \] which we simplify by using formula (6) of Lemma \ref{cal}. Similarly, we get $\mathrm{Ad}_{f}(y) \lhd \left(f;   f \rhd x \right) = \left(f ; f \rhd y \right) \rhd x$ and thus \[\left[\mathrm{Ad}_f (x ), \mathrm{Ad}_f (y ) \right] \lhd f  = (f ; f \rhd x) \rhd y  - (-1)^{|x||y|} 	  (f ; f \rhd y) \rhd x \ . \]  Applying the operator $(f; -) \circledcirc f^{-1}$ on both sides and using formula (4) of Lemma \ref{cal} implies that $\mathrm{Ad}_f \left( \left[x,y\right] \right) = \left[\mathrm{Ad}_f (x ), \mathrm{Ad}_f (y ) \right]  $.   Let us now prove that $\mathrm{Ad}_f$ is a morphism of chain complexes that is \[ \mathrm{Ad}_{f} \left(d^{\varphi} (x) \right)  = d^{\psi} \left( \mathrm{Ad}_{f} (x) \right) \ .   \] Since $\psi =  \mathrm{Ad}_{f} (\varphi) - \left(f ;d(f) \right) \circledcirc f^{-1} $ by point (1) of Proposition \ref{Adf properadic0}, the result follows from point (3) of Proposition \ref{Adf properadic0}. 
	Finally, by point (2) of Proposition \ref{Adf properadic0}, we have  \[\mathrm{Ad}_f \circ \mathrm{Ad}_{f^{-1}} = \mathrm{Ad}_{1} = \mathrm{id}\ ,\] the operator $\mathrm{Ad}_f$ is an isomorphism of dg Lie algebras whose inverse is given by $\mathrm{Ad}_{f^{-1}} $. 
\end{proof}

\subsection{Gauge homotopy equivalences}\label{2.2}

In this section, we study the notion of homotopy equivalence between two algebraic structures defined on a fixed chain complex \( A \). More precisely, we introduce the closely related concept of \emph{gauge homotopy equivalence}.

\begin{definition}[Homotopy equivalences]
Two $\Omega \C$-algebra structures $(A, \varphi)$ and $(B, \psi)$ are 
\begin{itemize}
	\item[$\centerdot$] \emph{homotopy equivalent} if there exists a zig-zag of quasi-isomorphisms of $\Cobar \C$-algebras \[(A, \varphi) \; \overset{\sim}{\longleftarrow} \;  \cdot \;  \overset{\sim}{\longrightarrow} \;  \cdots \;  \overset{\sim}{\longleftarrow} \; \cdot \;  \overset{\sim}{\longrightarrow} \; (B, \psi) \ .\]
	\item[$\centerdot$] \emph{gauge homotopy equivalent} if there exists an $\infty$-quasi-isomorphism of $\Cobar \C$-algebras 
		\[\begin{tikzcd}[column sep=normal]
			(A,\varphi) \ar[r,squiggly,"\sim"] 
			& (B, \psi) \ .
				\end{tikzcd} 
		\]
		\item[$\centerdot$] \emph{gauge $n$-homotopy equivalent} if $(A,\varphi)$ is gauge homotopy
		equivalent to an $\Omega \C$-algebra $(B,\phi)$ such that $\phi - \psi \in \mathcal{F}^{n+1} \mathfrak{g}_B$.
\end{itemize}
\end{definition}

\noindent Homotopy equivalences and gauge homotopy equivalences coincide in most examples:

\begin{proposition} \label{formality-gauge} 
	Let $\C$ be a reduced conilpotent dg coproperad over $R$. 
	\begin{enumerate}[leftmargin=1.2cm]
		\item Suppose that $R$ is a field and that one of the following conditions is satisfied;
		\begin{enumerate} 
			\item[$i.$] $R$ is of characteristic zero;
			\item[$ii.$] $\C$ is a symmetric quasi-planar cooperad.
		\end{enumerate}	$\Cobar \C$-algebras are homotopy equivalent if and only if they are gauge homotopy equivalent. 	
		\item Suppose that $\C$ is non-symmetric operad. 
		\begin{enumerate}
			\item[$i.$] If $R$ is a field, homotopy equivalent $\Cobar \C$-algebras are  gauge homotopy equivalent.
			\item[$ii.$] If $\C$ and $\Cobar \C$ are aritywise and degreewise flat, gauge homotopy equivalent $\Cobar \C$-algebras are homotopy equivalent.
		\end{enumerate}
	\end{enumerate}
\end{proposition}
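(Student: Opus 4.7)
The plan is to split the biconditional into its two implications and reduce each to two standard inputs—the bar-cobar adjunction and the invertibility of $\infty$-quasi-isomorphisms—then verify which inputs are available under each hypothesis.

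For the direction \emph{gauge homotopy equivalent implies homotopy equivalent} (asserted in cases (1)(i), (1)(ii), and (2)(ii)), I would use the bar-cobar adjunction. An $\infty$-quasi-isomorphism $f : (A,\varphi) \rightsquigarrow (B,\psi)$ corresponds to a strict morphism $\widetilde{f} : \Omega \Bar_{\iota}(A,\varphi) \to (B,\psi)$ of $\Omega\C$-algebras, and the counit $\varepsilon : \Omega \Bar_{\iota}(A,\varphi) \to (A,\varphi)$ is a strict quasi-isomorphism: this is automatic over a field in cases (1)(i) and (1)(ii), and follows from the standard bar-cobar spectral sequence under the aritywise and degreewise flatness of $\C$ and $\Omega \C$ in case (2)(ii). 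Since $f^{(0)}$ factors through $\varepsilon$, the two-out-of-three property identifies $\widetilde{f}$ as a strict quasi-isomorphism, giving the desired zig-zag
\[ (A,\varphi) \xleftarrow{\varepsilon} \Omega \Bar_{\iota}(A,\varphi) \xrightarrow{\widetilde{f}} (B,\psi). \]

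For \emph{homotopy equivalent implies gauge homotopy equivalent} (asserted in cases (1)(i), (1)(ii), and (2)(i)), it suffices to invert each wrong-way arrow of a strict zig-zag into an $\infty$-quasi-isomorphism and then concatenate all arrows via the composition of $\infty$-morphisms recalled in Theorem \ref{invertibility prop}. The key input is the \emph{fundamental theorem of $\infty$-morphisms}: every strict quasi-isomorphism between $\Omega\C$-algebras admits an $\infty$-quasi-inverse. This is classical over a field of characteristic zero (case (1)(i)); it is available for symmetric quasi-planar cooperads via \cite{LRL23}, where the extra combinatorial structure replaces the need to divide by $\Sigma$-invariants (case (1)(ii)); and it holds for non-symmetric operads over any field, since the usual proof of homotopy transfer never invokes any symmetric group action (case (2)(i)).

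The main obstacle is precisely establishing the fundamental theorem of $\infty$-morphisms outside the characteristic zero setting—that is, in cases (1)(ii) and (2)(i)—which requires adapting the construction of $\infty$-quasi-inverses so that no averaging over symmetric groups is performed. Once those two inputs are available, the remainder of the argument is a formal consequence of the bar-cobar adjunction and of the compatibilities between strict and $\infty$-morphisms recorded in Section \ref{2.1}.
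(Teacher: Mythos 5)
Your two-step strategy (rectify an $\infty$-quasi-isomorphism into a strict zig-zag via bar--cobar, and invert strict quasi-isomorphisms into $\infty$-quasi-isomorphisms via the fundamental theorem) is the standard one and is sound for the \emph{operadic} cases: for (2)(i)--(ii) it is essentially the argument the paper invokes by reference to \cite[Proposition~2.21~(2)]{Kaledin}, and for (1)(ii) it matches the content of \cite[Proposition~43]{LRL23}, where quasi-planarity is exactly what makes the $\infty$-quasi-inverse construction work without dividing by orders of symmetric groups. You are also honest that you have not actually proved the fundamental theorem of $\infty$-morphisms in cases (1)(ii) and (2)(i); the paper does not prove it either, it outsources these points entirely to the cited references, so this is not a defect relative to the paper.

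The genuine gap is in case (1)(i), which concerns a co\emph{properad} $\C$, not merely a cooperad. For gebras over a properad there is no free-algebra functor (the would-be monad mixes inputs and outputs), hence no cofree $\C$-coalgebra, no bar construction $\Bar_{\iota}(A,\varphi)$ of a gebra, and no counit $\Omega\Bar_{\iota}(A,\varphi)\to(A,\varphi)$. Your rectification zig-zag $(A,\varphi)\xleftarrow{\varepsilon}\Omega\Bar_{\iota}(A,\varphi)\xrightarrow{\widetilde f}(B,\psi)$ therefore does not exist in the setting where part (1) is stated in full generality. This is precisely why the paper proves (1)(i) by citing \cite[Theorem~1.11]{SPH}, whose argument is carried out with the simplicial/Maurer--Cartan-space methods of the properadic homotopical calculus rather than by rectification. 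The direction ``zig-zag $\Rightarrow$ gauge'' survives in the properadic characteristic-zero case because the invertibility of $\infty$-quasi-isomorphisms is available there (Theorem~\ref{invertibility prop} and \cite{PHC}), but the converse direction of your argument needs to be replaced wholesale when $\C$ has operations with several outputs.
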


\begin{proof}\leavevmode
	\begin{enumerate}
		\item The points $i.$ and $ii.$ are respectively due to \cite[Theorem 1.11]{SPH}  and \cite[Proposition~43]{LRL23}.
		\item  The proof is the exact same than the proof of Point (2) in \cite[Propostion~2.21]{Kaledin}, replacing the homology $(H(A), \varphi_*)$ by $(B, \psi)$. \qedhere
	\end{enumerate}
\end{proof}

\begin{definition}[Transferred structure]
A $\Cobar \C$-algebra $(A, \varphi)$ admits a \emph{transferred structure} if there exist a $\Cobar \C$-algebra structure $(H(A), \varphi_t)$ and two $\infty$-quasi-isomorphisms 
	\[\hbox{
		\begin{tikzpicture}
			
			\def\upshift{0.075}
			\def\downshift{0.075}
			\pgfmathsetmacro{\midshift}{0.005}
			
			\node[left] (x) at (0, 0) {$(A,\varphi)$};
			\node[right=1.5 cm of x] (y) {$(H(A),\varphi_t) \ .$};
			
			\draw[->] ($(x.east) + (0.1, \upshift)$) -- node[above]{\mbox{\tiny{$p_{\infty}^A$}}} ($(y.west) + (-0.1, \upshift)$);
			\draw[->] ($(y.west) + (-0.1, -\downshift)$) -- node[below]{\mbox{\tiny{$i_{\infty}^A$}}} ($(x.east) + (0.1, -\downshift)$);
			
	\end{tikzpicture}}
	\] 
\end{definition}

\noindent At least if $R$ is a characteristic zero field, any $\Cobar \C$-algebra $(A, \varphi)$ admits a transferred structure by the homotopy transfer theorem: 

\begin{theorem}[Homotopy transfer theorem] \label{HTT}  
	\noindent	Let $R$ be a commutative ground ring and let $A$ be a chain complex related to $H(A)$ via a contraction, i.e. there exists \[
	\hbox{
		\begin{tikzpicture}
			
			\def\upshift{0.075}
			\def\downshift{0.075}
			\pgfmathsetmacro{\midshift}{0.005}
			
			\node[left] (x) at (0, 0) {$(A,d)$};
			\node[right=1.5 cm of x] (y) {$(H(A),0)$};
			
			\draw[->] ($(x.east) + (0.1, \upshift)$) -- node[above]{\mbox{\tiny{$p$}}} ($(y.west) + (-0.1, \upshift)$);
			\draw[->] ($(y.west) + (-0.1, -\downshift)$) -- node[below]{\mbox{\tiny{$i$}}} ($(x.east) + (0.1, -\downshift)$);
			
			\draw[->] ($(x.south west) + (0, 0.1)$) to [out=-160,in=160,looseness=5] node[left]{\mbox{\tiny{$h$}}} ($(x.north west) - (0, 0.1)$);
	\end{tikzpicture}} \] such that $ip - \mathrm{id}_A = d_A h + h d_A\ ,$  $pi = \mathrm{id}_{H(A)} \ ,$ $h^2 =0\ , $ $  ph = 0\ ,$ $hi =0 \ .   $  Let $\C$ be a reduced conilpotent dg coproperad over $R$ and suppose that we are in one of the three following cases. 
	\begin{itemize}
		\item[$i.$] $\C$ is a symmetric cooperad and $R$ is a $\mathbb{Q}$-algebra;
		\item[$ii.$]  $\C$ is a symmetric quasi-planar cooperad and $R$ is a field;
		\item[$iii.$] $\C$ is a coproperad and $R$ is a characteristic zero field.  
	\end{itemize} 	For any $\Cobar \C$-algebra structure, there exists a transferred structure $(H(A), \varphi_t)$ such that the $\infty$-quasi-isomorphisms $i_{\infty}^A$ and $p_{\infty}^A$ extend the embedding $i$ and the projection $p$. 
\end{theorem}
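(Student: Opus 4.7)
The plan is to exhibit the transferred structure, together with the two $\infty$-quasi-isomorphisms, through explicit formulas written as sums over decorated directed acyclic graphs. Starting from the Maurer--Cartan element $\varphi \in \MC(\mathfrak{g}_A)$ encoding the $\Omega\C$-algebra structure on $A$, I would define, for each $c \in \overline{\C}(m,n)$, the operation $\varphi_t(c) \in \End_{H(A)}(m,n)$ as follows: iterate the infinitesimal decomposition $\Delta_{(1,1)}$ of $\C$ to obtain a formal sum of graphs with vertices labelled in $\overline{\C}$; then decorate each graph by applying $\varphi$ at every vertex, inserting the homotopy $h$ along every internal edge, composing with $i$ at every input leg and with $p$ at every output leg. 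The $\infty$-morphisms $i_\infty^A$ and $p_\infty^A$ are defined by variants of the same procedure: for $i_\infty^A$, the output legs are decorated by $\id_A$ instead of $p$; for $p_\infty^A$, the input legs are decorated by $\id_A$ instead of $i$. Their first components are then exactly $i$ and $p$ respectively, which are quasi-isomorphisms by hypothesis.

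The bulk of the work consists in checking that $\varphi_t$ satisfies the Maurer--Cartan equation $d \varphi_t + \varphi_t \star \varphi_t = 0$ in $\mathfrak{g}_{H(A)}$ and that $i_\infty^A$, $p_\infty^A$ satisfy the $\infty$-morphism equation $f \rhd \varphi - \psi \lhd f = df$. Both are graph-combinatorial calculations. Expanding $d \varphi_t$ produces sums over the above graphs with either a marked internal edge (from differentiating $h$) or a marked vertex (from differentiating $\varphi$). Applying the side conditions $d h + h d = ip - \id_A$ at each marked edge splits it into three contributions: the $ip$ term reassembles into two smaller decorated graphs joined along a new edge, and therefore matches $\varphi_t \star \varphi_t$; the $\id_A$ term combines with the marked-vertex contributions and cancels thanks to the Maurer--Cartan equation for $\varphi$. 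The $\infty$-morphism equations are verified by an analogous bookkeeping of marked edges and vertices, additionally using the side relations $p i = \id_{H(A)}$, $p h = 0$, $h i = 0$ and $h^2 = 0$ to eliminate degenerate graphs.

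The main obstacle, which forces the three distinct sets of hypotheses, lies in making these graph sums well-defined as morphisms of $\mathbb{S}$-bimodules. In case $iii.$, the properadic cocomposition produces graphs of positive genus, possibly with non-trivial automorphism groups; organizing the sum over $\mathbb{S}$-equivariant representatives requires dividing by $|\Aut(\Gamma)|$, which is only possible in characteristic zero. In the operadic cases $i.$ and $ii.$, the graphs reduce to rooted trees: case $ii.$ can dispense with $\mathbb{Q}$-coefficients because the quasi-planar structure provides a canonical ordering at each vertex and yields a well-defined sum over any field; case $i.$ still requires a $\mathbb{Q}$-algebra because transferring a genuinely symmetric cooperadic structure amounts to averaging a planar formula over symmetric group actions. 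The technical heart of the proof is the careful management of signs, symmetry factors, and the combinatorial matching between the terms of $d \varphi_t$, of $\varphi_t \star \varphi_t$ and of the $\infty$-morphism equations under these three different conventions.
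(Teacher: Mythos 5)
First, note that the paper does not actually prove this theorem: its ``proof'' consists of citations (Berglund and Loday--Vallette for case $i.$, Le Grignou--Roca i Lucio for case $ii.$, Hoffbeck--Leray--Vallette for case $iii.$). Your outline for the transferred structure $\varphi_t$ --- iterated cocomposition, $\varphi$ at the vertices, $h$ on internal edges, $i$ on inputs, $p$ on outputs, and the verification of the Maurer--Cartan equation by marking an edge or a vertex and using $dh+hd=ip-\mathrm{id}_A$ --- is exactly the Van der Laan/Kontsevich--Soibelman argument underlying the references for cases $i.$ and $iii.$, so the overall strategy is the right one.

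There is, however, a concrete error in your formulas for the $\infty$-quasi-isomorphisms. Decorating the extra external legs by $\mathrm{id}_A$ instead of $p$ (resp.\ $i$) produces maps of the wrong homological degree: a graph with $k$ vertices (each carrying $\varphi$, of degree $-1$) and $k-1$ internal edges (each carrying $h$, of degree $+1$) has total degree $-1$, which is correct for the Maurer--Cartan element $\varphi_t$ but not for the components of an $\infty$-morphism, which must have degree $0$. The standard formula replaces the output decoration $p$ by the homotopy $h$ for $i_\infty^A$ (and dually for $p_\infty^A$), supplying the missing degree $+1$; with $\mathrm{id}_A$ the equation $f\rhd\varphi-\psi\lhd f=d(f)$ cannot hold. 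In the properadic case $iii.$ the construction of $p_\infty^A$ is moreover genuinely more delicate than ``the same procedure with the roles of inputs and outputs exchanged,'' which is part of why the cited reference devotes separate work to it. Finally, your account of the hypotheses is only partly accurate: the genus/automorphism issue forcing characteristic zero in case $iii.$ is correct, but in case $ii.$ the cited proof proceeds by an inductive reduction along the quasi-planar filtration to the planar case rather than by exhibiting a canonical ordering that makes a symmetric sum converge, and in case $i.$ the $\mathbb{Q}$-algebra hypothesis enters through the handling of $\mathbb{S}$-invariants versus coinvariants for a general symmetric cooperad $\mathcal{C}$, not through an averaging of a planar formula.
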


\begin{proof} In the case $i.$, we refer the reader to \cite{berglund},  \cite[Section~10.3]{LodayVallette12} and references therein. In the case $ii.$ and $iii.$, we refer the reader respectively to \cite[Section~5.5]{LRL23} and \cite[Theorem~4.14]{PHC}. 
\end{proof}

\begin{proposition}\label{isotopy} Let $(A, \varphi)$ and $(B, \psi)$ be two $\Cobar \C$-algebra structure admitting transferred structures. The following propositions are equivalent. 
	\begin{enumerate}
		\item These $\Cobar \C$-algebra structures are gauge homotopy equivalent;
		\item There exists an $\infty$-isomorphism $
		f : (H(A), \varphi_t)  
		\rightsquigarrow (H(B), \psi_t) \ ; $
		\item There exists an isomorphism $\iota : H(B) \to H(A)$ and an $\infty$-isotopy \begin{center}
			\begin{tikzcd}[column sep=normal]
				(H(A), \varphi_t) \ar[r,squiggly,"\sim"] 
				& (H(A), \iota \cdot \psi_t)  \ .
			\end{tikzcd} 
		\end{center} 
		\item There exist an isomorphism $\iota : H(B) \to H(A)$ and $f \in \Gamma_{H(A)}$ such that \[f \cdot \varphi_t = \iota \cdot \psi_t  \ . \]
	\end{enumerate}
\end{proposition}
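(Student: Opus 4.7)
The plan is to prove the cycle $(1) \Rightarrow (2) \Rightarrow (3) \Leftrightarrow (4) \Rightarrow (1)$, leveraging the group structure of $\infty$-isomorphisms (Theorem \ref{invertibility prop}) together with the characterization of $\infty$-isomorphisms via the adjoint action (Proposition \ref{Adf properadic0}). For $(1) \Rightarrow (2)$, given an $\infty$-quasi-isomorphism $f \colon (A,\varphi) \rightsquigarrow (B,\psi)$, I form the composite
\[
g \coloneqq p_\infty^B \circledcirc f \circledcirc i_\infty^A \colon (H(A), \varphi_t) \rightsquigarrow (H(B), \psi_t),
\]
which is an $\infty$-quasi-isomorphism by Theorem \ref{invertibility prop}(2). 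Because $H(A)$ and $H(B)$ carry zero differential, $g^{(0)}$ is a quasi-isomorphism between chain complexes with vanishing differentials and hence an isomorphism; Theorem \ref{invertibility prop}(3) then upgrades $g$ to an $\infty$-isomorphism.

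For $(2) \Rightarrow (3)$, given an $\infty$-isomorphism $f$ as in $(2)$, I set $\iota \coloneqq (f^{(0)})^{-1} \colon H(B) \to H(A)$ and regard it as an $\infty$-morphism concentrated in arity $0$. By Proposition \ref{Adf properadic0}(4), the element $\iota \cdot \psi_t$ is a $\Cobar\C$-algebra structure on $H(A)$, and by Proposition \ref{Adf properadic0}(1), $\iota$ becomes an $\infty$-isomorphism $(H(B), \psi_t) \rightsquigarrow (H(A), \iota \cdot \psi_t)$. The $\circledcirc$-composite
\[
\iota \circledcirc f \colon (H(A), \varphi_t) \rightsquigarrow (H(A), \iota \cdot \psi_t)
\]
has arity-$0$ component $\iota \circ f^{(0)} = \mathrm{id}_{H(A)}$, hence is the required $\infty$-isotopy. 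The equivalence $(3) \Leftrightarrow (4)$ is then a direct restatement of Proposition \ref{Adf properadic0}(1) applied to an $\infty$-isotopy.

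For $(3) \Rightarrow (1)$, I $\circledcirc$-compose the chain of $\infty$-quasi-isomorphisms
\[
(A, \varphi) \xrightarrow{p_\infty^A} (H(A), \varphi_t) \rightsquigarrow (H(A), \iota \cdot \psi_t) \xrightarrow{\iota^{-1}} (H(B), \psi_t) \xrightarrow{i_\infty^B} (B, \psi)
\]
via Theorem \ref{invertibility prop}(2), producing a single $\infty$-quasi-isomorphism $(A, \varphi) \rightsquigarrow (B, \psi)$, which is by definition a gauge homotopy equivalence. I expect the main subtlety to lie in $(2) \Rightarrow (3)$: verifying that $\iota$, viewed as an $\infty$-morphism with vanishing higher components, indeed defines an $\infty$-isomorphism, and matching the convention for the action $\iota \cdot \psi_t$ (transport of $\psi_t$ along $\iota \colon H(B) \to H(A)$) with the direction of $\circledcirc$-composition. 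All other steps reduce to direct applications of Theorem \ref{invertibility prop} and Proposition \ref{Adf properadic0}.
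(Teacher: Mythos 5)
Your proposal is correct and follows essentially the same route as the paper: the same composites $p_\infty^B \circledcirc f \circledcirc i_\infty^A$ and $i_\infty^B \circledcirc \iota^{-1} \circledcirc k \circledcirc p_\infty^A$, the same use of $\iota \coloneqq (f^{(0)})^{-1}$ together with Proposition \ref{Adf properadic0} for $(2)\Rightarrow(3)$, and the same reduction of $(3)\Leftrightarrow(4)$ to the characterization of $\infty$-isomorphisms via the adjoint action (where your citation of Point~(1) of Proposition \ref{Adf properadic0} is in fact the accurate reference). Your added remark that $g^{(0)}$ is automatically an isomorphism because the transferred structures live on complexes with zero differential is a worthwhile explicit justification that the paper leaves implicit.
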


\begin{proof} \leavevmode
	\begin{description}
		\item[$(1) \Rightarrow (2)$] If $g : (A,\varphi)  \rightsquigarrow (B, \psi) $ is a gauge homotopy equivalence, the composite \[f \coloneqq p_{\infty}^B\circledcirc g \circledcirc i_{\infty}^A\] gives the desired $\infty$-isomorphism. 
		\item[$(2) \Rightarrow (3)$] Let $f$ be the $\infty$-isomorphism given by (2). Let us consider the isomorphism  \[ \iota \coloneqq \left(f^{(0)} \right)^{-1} : H(B) \to H(A) \ . \] By Proposition \ref{Adf properadic0}, $\iota \cdot \psi_t$ is a Maurer--Cartan element  and $\iota :  \psi_t \rightsquigarrow \iota \cdot \psi_t$ is an $\infty$-isomorphism so that $\iota \circledcirc f$ gives the desired $\infty$-isotopy.
		\item[$(3) \Rightarrow (1)$] Let $k$ be the $\infty$-isotopy given by $(3)$. Then the composite \[ i_{\infty}^B\circledcirc \iota^{-1} \circledcirc k  \circledcirc p_{\infty}^A\] gives the desired gauge homotopy equivalence.   
		\item[$(3) \Leftrightarrow (4)$] This is Point (3) of Proposition \ref{Adf properadic0}. \qedhere
	\end{description}
\end{proof}

\begin{corollary}\label{coro isotopy}
	Let $(A, \varphi)$ and $(B, \psi)$ be two $\Cobar \C$-algebra structure admitting transferred structures. The following propositions are equivalent. 
	\begin{enumerate}
		\item These $\Cobar \C$-algebra structures are gauge $n$-homotopy equivalent;
		\item There exist an isomorphism $\iota : H(B) \to H(A)$ and $f \in \Gamma_{H(A)}$ such that \[f \cdot \varphi_t \equiv \iota \cdot \psi_t \pmod{ \mathcal{F}^{n+1} \mathfrak{g}_{H(A)}} \ .\]
	\end{enumerate}
\end{corollary}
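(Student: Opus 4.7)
The plan is to refine the proof of Proposition~\ref{isotopy} by tracking the filtration $\mathcal{F}$ at every step. Two compatibility properties of~$\mathcal{F}$ play a central role. First, the homotopy transfer theorem is compatible with~$\mathcal{F}$: if two $\Omega\C$-algebra structures on a chain complex agree modulo $\mathcal{F}^{n+1}\mathfrak{g}$, then their transferred structures on the homology agree modulo the corresponding $\mathcal{F}^{n+1}$; this follows from the explicit tree/graph formula for the transferred structure, whose vertices are labelled by the input $\Omega\C$-structure and whose resulting terms therefore inherit the filtration depth of that input. Second, the adjoint action $\iota \cdot (-)$ of an $\infty$-isomorphism whose only component is a plain chain isomorphism $\iota : H(B) \to H(A)$ preserves $\mathcal{F}$, since the derivation contribution $(\iota;d(\iota)) \circledcirc \iota^{-1}$ vanishes (both complexes carry the zero differential), so $\iota \cdot (-)$ reduces to the conjugation $\mathrm{Ad}_\iota(-)$, which does not alter the arity of the underlying operations.

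For the direction $(1) \Rightarrow (2)$, I would first unpack the definition of gauge $n$-homotopy equivalence to produce an $\Omega\C$-algebra structure $(B,\phi)$ with $\phi - \psi \in \mathcal{F}^{n+1}\mathfrak{g}_B$ together with a gauge homotopy equivalence $(A,\varphi) \rightsquigarrow (B,\phi)$. Applying Proposition~\ref{isotopy} to this equivalence then produces an isomorphism $\iota : H(B) \to H(A)$ and $f \in \Gamma_{H(A)}$ with $f \cdot \varphi_t = \iota \cdot \phi_t$. Combining the two compatibility properties above yields
\[
f \cdot \varphi_t \;=\; \iota \cdot \phi_t \;\equiv\; \iota \cdot \psi_t \pmod{\mathcal{F}^{n+1}\mathfrak{g}_{H(A)}}\ ,
\]
which is exactly assertion~(2).

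For the converse direction $(2) \Rightarrow (1)$, I would set $\phi_t := \iota^{-1} \cdot (f \cdot \varphi_t)$ on $H(B)$. By the filtration-preservation of the $\iota$-action, $\phi_t - \psi_t \in \mathcal{F}^{n+1}\mathfrak{g}_{H(B)}$, and by Proposition~\ref{Adf properadic0} the composite $\iota^{-1} \circledcirc f : (H(A),\varphi_t) \rightsquigarrow (H(B),\phi_t)$ is an $\infty$-isomorphism. The crucial step is then to lift $\phi_t$ to an $\Omega\C$-algebra structure $\phi$ on $B$ with $\phi - \psi \in \mathcal{F}^{n+1}\mathfrak{g}_B$ such that $(B,\phi)$ admits $(H(B),\phi_t)$ as a transferred structure. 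This is the main obstacle, and would be achieved by a perturbative ``inverse homotopy transfer'': starting from the contraction realizing $\psi_t$ from $\psi$, one inductively constructs a Maurer--Cartan correction $\delta \in \mathcal{F}^{n+1}\mathfrak{g}_B$ to~$\psi$ whose transferred image realizes $\phi_t - \psi_t$, solving the HTT equation one filtration degree at a time; convergence is ensured by the completeness of $\mathcal{F}$ on $\mathfrak{g}_B$. Once $\phi$ is constructed, composing the $\infty$-quasi-isomorphisms $p_\infty^A$, $\iota^{-1} \circledcirc f$, and the lifted $\infty$-quasi-isomorphism $(H(B),\phi_t) \rightsquigarrow (B,\phi)$ yields the required gauge homotopy equivalence $(A,\varphi) \rightsquigarrow (B,\phi)$, establishing assertion~(1).
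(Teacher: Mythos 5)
The paper disposes of this corollary in one line, by rerunning the equivalence $(1) \Leftrightarrow (4)$ of Proposition \ref{isotopy} with equalities replaced by congruences modulo $\mathcal{F}^{n+1}$; your proof is a genuinely different and much heavier route, because you insist on tracking the literal definition of gauge $n$-homotopy equivalence, which demands a witness $(B,\phi)$ living on the complex $B$ itself. Your direction $(1) \Rightarrow (2)$ is correct, and the ingredient you isolate --- that the transferred structure depends on the input structure in a filtration-preserving way, because each term of the transfer formula contains a vertex labelled by the difference of the two inputs --- is a real (if routine) verification that the paper leaves implicit. Likewise your observation that the action of a strict isomorphism $\iota$ preserves the filtration is correct and needed in both directions.

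The issue is your direction $(2) \Rightarrow (1)$. You correctly identify that the composite $i_\infty^B \circledcirc \iota^{-1} \circledcirc f \circledcirc p_\infty^A$ used in Proposition \ref{isotopy} no longer typechecks here, since $\iota^{-1}\circledcirc f$ now lands on $(H(B),\phi_t)$ with $\phi_t \neq \psi_t$, so that $i_\infty^B$ cannot be postcomposed; this forces you to lift $\phi_t$ to a structure $\phi$ on $B$ with $\phi - \psi \in \mathcal{F}^{n+1}\mathfrak{g}_B$. That lifting is the entire content of this implication, and in your write-up it is asserted rather than proved: ``solving the HTT equation one filtration degree at a time'' requires knowing that at each stage the obstruction to extending the correction $\delta$ can be killed, which amounts to the statement that the contraction induces a filtration-compatible surjection on Maurer--Cartan moduli of the twisted deformation complexes $\mathcal{F}^{n+1}\mathfrak{g}_B^{\psi} \to \mathcal{F}^{n+1}\mathfrak{g}_{H(B)}^{\psi_t}$ (a filtered Goldman--Millson type argument, which moreover uses the characteristic-zero hypothesis that the corollary does not state). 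Either supply that argument, or do what the paper implicitly does: observe that statement (2) is directly equivalent to $(A,\varphi)$ being gauge homotopy equivalent to some $(H(B),\chi)$ with $\chi - \psi_t \in \mathcal{F}^{n+1}\mathfrak{g}_{H(B)}$ --- everything then takes place on homology, where $\infty$-quasi-isomorphisms are $\infty$-isomorphisms and the adjoint action is always defined --- and address separately (or by convention) the identification of this condition with the definition's requirement of a witness on $B$.
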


\begin{proof}
	This is a direct consequence of the equivalence (1) $\Leftrightarrow $ (4) of Proposition \ref{isotopy}.
\end{proof}

\subsection{Obstruction sequences to homotopy equivalences}\label{2.35} 

 In characteristic zero, the notion of gauge homotopy equivalences can be fully recast as a gauge equivalence problem. This reformulation is crucial, as it allows us to apply the obstruction-theoretic methods developed in Section \ref{section1}. This leads to Theorem \ref{obstru en char 0} that gives obstruction sequences to gauge homotopy equivalences in characteristic zero. This result also set the stage for the obstruction-theoretic approach of formality over any coefficient ring developed in Section \ref{2.4}.

\begin{definition}[Gauge equivalence degree]
	Let $A$ and $B$ be chains complexes with an isomorphism $\iota : H(A) \cong H(B)$. Let $(A, \varphi)$ and $(B, \psi)$ be two $\Cobar \C$-algebra structures admitting transferred structures. Their \emph{gauge equivalence degree} is the one of $\varphi_t$ and $\iota \cdot \psi_t$. 
\end{definition}

\begin{theorem}\label{obstru en char 0}
Let $R$ be a $\mathbb{Q}$-algebra. Let $(A, \varphi)$ and $(B, \psi)$ be two $\Cobar \C$-algebra structures admitting transferred structures.  Suppose that there exists an isomorphism $\iota : H(A) \cong H(B)$.
	\begin{enumerate}
		\item[$(I)$] The following assertions are equivalent. 
		\begin{enumerate}
			\item[$(1)$] The gauge equivalence degree of $\varphi$ and $\psi$ is equal to $\infty$~.
			\item[$(2)$] For all $k \geqslant 1$, the algebras $(A, \varphi)$ and $(B, \psi)$ are gauge $k$-homotopy equivalent. 
		\end{enumerate}   
		\item[$(II)$] The following assertions are equivalent. 
		\begin{enumerate}
			\item[$(3)$] The gauge equivalence degree of $\varphi$ and $\phi$ is equal to $n \in \mathbb{N}$~.
			\item[$(4)$]  The algebras $(A, \varphi)$ and $(B, \psi)$ are gauge $(n-1)$-homotopy equivalent but not gauge $n$-homotopy equivalent. 
		\end{enumerate}	
	\end{enumerate}
\end{theorem}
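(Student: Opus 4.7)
The plan is to reduce the statement to a direct application of Theorem~\ref{suites d'obstru} in the complete dg Lie algebra $\mathfrak{g}_{H(A)}$. Fix the isomorphism $\iota : H(A) \cong H(B)$ from the hypothesis and consider the pair of Maurer--Cartan elements $\varphi_t$ and $\widetilde{\psi} := \iota \cdot \psi_t$ of $\mathfrak{g}_{H(A)}$, where the action is the one of Proposition~\ref{Adf properadic0}. By definition, the gauge equivalence degree of $\varphi$ and $\psi$ is precisely that of $\varphi_t$ and $\widetilde{\psi}$ in $\mathfrak{g}_{H(A)}$, so Theorem~\ref{suites d'obstru}(I) and~(II) apply verbatim and characterise this degree through the existence (respectively, non-existence past level $n$) of gauges $\omega_k \in (\mathfrak{g}_{H(A)})_0$ satisfying the congruence $\omega_k \cdot \varphi_t \equiv \iota \cdot \psi_t \pmod{\mathcal{F}^{k+1} \mathfrak{g}_{H(A)}}$.

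The second step is to recognise each such congruence as a gauge $k$-homotopy equivalence. Since $R$ is a $\mathbb{Q}$-algebra, Theorem~\ref{graphexp2} provides a group isomorphism $\exp : ((\mathfrak{g}_{H(A)})_0, \mathrm{BCH}) \cong (\Gamma_{H(A)}, \circledcirc)$, and by its very construction this isomorphism intertwines the BCH gauge action on $\mathrm{MC}(\mathfrak{g}_{H(A)})$ with the $\infty$-isotopy action $f \cdot -$ of Proposition~\ref{Adf properadic0}. Setting $f_k := \exp(\omega_k) \in \Gamma_{H(A)}$, the gauge congruence rewrites as $f_k \cdot \varphi_t \equiv \iota \cdot \psi_t \pmod{\mathcal{F}^{k+1} \mathfrak{g}_{H(A)}}$, which, by the equivalence $(1) \Leftrightarrow (2)$ of Corollary~\ref{coro isotopy} applied with this specific~$\iota$, is exactly the statement that $(A,\varphi)$ and $(B,\psi)$ are gauge $k$-homotopy equivalent. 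Concatenating these two dictionaries gives $(1) \Leftrightarrow (2)$ from Theorem~\ref{suites d'obstru}(I), and $(3) \Leftrightarrow (4)$ from Theorem~\ref{suites d'obstru}(II).

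The principal subtlety to address is the compatibility of isomorphisms between $H(A)$ and $H(B)$. The gauge equivalence degree is defined with the fixed $\iota$, whereas Corollary~\ref{coro isotopy} only produces \emph{some} isomorphism $\iota'_k : H(B) \to H(A)$, possibly depending on $k$. The plan is to show that, given a gauge $k$-homotopy equivalence realised via $(\iota'_k, f_k)$, the automorphism $h_k := \iota \circ (\iota'_k)^{-1}$ of $H(A)$ can be absorbed into the gauge while maintaining the isotopy property. Since $h_k$ has vanishing differential, the associative action formula of Proposition~\ref{Adf properadic0}(2) gives $\iota \cdot \psi_t = h_k \cdot (\iota'_k \cdot \psi_t)$, and the adjoint action $\mathrm{Ad}_{h_k}$ preserves the filtration $\mathcal{F}$; one then adjusts the sequence $(f_k)$ inductively, at each level conjugating by the current $h_k$ and projecting the first component back to the identity, to produce a coherent family compatible with the single $\iota$. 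This is the step I expect to require the most care; once carried out, the rest of the argument is a direct invocation of Theorem~\ref{suites d'obstru}.
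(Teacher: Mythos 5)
Your proposal is correct and follows essentially the same route as the paper, whose proof is exactly the one-line combination of Theorem~\ref{suites d'obstru}, Theorem~\ref{graphexp2}, Proposition~\ref{isotopy} and Corollary~\ref{coro isotopy} that you describe. The only difference is that you explicitly flag and treat the compatibility between the fixed $\iota$ of the gauge equivalence degree and the existentially quantified isomorphism in Corollary~\ref{coro isotopy}, a point the paper's proof passes over in silence; your absorption of the automorphism $h_k = \iota \circ (\iota'_k)^{-1}$ into the action is a reasonable way to close that small gap.
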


\begin{proof}
Thanks to Theorem \ref{graphexp2}, this result is a direct application of Theorem \ref{suites d'obstru} using the equivalence (1) $\Leftrightarrow $ (4) of Proposition \ref{isotopy} for Point (1) and Corollary \ref{coro isotopy} for Point (2). 
\end{proof}

\noindent As in Section \ref{section1}, one can highlight sufficient conditions on the complete dg Lie algebra ensuring the gauge homotopy equivalence between $\varphi$ and $\psi$.

\begin{theorem}[The bounded case]\label{the bounded case}
Under the assumption of Theorem \ref{obstru en char 0}, suppose that $\mathfrak{g}_{H(A)}$ is a bounded dg Lie algebra. The following assertions are equivalent. 
	\begin{enumerate}
			\item The gauge equivalence degree of $\varphi$ and $\psi$ is equal to $\infty$~.
			\item The algebras $(A, \varphi)$ and $(B, \psi)$ are gauge homotopy equivalent. 
	\end{enumerate}
	Furthermore, if there exists an $\Omega \C$-algebra structure $(H(A), \phi)$ such that \[H_{-1} \left(\mathfrak{g}_{H(A)}^{\phi} \right) = 0 \] all the $\Omega \C $-algebra structures on $H(A)$ are gauge homotopy equivalent.   
\end{theorem}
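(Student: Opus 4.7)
The plan is to reduce this theorem to the general bounded-case result in Theorem~\ref{suites d'obstructions3}, using the dictionary between gauge equivalences in $\mathfrak{g}_{H(A)}$ and gauge homotopy equivalences of $\Omega\C$-algebras that has already been set up in Sections~\ref{2.1}--\ref{2.2}.

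For the equivalence (1) $\Leftrightarrow$ (2), I would first recall that, by definition, the gauge equivalence degree of $(A,\varphi)$ and $(B,\psi)$ is the gauge equivalence degree of the Maurer--Cartan elements $\varphi_t$ and $\iota \cdot \psi_t$ in the complete dg Lie algebra $\mathfrak{g}_{H(A)}$. Since this Lie algebra is bounded by hypothesis, Theorem~\ref{suites d'obstructions3} identifies this degree being $\infty$ with the existence of a gauge $\lambda \in (\mathfrak{g}_{H(A)})_0$ such that $\lambda \cdot \varphi_t = \iota \cdot \psi_t$. Transporting $\lambda$ through the graph exponential isomorphism of Theorem~\ref{graphexp2} produces $f \in \Gamma_{H(A)}$ with $f \cdot \varphi_t = \iota \cdot \psi_t$, which by the equivalence $(1) \Leftrightarrow (4)$ of Proposition~\ref{isotopy} is precisely the statement that $(A,\varphi)$ and $(B,\psi)$ are gauge homotopy equivalent. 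Running these equivalences in both directions concludes the first claim.

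For the furthermore statement, the plan is to invoke the corollary that immediately follows Theorem~\ref{suites d'obstructions3}: in a bounded complete dg Lie algebra, the existence of a single Maurer--Cartan element $\phi$ with $H_{-1}(\mathfrak{g}_{H(A)}^{\phi}) = 0$ forces the gauge action on $\mathrm{MC}(\mathfrak{g}_{H(A)})$ to be transitive. Hence any two $\Omega\C$-algebra structures on $H(A)$, which correspond to two Maurer--Cartan elements, are gauge equivalent in $\mathfrak{g}_{H(A)}$. Exponentiating via Theorem~\ref{graphexp2} yields an $\infty$-isotopy between them, which is in particular an $\infty$-quasi-isomorphism, hence a gauge homotopy equivalence.

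I do not expect any real obstacle: all the ingredients are already proved in the preceding sections, and the argument is a short chain of bi-implications. The only point worth double-checking is that the existential statement on gauge $n$-homotopy equivalence in (2) matches cleanly with the existential content of~(1) in the presence of an isomorphism $\iota$; but this is already implicit in the formulation of Theorem~\ref{obstru en char 0}, and the identification via $\iota \cdot \psi_t$ handles it uniformly.
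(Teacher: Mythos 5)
Your proposal matches the paper's proof exactly: the paper also deduces the theorem as a direct application of Theorem~\ref{suites d'obstructions3}, transported through the graph exponential isomorphism of Theorem~\ref{graphexp2} and the equivalence $(1) \Leftrightarrow (4)$ of Proposition~\ref{isotopy}, with the furthermore statement following from the corollary on transitivity of the gauge action in the bounded case. No discrepancies to report.
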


\begin{proof}
	Thanks to Theorem \ref{graphexp2}, this result is a direct application of Theorem \ref{suites d'obstructions3} using the equivalence (1) $\Leftrightarrow $ (4) of Proposition \ref{isotopy}. 
\end{proof}

\begin{example}
	In the case of an highly connected variety $M$, the dg Lie algebra $\mathfrak{g}_{H(A)}$ corresponding to $A = \Omega^*_{dr}(M)$ is bounded, see Section \ref{section3}.
\end{example}

\begin{theorem}[The weight-graded case]\label{the weight case}
	Under the assumption of Theorem \ref{obstru en char 0}, suppose that  for $\delta \geqslant 1$, the dg Lie algebra $\mathfrak{g}_{H(A)}$ is  $\delta$-weight-graded,  \[\psi_t \in \mathfrak{g}_{H(A)}^{(\delta)} \quad \mbox{and} \quad  \varphi_t - \psi_t \in \mathcal{F}^{\delta +1} \mathfrak{g}_{H(A)} \ .\] The following assertions are equivalent. 
	\begin{enumerate}
		\item The gauge equivalence degree of $\varphi$ and $\psi$ is equal to $\infty$~.
		\item The algebras $(A, \varphi)$ and $(B, \psi)$ are gauge homotopy equivalent. 
	\end{enumerate}
	Furthermore, if there exists an $\Omega \C$-algebra structure $\phi \in \mathrm{MC}\left(\mathfrak{g}_{H(A)}^{(\delta)}\right)$  such that \[\mathcal{F}^{\delta +1} H_{-1}(\mathfrak{g}^{\phi}) = 0 \ .\] Then, any $\Omega \C$-algebra structure $(H(A),\phi')$ such that $\phi' - \phi \in \mathcal{F}^{\delta +1} \mathfrak{g}$ is gauge equivalent to $\phi$. 
\end{theorem}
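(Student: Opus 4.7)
The plan is to transcribe the proof of Theorem \ref{the bounded case} almost verbatim, replacing the bounded-case input Theorem \ref{suites d'obstructions3} by its weight-graded analogue Theorem \ref{suites d'obstructions}. All the work has already been done in Section \ref{section1} and Section \ref{2.2}; what remains is to feed the right data into the right machine.

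The first step is to translate the homotopical assertion into a gauge-equivalence assertion in the convolution algebra $\mathfrak{g}_{H(A)}$. By the equivalence $(1) \Leftrightarrow (4)$ of Proposition \ref{isotopy}, combined with the graph exponential/logarithm isomorphism of Theorem \ref{graphexp2} that identifies the gauge group of $\mathfrak{g}_{H(A)}$ with $\Gamma_{H(A)}$, gauge homotopy equivalence of $(A,\varphi)$ and $(B,\psi)$ is equivalent to the existence of a gauge equivalence between $\varphi_t$ and $\iota \cdot \psi_t$ in $\mathfrak{g}_{H(A)}$, for the fixed isomorphism $\iota : H(B) \to H(A)$. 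By definition, the gauge equivalence degree of $\varphi$ and $\psi$ is exactly the gauge equivalence degree of these two Maurer--Cartan elements.

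The second step is to invoke Theorem \ref{suites d'obstructions} with $\mathfrak{g} := \mathfrak{g}_{H(A)}$, $\psi := \iota \cdot \psi_t$ and $\varphi := \varphi_t$. Its hypotheses are exactly those assumed here: $\mathfrak{g}_{H(A)}$ is $\delta$-weight-graded, $\iota \cdot \psi_t$ lies in $\mathfrak{g}_{H(A)}^{(\delta)}$, and $\varphi_t - \iota \cdot \psi_t$ lies in $\mathcal{F}^{\delta+1}\mathfrak{g}_{H(A)}$. This yields the equivalence $(1) \Leftrightarrow (2)$. For the \emph{furthermore} clause I would reproduce the argument of the corollary following Theorem \ref{suites d'obstructions}: given $\phi \in \mathrm{MC}\bigl(\mathfrak{g}_{H(A)}^{(\delta)}\bigr)$ with $\mathcal{F}^{\delta+1} H_{-1}(\mathfrak{g}_{H(A)}^{\phi}) = 0$, a gauge triviality sequence of $\phi' - \phi$ in $\mathfrak{g}_{H(A)}^{\phi}$ consists only of vanishing classes -- trivially so for $k \leqslant \delta$ since $\phi' - \phi \in \mathcal{F}^{\delta+1}\mathfrak{g}$, and for $k \geqslant \delta+1$ because the classes $\vartheta_k$ then land in $\mathcal{F}^{\delta+1} H_{-1}(\mathfrak{g}_{H(A)}^{\phi}/\mathcal{F}^{k+1}) = 0$. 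Hence the gauge equivalence degree is $\infty$ and the first part of the theorem provides the gauge equivalence.

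I do not expect a genuine obstacle: the proof is a pure assembly of results proved earlier. The only point that deserves a short sanity check is that the twisted algebra $\mathfrak{g}_{H(A)}^{\iota \cdot \psi_t}$ inherits a $\delta$-weight grading in the sense of Definition \ref{delta wg} -- this is immediate from $\iota \cdot \psi_t \in \mathfrak{g}_{H(A)}^{(\delta)}$, since the bracket with $\iota \cdot \psi_t$ shifts weight by exactly $\delta$, so that $d^{\iota \cdot \psi_t} = d_0 + (d_\delta + [\iota \cdot \psi_t, -])$ still splits as $d_0 + d_\delta'$ in the required way.
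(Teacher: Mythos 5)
Your proposal is correct and follows essentially the same route as the paper: the paper's proof is precisely the one-line assembly of Theorem \ref{graphexp2}, the equivalence $(1) \Leftrightarrow (4)$ of Proposition \ref{isotopy}, and Theorem \ref{suites d'obstructions} (with the corollary following it for the \emph{furthermore} clause). Your added sanity check that $\mathfrak{g}_{H(A)}^{\iota \cdot \psi_t}$ inherits the $\delta$-weight grading is a welcome extra detail, not a deviation.
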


\begin{proof}
	Thanks to Theorem \ref{graphexp2}, this result is a direct application of Theorem \ref{suites d'obstructions} using the equivalence (1) $\Leftrightarrow $ (4) of Proposition \ref{isotopy}. 
\end{proof}

\begin{remark}
	In this weight-graded case, obstruction sequences can be generalized to arbitrary coefficient rings, see Section \ref{2.4}. 
\end{remark}

\begin{theorem}[Equivalence descent]\label{eq descent}
	Under the assumption of Theorem \ref{obstru en char 0}, let $S$ be a faithfully flat commutative $R$-algebra. The algebras $(H(A), \varphi_t)$ and $(H(B), \psi_t)$ are $\infty$-isotopic if and only if the algebras $(H(A) \otimes S, \varphi_t \otimes 1)$ and $(H(B) \otimes S, \psi_t \otimes 1)$ are $\infty$-isotopic. 
\end{theorem}

\begin{proof}
	Choose an $R$-linear retraction of the inclusion $R \hookrightarrow S$. It induces a morphism
	\[
	r \colon \mathfrak{g}_{H(A)\otimes S} \longrightarrow \mathfrak{g}_{H(A)}
	\]
	which is a retraction of the natural inclusion	in the category of filtered $\mathfrak{g}_{H(A)}$-modules.	By Theorem~\ref{graphexp2}, the result then follows from the same argument as in Theorem~\ref{cas complet}. Indeed, although $\mathfrak{g}_{H(A)\otimes S}$ need not be canonically identified with \[\mathfrak{g}_{H(A)} \otimes_R S \ ,\] the proof of Theorem~\ref{cas complet} only requires a compatible projection onto $\mathfrak{g}_{H(A)}$. Therefore, whenever such a projection is needed, we may use the retraction $r$ and the proof of Theorem~\ref{cas complet} carries over verbatim.
\end{proof}

\subsection{Obstruction sequences to gauge formality}\label{2.4}
This section focuses on the special case of formality. In this situation, the construction adapts to more general coefficient ring $R$. If $R$ is a $\mathbb{Q}$-algebra, one can already apply Theorem \ref{obstru en char 0}. Let us suppose that $R$ is not a $\mathbb{Q}$-algebra and let $\mathfrak{n}$ be the smallest integer not invertible in $R$. Suppose that $\C$ is a reduced weight-graded dg coproperad over $R$, i.e. the exhaustive filtration comes from a weight-grading \[\C =   \mathcal{I} \oplus \C^{(1)} \oplus \C^{(2)} \oplus  \cdots \oplus \C^{(n)} \oplus \cdots \quad \ , \quad d_{\C} \left(\C^{(k)}\right) \subset \C^{(k - 1)} \ . \]

\begin{example} Suppose that $A$ has no differential. 
	\begin{itemize}
		\item[$\centerdot$] The algebra $\mathfrak{g}_A$ is a $1$-weight-graded dg Lie algebra. If furthermore $d_{\C}$ is trivial, then $\mathfrak{g}_A$ is a $\delta$-weight-graded dg Lie algebra for all $\delta$, see Definition \ref{delta wg}. 
		\item[$\centerdot$] If  $\psi \in \mathfrak{g}_A$ is concentrated in weight $1$, then $\mathfrak{g}_A^{\psi}$ is a $1$-weight-graded dg Lie algebra. 
		\item[$\centerdot$] If furthermore $d_{\C}$ is trivial and $\psi$ is concentrated in weight $\delta$, then $\mathfrak{g}_A^{\psi}$ is a $\delta$-weight-graded dg Lie algebra. 
	\end{itemize}
\end{example}

\noindent Let us consider the properad 
\[\P \coloneqq \mathcal{T}\left(s^{-1}\C^{(1)}\right) / \left(d_{\Cobar \C} \left(s^{-1}\C^{(2)}\right)\right) \ , \] which comes with a twisting morphism $\C \to \P$. By \cite[Proposition~3.11]{Kaledin}, any $\Cobar \C$-algebra structure $(A, \varphi)$ induces a canonical $\P$-algebra structure \[(H(A),\varphi_*) \ . \]

\begin{definition}[Gauge formality] \label{definition formality prop}
	A $\Cobar \C$-algebra structure $(A, \varphi)$ is said to be 
	\begin{itemize}
		\item[$\centerdot$] \emph{gauge formal} if it is gauge homotopy equivalent to $(H(A), \varphi_*)$;
		\item[$\centerdot$] \emph{gauge $n$-formal} if it is gauge $(n+1)-$homotopy equivalent to $(H(A), \varphi_*)$. 
	\end{itemize} 
\end{definition}

	\begin{remark}
	For the sake of clarity, we deal with gauge formality that is with the case $\delta = 1$. But if $d_{\C} = 0$ then $\mathfrak{g}_H$ is also $\delta$-weight-graded dg Lie algebra for all $\delta \geqslant 1$. All what follows adapts directly to detect if an $\Omega \C$-algebra structure $\varphi \in  \mathcal{F}^{\delta}(\mathfrak{g}_H)$ is related to $\varphi^{\delta}$ by an $\infty$-isotopy. 
\end{remark}

\begin{proposition}\label{obstru bis}
Let $\varphi$ be an $\Cobar \C$-algebra structure on a graded $R$-module $H$. Let us set \[\psi \coloneqq \varphi^{(1)} \quad \mbox{and} \quad \mathfrak{h} \coloneqq \mathfrak{g}_H^{\psi} \] and suppose that $ \varphi - \psi \in \mathcal{F}^{n} \mathfrak{h}$ where $n \geqslant 1$ is such that $(n-1)!$ is invertible in $R$. Let us consider \[\vartheta_{n} \coloneqq \left[\varphi^{(n)} \right] \in H_{-1}\left( \mathfrak{h} / \mathcal{F}^{n+1}\mathfrak{h}  \right) \ .\] The following assertions are equivalent.
\begin{enumerate}
	\item The homology class $\vartheta_{n}$ vanishes; 
	\item The $\Cobar \C$-algebra structures $\varphi$ and $\psi$ are gauge $n$-homotopy equivalent.  
\end{enumerate}
\end{proposition}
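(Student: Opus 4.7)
By Corollary \ref{coro isotopy} applied with $B = A = H$ and $\iota = \mathrm{id}_H$, the gauge $n$-homotopy equivalence of $(H,\varphi)$ and $(H,\psi)$ is equivalent to the existence of $f \in \Gamma_H$ such that
\[
f \cdot \varphi \equiv \psi \pmod{\mathcal{F}^{n+1}\mathfrak{g}_H} \ .
\]
The proposition thus becomes a purely properadic analogue of Proposition \ref{obstru} and its weight-graded refinement Proposition \ref{obstructions}, where the Baker--Campbell--Hausdorff gauge action -- unavailable outside characteristic zero -- is replaced by the $\infty$-isotopy action supplied by Proposition \ref{Adf properadic0}.

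The core ingredient is a truncated expression of this action. For an $\infty$-isotopy of the form $f = \mathrm{id}_H + x^{(n-1)} + x^{(n)}$, with $x^{(k)} \in \mathfrak{h}_0^{(k)}$, a weight-by-weight expansion of $f \cdot \varphi = \mathrm{Ad}_f(\varphi) - (f; d(f)) \circledcirc f^{-1}$ using the structural identities of Lemma \ref{cal} and Lemma \ref{cal2} should yield
\[
f \cdot \varphi \;\equiv\; \varphi - d^{\psi}\!\left(x^{(n-1)} + x^{(n)}\right) \pmod{\mathcal{F}^{n+1}\mathfrak{g}_H} \ .
\]
The contributions quadratic and higher in $x$ -- coming both from $f^{-1} = \mathrm{id}_H - x + x\circledcirc x - \cdots$ and from iterated coproperadic decompositions -- land in weights $\geqslant 2(n-1) \geqslant n+1$ and hence vanish modulo $\mathcal{F}^{n+1}$ as soon as $n \geqslant 3$; the cases $n \in \{1,2\}$ are handled separately. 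The invertibility of $(n-1)!$ in $R$ legitimizes the combinatorial simplifications arising in this expansion, playing the role of the denominators of the $\frac{e^{\mathrm{ad}_\upsilon}-\mathrm{id}}{\mathrm{ad}_\upsilon}(d\upsilon)$ term used in the proof of Proposition \ref{obstru}.

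Granted this identity, the two implications follow quickly. For $(1)\Rightarrow(2)$: pick $y \in \mathfrak{h}_0$ with $d^\psi(y) \equiv \varphi^{(n)} \pmod{\mathcal{F}^{n+1}\mathfrak{h}}$ and, adapting the pigeonhole argument of Proposition \ref{obstructions}, replace $y$ by an element $x = x^{(n-1)} + x^{(n)}$ of pure low-weight support; the element $f \coloneqq \mathrm{id}_H + x$ then satisfies $f \cdot \varphi \equiv \psi$ by the identity above. For $(2)\Rightarrow(1)$: from a witnessing $f \in \Gamma_H$, reduce modulo $\mathcal{F}^{n+1}\mathfrak{g}_H$ and truncate the unused weight components to rewrite $f$ in the reduced form $\mathrm{id}_H + x^{(n-1)} + x^{(n)}$; the central identity then forces $\varphi^{(n)} \equiv d^\psi(x^{(n-1)} + x^{(n)}) \pmod{\mathcal{F}^{n+1}\mathfrak{h}}$, whence $\vartheta_n = 0$.

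The main obstacle is the verification of the central identity. It requires manipulating the compositions $\mathrm{Ad}_f(\varphi)$ and $(f; d(f)) \circledcirc f^{-1}$ via the relations of Lemma \ref{cal}, while carefully tracking weights so as to isolate the only surviving contributions modulo $\mathcal{F}^{n+1}$. The hypothesis $(n-1)!\in R^\times$ intervenes precisely at this step: it ensures that the properadic formula reduces, at this weight range, to its Lie-algebraic counterpart, and in particular that when $R$ is a $\mathbb{Q}$-algebra the statement recovers Proposition \ref{obstructions} through the graph exponential/logarithm correspondence of Theorem \ref{graphexp2}.
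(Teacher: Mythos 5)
Your reduction via Corollary \ref{coro isotopy} and your implication $(1)\Rightarrow(2)$ are essentially the paper's argument: one takes $f=1+\lambda$ with $\lambda$ a single low-weight component of a primitive of $\varphi^{(n)}$, and your ``central identity'' $f\cdot\varphi\equiv\varphi-d^{\psi}(\lambda)\pmod{\mathcal{F}^{n+1}\mathfrak{h}}$ for such a reduced isotopy is exactly \cite[Lemma~3.22]{Kaledin}, which the paper invokes and which holds over \emph{any} ring --- for a reduced $f$ no denominators appear, since every term involving $\lambda$ at least twice sits in weight $\geqslant 2(n-1)$. (In fact only $\lambda=\nu^{(n-1)}$ is needed: $d^{\psi}$ raises the weight by exactly one here, so the $x^{(n)}$ summand in your ansatz is irrelevant modulo $\mathcal{F}^{n+1}$.)

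The genuine gap is in $(2)\Rightarrow(1)$, at the step where you ``truncate the unused weight components'' of a witnessing isotopy $f$ to put it in the reduced form $\mathrm{id}_H+x^{(n-1)}+x^{(n)}$. Those components are not unused. Writing the $\infty$-morphism equation $f\rhd\varphi-\phi\lhd f=d(f)$ weight by weight, the weight-$n$ part has the shape $\varphi^{(n)}=d^{\psi}\bigl(f^{(n-1)}\bigr)+Q\bigl(f^{(1)},\dots,f^{(n-2)}\bigr)$, where $Q$ collects all graphs carrying at least two non-identity vertices of $f$; the lower-weight equations only give $d^{\psi}(f^{(1)})=0$ and analogous relations, not the vanishing of these components, so discarding them destroys the equation and $d^{\psi}(f^{(n-1)})$ alone need not represent $\vartheta_n$. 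Exhibiting $\varphi^{(n)}$ as a $d^{\psi}$-boundary requires untangling $Q$, and this is precisely what the paper does by setting $\nu\coloneqq\mathrm{log}(f)^{(n-1)}$ using the graph logarithm of Theorem \ref{graphexp2}: that weight-$(n-1)$ component is a universal polynomial in $f^{(1)},\dots,f^{(n-1)}$ whose denominators are controlled by $(n-1)!$. This also relocates the role of the hypothesis $(n-1)!\in R^{\times}$, which you attribute to the expansion of the action of a reduced isotopy (where it is not needed); it is needed to \emph{produce} the reduced gauge from an arbitrary witnessing isotopy. Without the logarithm, or an integral substitute for it, your $(2)\Rightarrow(1)$ is not established.
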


\begin{proof} $\leavevmode$
\begin{description}
	\item[$(1) \Rightarrow  (2)$] Let us suppose that $\vartheta_{n} = 0$~. There exists $\nu \in \mathfrak{g}_H $ of degree zero such that \[ \varphi^{(n)} = d^{\psi} (\nu)  \ .\] The element $\lambda \coloneqq \nu^{(n -1)}$ also satisfies $ \varphi^{(n)} = d^{\psi} \left(\lambda\right)$~. The $\infty$-isotopy defined by $f \coloneqq 1 + \lambda$ and the Maurer--Cartan element $\phi \coloneqq f \cdot \varphi$ satisfy the desired properties by \cite[Lemma~3.22]{Kaledin}.

		\item[$(2) \Rightarrow (1)$] Suppose that the structures are gauge $n$-homotopy equivalent with $\phi$ an $\Omega \C$-algebra structure such that $\phi - \psi \in \mathcal{F}^{n+1} \mathfrak{g}_H $ and $f : \varphi \rightsquigarrow \phi$ an $\infty$-isotopy. Let us prove that $\vartheta_{n} = 0$~, i.e. that there exists $\nu \in \mathfrak{g}_H $ of degree zero such that \[ \varphi^{(n)} = d^{\psi} (\nu)  \ .\] If $R$ is a $\mathbb{Q}$-algebra, by Theorem \ref{graphexp2} and Proposition \ref{obstru}, the gauge $\nu \coloneqq \mathrm{log}(f)$ and even $\nu \coloneqq \mathrm{log}(f)^{(n-1)}$ gives the desired result. Over any ring $R$ such that $(n-1)!$ is invertible in $R$, we claim that the component $\nu \coloneqq \mathrm{log}(f)^{(n-1)}$ is still well-defined and gives the desired result. Let us first prove that this component is well-defined. In the proof of Theorem \ref{graphexp2}, the authors prove by induction on the weight that the graph exponential map is surjective and thus construct the logarithm. To get from weight $k-1$ to weight $k$, they add the action of a finite sum of directed simple graphs with at most $k$ vertices on the components of lower weight. This added action is well defined if $k !$ is invertible in $R$. Let us now prove that $\nu \coloneqq \mathrm{log}(f)^{(n-1)}$ gives the desired result. This can be achieved in two different ways. One direct way is observing that the proof of the implication $(2) \Rightarrow (1)$ of Proposition \ref{graphexp2} holds modulo $\mathcal{F}^{n+1} \mathfrak{g}_H$ over $R$ since one is not dividing by any integer greater than $(n-1)!$. Otherwise, since \[\varphi - \psi \in \mathcal{F}^n \mathfrak{h} \quad \mbox{and} \quad f \circledcirc f^{-1} = 1 \ ,\] the induction on $k$ can be carried out again to prove that for all $1 \leqslant k \leqslant n $,    \[d \upsilon \in  \mathcal{F}^{k} \mathfrak{h}  \quad \mbox{and} \quad \varphi^{(n)}  \equiv d \upsilon  \pmod{\mathcal{F}^{n+1} \mathfrak{h}} \ . \] This implies that $\vartheta_{n} = 0$. 	    
 \qedhere
\end{description}
\end{proof}

\begin{construction} \label{constru bis}
Let $\varphi$ be an $\Cobar \C$-algebra structure on a graded $R$-module $H$. We would like to detect whether $\varphi$ and $\psi \coloneqq \varphi^{(1)}$ are gauge homotopy equivalent. Let us set \[\vartheta_2 \coloneqq \left[ \varphi^{(2)}\right] \in H_{-1}\left( \mathfrak{h} / \mathcal{F}^{3} \mathfrak{h}  \right) . \]  If $2 \leqslant \mathfrak{n}$, we have the two following cases. 
	\begin{itemize}
		\item[$\centerdot$] If $\vartheta_2 \neq 0$~, then $\varphi$ and $\psi$ are not gauge homotopy equivalent, by the implication $(2) \Rightarrow (1)$ of Proposition~\ref{obstru bis}. 
		\item[$\centerdot$] If $\vartheta_2 = 0$~, then $\varphi$ and $\psi$ are gauge $2$-homotopy equivalent, by the implication $(1) \Rightarrow (2)$ of Proposition \ref{obstru bis}. More precisely, $\varphi$ is gauge homotopy equivalent to an $\Cobar \C$-algebra structure $(H, \phi_3)$ such that $ \phi_3 - \psi \in  \mathcal{F}^{3} \mathfrak{h}$.
	\end{itemize}
	If $\vartheta_2 = 0$~, we consider \[\vartheta_3 \coloneqq \left[ \phi_3^{(3)}\right] \in H_{-1}\left( \mathfrak{h} / \mathcal{F}^{4} \mathfrak{h}  \right) . \] If $3 \leqslant \mathfrak{n}$, we have the two following cases. 
	\begin{itemize}
		\item[$\centerdot$] If $\vartheta_3 \neq 0$~, then $\varphi$ and $\psi$ are not gauge homotopy equivalent, by the implication $(2) \Rightarrow (1)$ of Proposition~\ref{obstru bis}. 
		\item[$\centerdot$] If $\vartheta_3 = 0$~, then $\varphi$ and $\psi$ are gauge $3$-homotopy equivalent, by the implication $(1) \Rightarrow (2)$ of Proposition \ref{obstru bis}. More precisely, $\phi_3$ and thus $\varphi$ is gauge homotopy equivalent to an $\Cobar \C$-algebra structure $(H, \phi_4)$ such that $ \phi_4 - \psi \in  \mathcal{F}^{4} \mathfrak{h}$.
	\end{itemize}
		If $\vartheta_3 = 0$~, we consider \[\vartheta_4 \coloneqq \left[ \phi_4^{(4)}\right] \in H_{-1}\left( \mathfrak{h} / \mathcal{F}^{5} \mathfrak{h}  \right) . \]  Once again, if $4 \leqslant \mathfrak{n}$,  the following assertions hold by Proposition \ref{obstru bis}. 
	\begin{itemize}
		\item[$\centerdot$] If $\vartheta_4 \neq 0$~, then $\varphi$ and $\psi$ are not gauge homotopy equivalent. 
		\item[$\centerdot$] If $\vartheta_4 = 0$~, then $\varphi$ and $\psi$ are gauge $4$-homotopy equivalent.
	\end{itemize}

	\noindent The construction of such obstruction classes can be performed higher up in a similar way until reaching a non-zero class or until the level $\mathfrak{n}$. This leads to a sequence of classes $\left(\vartheta_k \right)_{1 \leqslant k \leqslant n}$ which is either 
	\begin{itemize}

		\item[$\centerdot$]  a sequence of trivial classes that ends on a nonzero class $\vartheta_{n}$~, with $n \leqslant \mathfrak{n}$, or 
		\item[$\centerdot$]a sequence of trivial classes that ends on a class $\vartheta_{\mathfrak{n}+1}$ (which vanishes or not) when $n = \mathfrak{n}+1$~.
	\end{itemize}  Any such sequence is not unique and depends on the choices made at each level. 
\end{construction}

\begin{definition}[Homotopy equivalence obstruction sequence]\label{gauge triviality sequ bis}
	A \emph{homotopy equivalence obstruction sequence} associated to an $\Cobar \C$-algebra structures $\varphi $ and its first component $\varphi^{(1)}$ is an obstruction sequence $\left(\vartheta_k \right)_{1 \leqslant k \leqslant n}$~, for $n\leqslant \mathfrak{n}+1$~, obtained through Construction \ref{constru bis}. 
\end{definition}

\begin{lemma}\label{gauge deg bis}  
	Let $\left(\vartheta_k \right)_{1 \leqslant k \leqslant n}$ be a homotopy equivalence obstruction sequence of $\varphi$ and $\psi$~, with $n\leqslant \mathfrak{n}+1$. For any other homotopy equivalence obstruction sequence $\left(\vartheta_k' \right)_{1 \leqslant k \leqslant m}$, we have $m=n$~. 
\end{lemma}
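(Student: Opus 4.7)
The plan is to follow the strategy used in the proof of Lemma \ref{gauge deg}, reformulating the question in terms of an intrinsic property of the pair $(\varphi, \psi = \varphi^{(1)})$. Although each class $\vartheta_k$ depends on the auxiliary data $(\phi_k)$ chosen inductively through Construction \ref{constru bis}, the first claim to establish is that its vanishing depends only on $\varphi$ and $\psi$. Once this is granted, both sequences must terminate at the same index.

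First I would fix two homotopy equivalence obstruction sequences $\left(\vartheta_k \right)_{1 \leqslant k \leqslant n}$ and $\left(\vartheta_k' \right)_{1 \leqslant k \leqslant m}$, arising from intermediate $\Omega \C$-algebra structures $(\phi_k)$ and $(\phi_k')$, each gauge homotopy equivalent to $\varphi$ and satisfying $\phi_k - \psi,\, \phi_k' - \psi \in \mathcal{F}^k \mathfrak{h}$. Applying Proposition \ref{obstru bis} with $\phi_k$ (respectively $\phi_k'$) in the role of $\varphi$, the classes $\vartheta_k$ and $\vartheta_k'$ vanish if and only if $\phi_k$ and $\phi_k'$ are respectively gauge $k$-homotopy equivalent to $\psi$. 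Since gauge homotopy equivalence is an equivalence relation, obtained by composing $\infty$-isomorphisms through Theorem \ref{invertibility prop}, both conditions reduce to the single intrinsic condition that $\varphi$ and $\psi$ be gauge $k$-homotopy equivalent.

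Consequently, the terminal index $n$ is characterized purely in terms of the pair $(\varphi, \psi)$: it is the smallest integer in the range $[\![2, \mathfrak{n}+1]\!]$ such that either $\varphi$ and $\psi$ fail to be gauge $n$-homotopy equivalent or the boundary $n = \mathfrak{n}+1$ is reached. The same description applies verbatim to $m$, yielding $m = n$.

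The hard part to keep track of is the hypothesis on the invertibility of factorials in $R$: Proposition \ref{obstru bis} only applies at step $k$ when $(k-1)!$ is invertible in $R$, i.e.\ for $k \leqslant \mathfrak{n}+1$. This is precisely the range in which obstruction classes are produced through Construction \ref{constru bis}, so no mismatch arises and the intrinsic characterization of the terminal index remains valid over any commutative ground ring.
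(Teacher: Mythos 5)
Your proposal is correct and follows essentially the same route as the paper, which simply transports the proof of Lemma \ref{gauge deg} to this setting by invoking Proposition \ref{obstru bis} in both directions and replacing $\mathrm{BCH}$-composition of gauges by $\circledcirc$-composition of $\infty$-isomorphisms. Your repackaging of the case-by-case contradiction argument as an intrinsic characterization of the terminal index (smallest $k$ at which gauge $k$-homotopy equivalence of $\varphi$ and $\psi$ fails, capped at $\mathfrak{n}+1$) is a mild streamlining of the same mechanism, and you correctly track the factorial-invertibility hypothesis that bounds the range of applicability of Proposition \ref{obstru bis}.
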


\begin{proof}
	The proof uses Proposition \ref{obstru bis} in the way similar to the proof of Lemma \ref{gauge deg}. In this case, we compose $\infty$-isomorphisms through the product $\circledcirc$ instead of composing gauges through the $\mathrm{BCH}$ formula. 
\end{proof}

\begin{definition}[Gauge triviality degree]
Given an $\Cobar \C$-algebra structures $\varphi $, the element $n\leqslant \mathfrak{n}+1$ which indices the last class of a homotopy equivalence obstruction sequence associated to $\varphi $ and its first component $\varphi^{(1)}$ is called the \emph{gauge triviality degree} of $\varphi$. 
\end{definition}

\begin{theorem}\label{suites d'obstructions2}
Let $\varphi$ be an $\Cobar \C$-algebra structure on a graded $R$-module $H$. 
	\begin{enumerate}
	\item[(I)] The following assertions are equivalent. 
	\begin{enumerate}
		\item[(1)] The gauge triviality degree of $\varphi$ is equal to $ \mathfrak{n} + 1$~.
		\item[(2)] The $\Cobar \C$-algebra structures $\varphi$ and $\varphi^{(1)}$ are gauge  $\mathfrak{n}$-homotopy equivalent.  
	\end{enumerate}   
	\medskip
	\item[(II)] The following assertions are equivalent. 
	\begin{enumerate}
	\item[(1)] The gauge triviality degree of $\varphi$ is equal to $n \leqslant \mathfrak{n}$~.
	\item[(2)] The $\Cobar \C$-algebra structures $\varphi$ and $\varphi^{(1)}$ are gauge  $(n-1)$-homotopy equivalent but not gauge $n$-homotopy equivalent.  
\end{enumerate} 
	\end{enumerate}
\end{theorem}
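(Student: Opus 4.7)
The plan is to mirror the proof of Theorem~\ref{suites d'obstru}, with Proposition~\ref{obstru bis} playing the role of Proposition~\ref{obstru} and the group $(\Gamma_{H}, \circledcirc)$ of $\infty$-isotopies replacing the gauge group (the latter being unavailable since $R$ need not contain~$\mathbb{Q}$). Fix a homotopy equivalence obstruction sequence $(\vartheta_k)_{2 \leqslant k \leqslant n}$ produced by Construction~\ref{constru bis}, together with its associated $\Omega\C$-algebra structures $(\phi_k)$ satisfying $\phi_k - \psi \in \mathcal{F}^k \mathfrak{h}$ and $\infty$-isotopies $f_k : \phi_k \rightsquigarrow \phi_{k+1}$, where $\phi_2 = \varphi$ and $\psi = \varphi^{(1)}$.

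For the direct implication of Part~(I): if the gauge triviality degree equals $\mathfrak{n}+1$, the vanishing of $\vartheta_2, \ldots, \vartheta_\mathfrak{n}$ allows the construction to produce $\phi_{\mathfrak{n}+1}$ with $\phi_{\mathfrak{n}+1} - \psi \in \mathcal{F}^{\mathfrak{n}+1}\mathfrak{h}$, and the composite $f_{\mathfrak{n}} \circledcirc \cdots \circledcirc f_2 : \varphi \rightsquigarrow \phi_{\mathfrak{n}+1}$ witnesses gauge $\mathfrak{n}$-homotopy equivalence. Conversely, assume there exists an $\infty$-isotopy $g : \varphi \rightsquigarrow \phi$ with $\phi - \psi \in \mathcal{F}^{\mathfrak{n}+1}\mathfrak{h}$. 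I would show by induction on $2 \leqslant k \leqslant \mathfrak{n}$ that $\vartheta_k = 0$: by Theorem~\ref{invertibility prop} the composite $g \circledcirc (f_{k-1} \circledcirc \cdots \circledcirc f_2)^{-1}$ defines an $\infty$-isotopy from $\phi_k$ to $\phi$, and since $\phi - \psi \in \mathcal{F}^{\mathfrak{n}+1}\mathfrak{h} \subset \mathcal{F}^{k+1}\mathfrak{h}$, the structures $\phi_k$ and $\psi$ are gauge $k$-homotopy equivalent; the implication $(2) \Rightarrow (1)$ of Proposition~\ref{obstru bis} at level~$k$ (applicable since $k \leqslant \mathfrak{n}$ makes $(k-1)!$ invertible in $R$) then forces $\vartheta_k = 0$. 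By Lemma~\ref{gauge deg bis}, the degree equals $\mathfrak{n}+1$.

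Part~(II) follows the same skeleton. For $(1) \Rightarrow (2)$: the vanishing of $\vartheta_2, \ldots, \vartheta_{n-1}$ yields the gauge $(n-1)$-homotopy equivalence exactly as above, while any hypothetical gauge $n$-homotopy equivalence would, via the same composite-isotopy argument combined with Proposition~\ref{obstru bis} at level~$n$, force $\vartheta_n = 0$, contradicting its nontriviality. For $(2) \Rightarrow (1)$: well-definedness of the triviality degree $m$ is granted by Lemma~\ref{gauge deg bis}; the gauge $(n-1)$-homotopy equivalence forces $m \geqslant n$ by the same induction as in~(I), while if one had $m \geqslant n+1$ then the vanishing of $\vartheta_2, \ldots, \vartheta_n$ and the construction would yield $\phi_{n+1}$ with $\phi_{n+1} - \psi \in \mathcal{F}^{n+1}\mathfrak{h}$, witnessing gauge $n$-homotopy equivalence and contradicting the hypothesis. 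Hence $m = n$.

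The main obstacle is the absence of a general gauge-group formalism: since $R$ need not contain~$\mathbb{Q}$, neither the exponential correspondence of Theorem~\ref{graphexp2} nor the $\mathrm{BCH}$ formula is available, and every compositional manipulation must be carried out inside $(\Gamma_{H}, \circledcirc)$ using the two-sided inverses granted by Theorem~\ref{invertibility prop}. The factorial-invertibility constraint enters only through Proposition~\ref{obstru bis}, which is precisely why the construction (and hence the theorem statement) must be truncated at level $\mathfrak{n}+1$ and split into the two separate cases.
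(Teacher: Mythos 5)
Your proposal is correct and follows essentially the same route as the paper, whose proof consists precisely of the instruction to repeat the argument of Theorem~\ref{suites d'obstru} with Proposition~\ref{obstru bis} in place of Proposition~\ref{obstru} and with composition of $\infty$-isotopies via $\circledcirc$ in place of the $\mathrm{BCH}$ formula. You have simply spelled out the details (indexing from $k=2$, the truncation at level $\mathfrak{n}+1$ forced by the factorial-invertibility hypothesis, and the use of Theorem~\ref{invertibility prop} and Lemma~\ref{gauge deg bis}) that the paper leaves implicit.
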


\begin{proof}
	The proof is the same one that the proof of Theorem \ref{suites d'obstru}, using Proposition \ref{obstru bis} instead of Proposition \ref{obstru} and composing $\infty$-isotopies through the product $\circledcirc$ instead of composing gauges through the $\mathrm{BCH}$ formula. 	
\end{proof}

\begin{definition}
 Let $(A, \varphi)$ be an $\Cobar \C$-algebra structure admitting a transferred structure. Its \emph{gauge triviality degree} is the gauge triviality degree of $\varphi_t$. 
\end{definition}

\begin{theorem}\label{obstru en char p}
	Let $R$ be a ring which is not a $\mathbb{Q}$-algebra. Let $\mathfrak{n}$ be the smallest integer not invertible in $R$.  Let $(A, \varphi)$ be an $\Cobar \C$-algebra structure admitting a transferred structure. 
	\begin{enumerate}
		\item[$(I)$] The following assertions are equivalent. 
		\begin{enumerate}
			\item[$(1)$] The gauge triviality degree of $\varphi$ is equal to $ \mathfrak{n} + 1$~.
			\item[$(2)$] The $\Cobar \C$-algebra structure $(A, \varphi)$  is gauge  $(\mathfrak{n-1})$-formal.  
		\end{enumerate}   
		\item[$(II)$] The following assertions are equivalent. 
		\begin{enumerate}
			\item[$(3)$] The gauge triviality degree of $\varphi$ is equal to $n \in \mathbb{N}$~.
			\item[$(4)$]  The $\Cobar \C$-algebra structure $(A, \varphi)$  is gauge  $(n-2)$-formal but not gauge  $(n-1)$-formal. 
		\end{enumerate}	
	\end{enumerate}
\end{theorem}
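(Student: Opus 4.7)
The plan is to reduce this statement to Theorem \ref{suites d'obstructions2} applied to the transferred structure $(H(A), \varphi_t)$, in exactly the spirit of how Theorem \ref{obstru en char 0} was deduced from Theorem \ref{suites d'obstru}. By definition, the gauge triviality degree of $(A, \varphi)$ coincides with the gauge triviality degree of $\varphi_t$, and the first weight component satisfies $\varphi_t^{(1)} = \varphi_*$. Theorem \ref{suites d'obstructions2} therefore records, in terms of obstruction classes on $\mathfrak{g}_{H(A)}$, the largest $k \leqslant \mathfrak{n} + 1$ for which $\varphi_t$ and $\varphi_*$ are gauge $k$-homotopy equivalent as $\Cobar \C$-algebra structures on $H(A)$.

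The remaining step is to translate this into a statement about the gauge formality of $(A, \varphi)$. For this I would invoke Corollary \ref{coro isotopy}: applied with $(B, \psi) = (H(A), \varphi_*)$, it asserts that $(A, \varphi)$ and $(H(A), \varphi_*)$ are gauge $(m+1)$-homotopy equivalent if and only if there exist an isomorphism $\iota : H(A) \to H(A)$ and $f \in \Gamma_{H(A)}$ with
\[
f \cdot \varphi_t \equiv \iota \cdot \varphi_* \pmod{\mathcal{F}^{m+2} \mathfrak{g}_{H(A)}} \ .
\]
The same corollary applied to the pair $(H(A), \varphi_t)$ and $(H(A), \varphi_*)$ shows that this is exactly the condition for $\varphi_t$ and $\varphi_*$ to be gauge $(m+1)$-homotopy equivalent as $\Cobar \C$-algebra structures on $H(A)$. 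Combined with the definition of gauge $m$-formality, this yields the dictionary
\[
(A, \varphi) \text{ is gauge } m\text{-formal} \iff \varphi_t \text{ and } \varphi_* \text{ are gauge } (m+1)\text{-homotopy equivalent.}
\]

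Feeding this dictionary into Theorem \ref{suites d'obstructions2} and tracking the shift by one in the indices gives both parts of the statement. For Point (I), a gauge triviality degree equal to $\mathfrak{n} + 1$ corresponds, via Theorem \ref{suites d'obstructions2}(I), to $\varphi_t$ and $\varphi_*$ being gauge $\mathfrak{n}$-homotopy equivalent, hence to $(A, \varphi)$ being gauge $(\mathfrak{n}-1)$-formal. For Point (II), a gauge triviality degree $n \leqslant \mathfrak{n}$ corresponds, via Theorem \ref{suites d'obstructions2}(II), to gauge $(n-1)$- but not gauge $n$-homotopy equivalence, hence to $(A, \varphi)$ being gauge $(n-2)$-formal but not gauge $(n-1)$-formal. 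No new argument is required beyond the two preceding results; the only genuinely delicate point, and the main place where one can make an error, is the careful bookkeeping of the index conventions relating gauge $k$-homotopy equivalence and gauge $m$-formality.
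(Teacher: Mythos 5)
Your proposal is correct and follows exactly the paper's route: the paper's own proof is a one-line reduction to Theorem \ref{suites d'obstructions2} via Definition \ref{definition formality prop}, which is precisely your dictionary between gauge $m$-formality and gauge $(m+1)$-homotopy equivalence of $\varphi_t$ and $\varphi_*$. Your additional appeal to Corollary \ref{coro isotopy} to pass between the pair $(A,\varphi)$, $(H(A),\varphi_*)$ and the pair $(H(A),\varphi_t)$, $(H(A),\varphi_*)$ is a harmless (and welcome) elaboration of a step the paper leaves implicit, and your index bookkeeping matches the stated shifts.
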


\begin{proof}
	This result is a direct application of Theorem \ref{suites d'obstructions2} using Definition \ref{definition formality prop}.
\end{proof}

\subsection{Interlude : Properadic Kaledin classes over any coefficient ring}\label{2.5}
In \cite[Section~3]{Kaledin}, we construct properadic Kaledin classes over a characteristic zero field studying a particular case of homotopy equivalences called \emph{gauge formality}. As explained in this section, this construction generalizes to any commutative ground ring as in the operadic case. We also make precise the like between homotopy equivalence obstruction sequences and Kaledin classes in the formality setting. Let $\C$ be a weight-graded coproperad.

\begin{definition}[Properadic Kaledin classes]
	Let $\varphi \in \mathrm{MC}(\mathfrak{g}_H)$ be an $\Cobar \C$-algebra structure on a graded $R$-module $H$ and let \[\mathfrak{D}(\varphi) \coloneqq  \varphi^{(1)} + \varphi^{(2 )} \hbar + \varphi^{(3)} \hbar^2 + \cdots \] be its prismatic decomposition. The \emph{Kaledin class} $K_{\varphi}$ is the Kaledin class of $\mathfrak{D}(\varphi)$, i.e. \[K_{\varphi} =  \left[\partial_{\hbar}   \mathfrak{D}(\varphi) \right] \in H_{-1}\left(\mathfrak{g}_H[\![\hbar]\!]^{\Phi}\right) \ . \] Its \emph{$n^{\text{th}}$-truncated Kaledin class} $K_{\varphi}^n$ is the $n^{\text{th}}$-truncated Kaledin class of $\mathfrak{D}(\varphi)$, i.e. \[ K^n_{\varphi} = \left[\varphi^{(2)} + 2 \varphi^{(3)} \hbar + \dots + n \varphi^{(n+1)} \hbar^{n-1} \right]  \in H_{-1}\left(\left(\mathfrak{g}_H[\![\hbar]\!]/(\hbar^{n })\right)^{\overline{\Phi}^n}\right) \ .  \] 
\end{definition}

\begin{remark}
	Let $\varphi$ be an $\Cobar \C$-algebra structure on a graded $R$-module $H$.  Let us set $\mathfrak{h} \coloneqq \mathfrak{g}^{\varphi^{(1)}}$. If $ \varphi - \varphi^{(1)} \in \mathcal{F}^n \mathfrak{h}$, for $n > 1$, consider the obstruction class  \[\vartheta_{n} \coloneqq \left[\varphi^{(n)} \right] \in H_{-1}\left( \mathfrak{h} / \mathcal{F}^{n+1}\mathfrak{h}  \right) \ ,\] given by Proposition \ref{obstru bis}. It corresponds to the Kaledin class  \[ K^{n-1}_{\varphi} = \left[ (n - 1) \varphi^{(n)} \hbar^{n-2} \right]  \in H_{-1}\left(\mathfrak{h}[\![\hbar]\!]/(\hbar^{n -1 })\right) \ .  \] 
\end{remark}

\begin{proposition}\label{Kaledin prop} 	Let $R$ be a commutative ground ring and let $\C$ be a reduced weight-graded dg cooperad over $R$. Let $\varphi \in \mathrm{MC}(\mathfrak{g}_H)$ be an $\Cobar \C$-algebra structure on a graded $R$-module $H$~. 
	\begin{enumerate}
		\item  Let $n \geqslant 1$ be an integer such that $n !$ is a unit in $R$. The $n^{\text{th}}$-truncated Kaledin class $K^n_{\varphi}$  is zero if and only if there exists an $\infty$-isotopy $f : \varphi \rightsquigarrow  \psi $ where $\psi \in \mathrm{MC}(\mathfrak{g}_H)$ is such that $\psi^{(k)} = 0$~, for $2 \leqslant k \leqslant n + 1 $. 
		
		\item If $R$ is a $\QQ$-algebra, the Kaledin class  $K_{\varphi}$ vanishes if and only if there exists an $\infty$-isotopy $f : \varphi \rightsquigarrow \varphi^{(1)} $~.
	\end{enumerate}
\end{proposition}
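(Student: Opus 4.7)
The plan is to derive \Cref{Kaledin prop} from the obstruction theory of \Cref{2.4}, by identifying the truncated Kaledin class with the data of a homotopy equivalence obstruction sequence, and then composing $\infty$-isotopies via $\circledcirc$ rather than assembling gauges via the $\mathrm{BCH}$ formula.

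For Point~(1), I first treat the ``only if'' direction. Assuming $K^n_\varphi = 0$, I pick a primitive $\lambda = \lambda_0 + \lambda_1 \hbar + \cdots + \lambda_{n-1}\hbar^{n-1}$ in $(\mathfrak{g}_H[\![\hbar]\!]/(\hbar^n))^{\overline{\Phi}^n}_0$ for $\varphi^{(2)} + 2\varphi^{(3)}\hbar + \cdots + n\varphi^{(n+1)}\hbar^{n-1}$. Extracting the $\hbar^0$-component and setting $\mathfrak{h} \coloneqq \mathfrak{g}_H^{\varphi^{(1)}}$, this forces $d^{\varphi^{(1)}}(\lambda_0) = \varphi^{(2)}$, so the obstruction class $\vartheta_2 = [\varphi^{(2)}] \in H_{-1}(\mathfrak{h}/\mathcal{F}^3\mathfrak{h})$ from \Cref{obstru bis} vanishes. \Cref{obstru bis} then yields an $\infty$-isotopy $f_2$ with $(f_2 \cdot \varphi)^{(k)} = \varphi^{(k)}$ for $k \leqslant 1$ and $(f_2 \cdot \varphi)^{(2)} = 0$. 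Transporting $K^n_\varphi$ under the isomorphism $\mathrm{Ad}_{f_2}$ of \Cref{Adf properadic}, which preserves the weight grading and takes $\varphi$ to $f_2 \cdot \varphi$, I then extract the next non-trivial component; its vanishing exhibits $\vartheta_3 = 0$, and \Cref{obstru bis} (applicable since $2! \mid n!$ is a unit) produces $f_3$ killing the weight-$3$ component. Iterating up to weight $n+1$, where at each step $(k-1)!$ divides $n!$ and is therefore invertible in $R$, gives $f \coloneqq f_{n+1} \circledcirc \cdots \circledcirc f_2$ such that $\psi \coloneqq f \cdot \varphi$ satisfies $\psi^{(k)} = 0$ for $2 \leqslant k \leqslant n+1$.

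Conversely, given $f \colon \varphi \rightsquigarrow \psi$ with $\psi^{(k)} = 0$ in that range, the dg Lie algebra isomorphism $\mathrm{Ad}_f \colon \mathfrak{g}_H^{\varphi^{(1)}} \to \mathfrak{g}_H^{\psi^{(1)}}$ of \Cref{Adf properadic} extends $\hbar$-linearly to an isomorphism of the underlying complexes computing the truncated Kaledin classes. Under this isomorphism, $K^n_\varphi$ maps to $K^n_\psi$, which is zero because the representative $\psi^{(2)} + 2\psi^{(3)}\hbar + \cdots + n \psi^{(n+1)}\hbar^{n-1}$ is identically zero.

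For Point~(2), in a $\mathbb{Q}$-algebra every $n!$ is invertible, so Point~(1) applies for every $n \geqslant 1$. Vanishing of $K_\varphi$ implies vanishing of all its truncations $K^n_\varphi$, producing a coherent system of $\infty$-isotopies $f_n$ with $(f_n \cdot \varphi)^{(k)} = 0$ for $2 \leqslant k \leqslant n+1$. By \Cref{graphexp2}, each $f_n$ corresponds to a gauge $\mathrm{log}(f_n)$ in $(\mathfrak{g}_H)_0$; the iterative nature of the construction in Point~(1) ensures that these gauges can be chosen compatibly, so that their weight-wise $\mathrm{BCH}$ composition converges in the complete dg Lie algebra $\mathfrak{g}_H$ and exponentiates back to an $\infty$-isotopy $\varphi \rightsquigarrow \varphi^{(1)}$. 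The main subtlety to check carefully will be the functoriality of the truncated Kaledin class under $\mathrm{Ad}_f$ and the compatibility of the inductive choices in Point~(1), ensuring that the data extracted at each weight assembles correctly into the primitive of $K^n_\varphi$, and that passing to the limit in Point~(2) does not create convergence issues outside of the $\mathbb{Q}$-algebra assumption.
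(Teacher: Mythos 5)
Your overall strategy coincides with the paper's: both directions of Point~(1) rest on (i) the invariance of truncated Kaledin classes under the adjoint action of $\infty$-isomorphisms and (ii) a weight-by-weight killing of $\varphi^{(2)},\dots,\varphi^{(n+1)}$ via $\infty$-isotopies of the form $1+\lambda$, dividing by $k$ at weight $k+1$ (whence the hypothesis on $n!$). The paper organizes the forward direction as an induction on the truncation level $n$ (using $K^n_\varphi=0\Rightarrow K^{n-1}_\varphi=0$), whereas you sweep through the weights under the single hypothesis $K^n_\varphi=0$; these are equivalent. For Point~(2) you supply an actual convergence argument (each successive correction lies one step deeper in the filtration, so the infinite $\circledcirc$-composite converges), where the paper defers to an external reference; that is a welcome addition, though you only treat the direction ``$K_\varphi=0\Rightarrow$ isotopy'' and omit the converse, which again follows from the invariance lemma applied to $K_{\varphi^{(1)}}=0$.

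There is, however, one step that fails as literally written. In the converse of Point~(1) you claim that $\mathrm{Ad}_f\colon\mathfrak{g}_H^{\varphi^{(1)}}\to\mathfrak{g}_H^{\psi^{(1)}}$ ``extends $\hbar$-linearly to an isomorphism of the underlying complexes computing the truncated Kaledin classes.'' The complex computing $K^n_\varphi$ is $\left(\mathfrak{g}_H[\![\hbar]\!]/(\hbar^{n})\right)^{\overline{\Phi}^n}$ with $\overline{\Phi}^n=\varphi^{(1)}+\varphi^{(2)}\hbar+\cdots$, and the $\hbar$-linear extension of $\mathrm{Ad}_f$ does not intertwine these prismatic twisted differentials (their higher $\hbar$-components mix weights in a way that the untwisted $\mathrm{Ad}_f$ does not see). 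The correct comparison map is $\mathrm{Ad}_{\mathfrak{D}(f)}$, built from the prismatic decomposition $\mathfrak{D}(f)=f^{(0)}+f^{(1)}\hbar+\cdots$, and the identity $\mathrm{Ad}^n_{\mathfrak{D}(f)}\bigl(K^n_\varphi\bigr)=K^n_\psi$ is precisely the paper's Lemma \ref{piquant prop}, whose proof is a genuine computation (several applications of formula (8) of Lemma \ref{cal} together with Lemma \ref{cal2}). You flag this ``functoriality'' as the main subtlety to check, which is the right instinct, but it is not a routine verification and the map must be corrected before the check can even be set up. The same correction is needed at each stage of your forward induction, where you ``transport $K^n_\varphi$ under $\mathrm{Ad}_{f_2}$'': the transported class is $K^n$ of $f_2\cdot\varphi$ computed via $\mathrm{Ad}^n_{\mathfrak{D}(f_2)}$, and note also that $\mathrm{Ad}_{f_2}$ does not preserve the weight grading (it is only filtered) and that the new Maurer--Cartan element is $f_2\cdot\varphi=\mathrm{Ad}_{f_2}(\varphi)-(f_2;d(f_2))\circledcirc f_2^{-1}$, not $\mathrm{Ad}_{f_2}(\varphi)$.
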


\begin{definition}
	The \emph{prismatic decomposition} of $f  \in \mathfrak{a}_H$ in degree zero is defined by \[\mathfrak{D} \left(f\right) \coloneqq  f^{(0)} + f^{(1)}\hbar +  f^{(2)}\hbar^2 + \cdots \in \mathfrak{a}_H[\![\hbar]\!] \ .\] 
\end{definition}

\begin{remark}
	The pre-Lie product $\star$, the circle product $\circledcirc$ and the left and right actions $\lhd$ and $\rhd$ extend by $\hbar$-linearity to $\mathfrak{a}_H[\![\hbar]\!]$.  Furthermore, if $f : \varphi \rightsquigarrow \psi$ is an $\infty$-isomorphism then  \[\mathfrak{D} \left(f\right) \rhd \mathfrak{D} \left( \varphi \right) -  \mathfrak{D} \left( \psi \right) \lhd  \mathfrak{D} \left(f\right) =  d \left( \mathfrak{D} \left(f\right) \right) \ .\]
\end{remark}

\begin{lemma} Let $n \geqslant 1$ and let $f : \varphi \rightsquigarrow \psi$ be an $\infty$-isomorphism. Let us denote $\Phi \coloneqq \mathfrak{D}(\varphi)$~, $\Psi \coloneqq \mathfrak{D}(\psi)$ and $F \coloneqq \mathfrak{D}(f)$. There are isomorphisms of dg Lie algebras \[\mathrm{Ad}_F : \mathfrak{g}_H[\![\hbar]\!]^{\Phi} \to \mathfrak{g}_H[\![\hbar]\!]^{\Psi} \quad \mbox{and} \quad \mathrm{Ad}_{F}^n :  \left(\mathfrak{g}_H[\![\hbar]\!] /(\hbar^{n}) \right)^{\overline{\Phi}^n} \to  \left(\mathfrak{g}_H[\![\hbar]\!] /(\hbar^{n}) \right)^{\overline{\Psi}^n}   \] induced by the mapping $x \mapsto (F ; F \star x) \circledcirc F^{-1}$.
\end{lemma}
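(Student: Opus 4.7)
The plan is to reduce this statement to Proposition \ref{Adf properadic} by $\hbar$-linear extension, so most of the work has already been done. The only genuinely new ingredients are (i) that $F$ is invertible as an element of $\mathfrak{a}_H^{\psi}_H[\![\hbar]\!]$, (ii) that $F$ realizes an $\infty$-isomorphism $\Phi \rightsquigarrow \Psi$ in this extended convolution algebra, and (iii) that passing to the quotient modulo $(\hbar^n)$ is compatible with all the operations involved.

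First I would check that $F = \mathfrak{D}(f)$ is $\circledcirc$-invertible in $\mathfrak{a}_H[\![\hbar]\!]$. The constant term $F^{(0)} = f^{(0)}$ satisfies that $(f^{(0)})^{(0)} \in \Hom(A,B)$ is an isomorphism, so by Theorem \ref{invertibility prop} the element $f^{(0)}$ admits a $\circledcirc$-inverse. Then an inductive construction weight-by-weight in $\hbar$ yields a unique $F^{-1} = \mathfrak{D}(f)^{-1} \in \mathfrak{a}_H[\![\hbar]\!]$ with $F \circledcirc F^{-1} = F^{-1} \circledcirc F = 1$; alternatively, one notes that $\mathfrak{D}(f^{-1})$ does the job by $\hbar$-linearity of $\circledcirc$.

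Next, I would verify that $F$ is an $\infty$-isomorphism $\Phi \rightsquigarrow \Psi$ in $\mathfrak{a}_H[\![\hbar]\!]$. The relation $f \rhd \varphi - \psi \lhd f = d(f)$ decomposes weightwise in the coproperad and, after organizing the pieces according to their weight in $\hbar$, gives $F \rhd \Phi - \Psi \lhd F = d(F)$, as already noted in the paper right before the lemma. With this in hand, the hypotheses of Proposition \ref{Adf properadic} are met for $F$ viewed over the ground ring $R[\![\hbar]\!]$. Applying that proposition verbatim yields that $\mathrm{Ad}_F(x) = (F; F \rhd x) \circledcirc F^{-1}$ is an isomorphism of dg Lie algebras from $\mathfrak{g}_H[\![\hbar]\!]^{\Phi}$ to $\mathfrak{g}_H[\![\hbar]\!]^{\Psi}$, with inverse $\mathrm{Ad}_{F^{-1}}$.

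Finally, for the truncated statement, I would observe that the ideal $(\hbar^n) \subset R[\![\hbar]\!]$ is preserved by $\star$, $\circledcirc$, $\lhd$, $\rhd$, and the differentials $d^{\Phi}$, $d^{\Psi}$, because all these operations are $\hbar$-linear and compatible with the filtration. Consequently $\mathrm{Ad}_F$ descends to a well-defined map $\mathrm{Ad}_F^n$ on the quotients, and the identities $\mathrm{Ad}_F \circ \mathrm{Ad}_{F^{-1}} = \mathrm{id}$ and the Lie-bracket compatibility descend as well. The only step requiring any care is making sure that the truncated twisted differentials $d^{\overline{\Phi}^n}$ and $d^{\overline{\Psi}^n}$ are the ones induced by $d^{\Phi}$ and $d^{\Psi}$ on the quotient, which is immediate from the construction of the prismatic decomposition. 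I do not expect a significant obstacle here: the lemma is essentially a formal $\hbar$-linear extension of Proposition \ref{Adf properadic}, so the substance of the proof is a careful bookkeeping argument rather than a new computation.
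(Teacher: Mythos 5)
Your proposal is correct and follows exactly the route the paper takes: its entire proof reads ``The proof of Proposition \ref{Adf properadic} holds \emph{mutatis mutandis} by $\hbar$-linearity,'' and your three verification steps (invertibility of $F$, the $\infty$-isomorphism relation $F \rhd \Phi - \Psi \lhd F = d(F)$ already recorded in the remark preceding the lemma, and compatibility with the quotient by $(\hbar^n)$) are precisely the bookkeeping that sentence leaves implicit.
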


\begin{proof}
	The proof of Proposition \ref{Adf properadic} holds \emph{mutatis mutandis} by $\hbar$-linearity. 
\end{proof}

\begin{lemma}[Invariance of Kaledin classes under $\infty$-isomorphisms]\label{piquant prop} Let $f : \varphi \rightsquigarrow \psi$ be an $\infty$-isomorphism. Let us set $\Phi \coloneqq \mathfrak{D}(\varphi)$~, $\Psi \coloneqq \mathfrak{D}(\psi)$ and $F \coloneqq \mathfrak{D}(f)$. We have \[ \Psi = 	\mathrm{Ad}_{F} ( \Phi) - \left(F ; d(F)  \right) \circledcirc F^{-1} \ . \]
	\begin{enumerate}
		\item The classes $K_{\Psi}$ and $\mathrm{Ad}_{F}\left(K_{\Phi}\right)$ are equal in \[H_{-1}\left(\mathfrak{g}_H[\![\hbar]\!]^{\Psi}\right) \ ,\] where we still denote by $\mathrm{Ad}_{F}$ the induced isomorphism on homology. 
		\item For all $n \geqslant 1$, the classes $K_{\Psi}^n$ and $\mathrm{Ad}_{F}^n\left(K_{\Phi}^n\right)$ are equal in \[H_{-1}\left(\left(\mathfrak{g}_H[\![\hbar]\!] /(\hbar^{n}) \right)^{\overline{\Psi}^n}\right) \ ,\] where we still denote by $\mathrm{Ad}_{F}^n$ the induced isomorphism on homology. 
	\end{enumerate}
\end{lemma}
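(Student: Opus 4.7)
The proof hinges on the preliminary identity
\[\Psi = \mathrm{Ad}_F(\Phi) - (F; d(F)) \circledcirc F^{-1},\]
which follows by applying the prismatic decomposition $\mathfrak{D}$ to the formula $\psi = f \cdot \varphi = \mathrm{Ad}_f(\varphi) - (f;d(f))\circledcirc f^{-1}$ from Proposition \ref{Adf properadic0}(1), since $\circledcirc$, $\rhd$, $\lhd$, and $d$ all extend $\hbar$-linearly to $\mathfrak{a}_H[\![\hbar]\!]$ and are intertwined by $\mathfrak{D}$.

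For point (1), I apply the derivation $\partial_\hbar$ to both sides of this identity. Two key inputs are needed. First, rerunning the argument of Proposition \ref{Adf properadic0}(3) with $\partial_\hbar$ in place of $d$, using only that $\partial_\hbar$ is a degree-zero derivation of $\circledcirc$ and that formulas (1)--(9) of Lemma \ref{cal} extend $\hbar$-linearly, yields
\[\partial_\hbar \mathrm{Ad}_F(\Phi) = \mathrm{Ad}_F(\partial_\hbar \Phi) + \bigl[(F;\partial_\hbar F) \circledcirc F^{-1},\, \mathrm{Ad}_F(\Phi)\bigr].\]
Second, setting $\theta(D) := (F; D(F)) \circledcirc F^{-1}$ for a derivation $D$ of $\mathfrak{a}_H[\![\hbar]\!]$, the identity $[d,\partial_\hbar]=0$ combined with the Leibniz rule and the formulas of Lemmas \ref{cal} and \ref{cal2} yields the curvature equation
\[\partial_\hbar\, \theta(d) - d\, \theta(\partial_\hbar) = \bigl[\theta(d),\, \theta(\partial_\hbar)\bigr],\]
which is the properadic analog of the flatness of the Maurer--Cartan form. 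Substituting both identities into $\partial_\hbar \Psi = \partial_\hbar \mathrm{Ad}_F(\Phi) - \partial_\hbar\, \theta(d)$ and rewriting $d^\Psi = d + [\Psi, -]$ via the preliminary identity, all bracket terms reassemble into a single coboundary:
\[\partial_\hbar \Psi - \mathrm{Ad}_F(\partial_\hbar \Phi) = -d^\Psi\bigl(\theta(\partial_\hbar)\bigr).\]
Taking homology classes in $H_{-1}(\mathfrak{g}_H[\![\hbar]\!]^\Psi)$ gives $K_\Psi = \mathrm{Ad}_F(K_\Phi)$.

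For point (2), every operation used ($\circledcirc$, the brackets, $d$, and $\partial_\hbar$) preserves the $\hbar$-adic filtration, so the same equality descends verbatim to the truncated twisted complex $(\mathfrak{g}_H[\![\hbar]\!]/(\hbar^n))^{\overline{\Psi}^n}$. Applying the isomorphism $\mathrm{Ad}_F^n$ from the lemma preceding the statement then yields $K^n_\Psi = \mathrm{Ad}_F^n(K^n_\Phi)$.

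The main obstacle is establishing the curvature identity for $\theta$: conceptually it is the higher-arity generalization of the classical flatness relation $d\theta + \tfrac12 [\theta,\theta] = 0$ for the Maurer--Cartan form of a Lie group, but its verification in the properadic setting is a sign- and bookkeeping-intensive application of formulas (7)--(8) of Lemma \ref{cal} together with the degree conventions for $d$ and $\partial_\hbar$.
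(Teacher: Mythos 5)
Your proposal is correct and follows essentially the same route as the paper's proof: differentiate the transformation formula $\Psi=\mathrm{Ad}_F(\Phi)-(F;d(F))\circledcirc F^{-1}$ with respect to $\hbar$, invoke the $\partial_\hbar$-analogue of Point (3) of Proposition \ref{Adf properadic0}, reduce everything to the curvature identity for $(F;-)\circledcirc F^{-1}$ (which the paper likewise verifies via formula (8) of Lemma \ref{cal} and Lemma \ref{cal2}), and obtain Point (2) by passing to the $\hbar$-adic truncations. The one discrepancy is the sign in your curvature equation: with the paper's conventions it reads $\partial_\hbar\bigl((F;d(F))\circledcirc F^{-1}\bigr)-d\bigl((F;\partial_\hbar F)\circledcirc F^{-1}\bigr)=-\bigl[(F;d(F))\circledcirc F^{-1},\,(F;\partial_\hbar F)\circledcirc F^{-1}\bigr]$, and since the two arguments have degrees $-1$ and $0$ the bracket is genuinely antisymmetric here, so the sign is what makes the bracket terms reassemble into $d^\Psi\bigl((F;\partial_\hbar F)\circledcirc F^{-1}\bigr)$ as you claim.
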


\begin{proof}
Let us prove point $(1)$. More precisely, let us prove that the following equality holds \begin{equation*}
	\mathrm{Ad}_{F} (\partial_{\hbar} \Phi) - \partial_{\hbar} \Psi  = d^{\Psi} \left( \left(F ; \partial_{\hbar} F \right) \circledcirc F^{-1}  \right) \ .
\end{equation*} First of all, the differentiation $\partial_{\hbar}$ is a coderivation and satisfies the sames compatibility relation than the differential $d$. In particular, using the same argument than in the proof of Point (3) of Proposition \ref{Adf properadic0}, we have \[ \mathrm{Ad}_{F} (\partial_{\hbar} \Phi) - \partial_{\hbar} \left( \mathrm{Ad}_{F} (\Phi) \right) =    \left[\mathrm{Ad}_{F} (\Phi), \left(F ; \partial_{\hbar} F \right) \circledcirc F^{-1} \right] \ . \] It suffices to prove that \[ \partial_{\hbar} \left(\left(F ; d(F)  \right) \circledcirc F^{-1}\right)  =  d\left( \left(F ; \partial_{\hbar} F \right) \circledcirc F^{-1}\right) - \left[\left(F ; d(F)  \right) \circledcirc F^{-1}, \left(F ; \partial_{\hbar} F \right) \circledcirc F^{-1} \right] \ .  \] The difference $\partial_{\hbar} \left(\left(F ; d(F)  \right) \circledcirc F^{-1}\right) - d\left( \left(F ; \partial_{\hbar} F \right) \circledcirc F^{-1}\right)$ boils down to \[  (F ; d(F)) \circledcirc (F^{-1} ; \partial_{\hbar} \left( F^{-1} \right) ) - (F ; \partial_{\hbar}F) \circledcirc (F^{-1} ; d \left( F^{-1} \right) ) \ . \] Applying twice formula (8) of Lemma \ref{cal}, leads to \begin{equation*}                                  
		(F ; d(F)) \circledcirc (F^{-1} ; \partial_{\hbar} \left( F^{-1} \right) )  =  \left(\left(F ; d(F)  \right) \circledcirc F^{-1} \right) \star \left( F  \circledcirc (F^{-1} ; \partial_{\hbar} \left( F^{-1} \right) \right) \ .
         \end{equation*}  \begin{equation*}                             
(F ; \partial_{\hbar} (F)) \circledcirc (F^{-1} ; d \left( F^{-1} \right) ) =   \left(\left(F ; \partial_{\hbar}F \right) \circledcirc F^{-1} \right) \star \left( F \circledcirc \left(F^{-1} ; d \left( F^{-1} \right) \right) \right) \ . \end{equation*}  By Point (2) of Lemma \ref{cal2}, we have  $F \circledcirc \left(F^{-1} ; d \left( F^{-1} \right) \right) = -  \left(F ; d \left( F \right) \right) \circledcirc F^{-1}$. The formula \[ d \left(F^{1} \circledcirc F \right) = (F^{-1} ; d \left( F^{-1} \right)) \circledcirc F +  F^{-1} \circledcirc (F; d(F))  \] implies that $F  \circledcirc (F^{-1} ; \partial_{\hbar} \left( F^{-1} \right) = - \left(F ;  \partial_{\hbar} \left( F \right) \right) \circledcirc F^{-1} $.
This concludes the proof. 
\end{proof}

\begin{proof}[Proof of Proposition \ref{Kaledin prop}] 
	
	Let us prove Point (1). Let $\psi \in \mathrm{MC}(\mathfrak{g}_H)$ be such that $\psi^{(k)} = 0$~, for $2 \leqslant k \leqslant n+1$~, and let $\varphi \rightsquigarrow \psi$ be an $\infty$-isotopy. Point (2) of Lemma \ref{piquant prop} implies that \[K_{\Phi}^n = \left(\mathrm{Ad}_{F}^n \right)^{-1} \left(K_{\Psi}^n\right) = \left(\mathrm{Ad}_{F}^n \right)^{-1} \left(K_{\varphi^{(1)}}^n\right) = 0 \ . \] Let us prove the converse result by induction on $n \geqslant 1$. Suppose that \[ K^1_{\Phi} = 0 \in H_{-1}\left(\mathfrak{g}^{\varphi^{(1)}}_H\right) \ . \] There exists $\upsilon \in \mathfrak{g}_H$ of degree zero such that $ d^{\varphi^{(1)}} (\upsilon) =\varphi^{(2)} \ .$ Let us set $f \coloneqq 1 + \upsilon^{(1)}$ and  $\psi \coloneqq f \cdot \varphi$.  By \cite[Lemma~3.22]{Kaledin}, which holds over any coefficient ring, we have \[
	\psi^{(2)}  = \varphi^{(2)} - d^{\varphi^{(1)}} \left(\upsilon^{(1)}\right) = 0 \ .  \] Suppose that $n > 1$ and that the result holds for $n-1$~. If the class $K_{\Phi}^n$ is zero, so does the class $K_{\Phi}^{n-1}$. By the induction hypothesis, there exist $\psi \in \mathrm{MC}(\mathfrak{g}_H)$ such that $\psi^{(k)} = 0$ for $2 \leqslant k \leqslant n$ and an $\infty$-isotopy $g : \varphi \rightsquigarrow \psi$~. Point (2) of Lemma \ref{piquant prop} implies that \[\mathrm{Ad}_{\mathfrak{D}(g)}^n(K^{n}_{\Phi})= K^{n}_{\mathfrak{D}(\psi)} = [n \psi^{(n+1)} \hbar^{n-1}] = 0 \ .\] There exists $\upsilon \coloneqq \upsilon_0 + \upsilon_1 h + \dots + \upsilon_{n-1}\hbar^{n-1} \in \mathfrak{g}_H[\![\hbar]\!]$ of degree zero such that \[d^{\varphi^{(1)}}(\upsilon) \equiv n \psi^{(n+1)} \hbar^{n-1} \pmod{\hbar^{n}} \ .\] Looking at the coefficient of $\hbar^{n-1}$ and in weight $n +1$ on both sides, we have\[ d^{\varphi^{(1)}} \left( \upsilon_{n-1}^{(n)} \right) = n \psi^{(n+1)}  \ . \]  Let us consider $\lambda \coloneqq \frac{1}{n} \upsilon_{n-1}^{(n)}$ and $f \coloneqq 1 + \lambda$~. We set $\psi' \coloneqq f \cdot \psi$. By construction, $f \circledcirc g$ determines an $\infty$-isotopy $ \varphi \rightsquigarrow  \psi' \ .$ The result \cite[Lemma~3.22]{Kaledin} implies that \[\left(\psi'\right)^{(k)} = 0~, \quad \mbox{for} \; 2 \leqslant k \leqslant n~, \quad \mbox{and} \quad
	(\psi')^{(n+1)}  = \psi^{(n+1)} - d^{\varphi^{(1)}} \left( \lambda\right) = 0 \ . \]  The proof of Point (2) is similar to the one of \cite[Proposition~2.32 (2)]{Kaledin}. 
\end{proof}

\begin{theorem}\label{B prop}
	Let $R$ be a commutative ring. Let $\C$ be a reduced weight-graded dg cooperad over $R$. Let $(A, \varphi)$ be a $\Cobar \C$-algebra structure such that there exists a transferred structure $(H(A),\varphi_t)$.
	\begin{enumerate}
		\item Let $n \geqslant 1$ be an integer such that $n !$ is a unit in $R$. The algebra $(A, \varphi)$ is gauge $n $-formal if and only if the truncated class $K^n_{\varphi_t}$ is zero. 
		
		\item If $R$ is a $\mathbb{Q}$-algebra, the algebra $(A, \varphi)$ is gauge formal if and only if the class $K_{\varphi_t}$ is zero.
	\end{enumerate}\end{theorem}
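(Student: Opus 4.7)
The plan is to reduce Theorem \ref{B prop} to the characterization of $\infty$-isotopies provided by Proposition \ref{Kaledin prop}, through the dictionary between gauge homotopy equivalences and $\infty$-isotopies given by Proposition \ref{isotopy}.

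First, I would reduce from $(A,\varphi)$ to $(H(A),\varphi_t)$: since $(A,\varphi)$ admits a transferred structure, it is gauge homotopy equivalent to $(H(A),\varphi_t)$ via the $\infty$-quasi-isomorphisms $i_\infty^A$ and $p_\infty^A$. By transitivity of gauge homotopy equivalence under composition of $\infty$-quasi-isomorphisms, $(A,\varphi)$ is gauge $n$-formal (resp.\ gauge formal) if and only if $(H(A),\varphi_t)$ is. It therefore suffices to prove the equivalences for the transferred structure.

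Next, I would translate gauge $n$-formality of $(H(A),\varphi_t)$ into the existence of a suitable $\infty$-isotopy. Unfolding the definition, the property amounts to the existence of a gauge homotopy equivalence to some $(H(A),\phi)$ with $\phi - \varphi_* \in \mathcal{F}^{n+2}\mathfrak{g}_{H(A)}$. Using the identification $\varphi_t^{(1)} = \varphi_*$ induced by the twisting morphism $\C \to \P$, the condition on $\phi$ becomes $\phi^{(1)} = \varphi_t^{(1)}$ and $\phi^{(k)} = 0$ for $2 \leqslant k \leqslant n+1$. Applying the equivalence $(1) \Leftrightarrow (4)$ of Proposition \ref{isotopy}, such a gauge homotopy equivalence exists iff there are an isomorphism $\iota : H(A) \to H(A)$ and $f \in \Gamma_{H(A)}$ with $f \cdot \varphi_t = \iota \cdot \phi$. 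Since $\iota$ is a degree-zero isomorphism and the weight grading is carried entirely by $\C$, the element $\tilde\phi \coloneqq \iota \cdot \phi$ still satisfies $\tilde\phi^{(1)} = \varphi_t^{(1)}$ and $\tilde\phi^{(k)} = 0$ for $2 \leqslant k \leqslant n+1$. Hence gauge $n$-formality of $(H(A),\varphi_t)$ is equivalent to the existence of an $\infty$-isotopy $\varphi_t \rightsquigarrow \psi$ with $\psi^{(k)} = 0$ for $2 \leqslant k \leqslant n+1$.

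Finally, Point (1) of Proposition \ref{Kaledin prop}, applicable since $n!$ is a unit in $R$, translates the latter condition into the vanishing of $K^n_{\varphi_t}$, proving (1). For Point (2), the exact same argument shows that gauge formality of $(H(A),\varphi_t)$ is equivalent to the existence of an $\infty$-isotopy $\varphi_t \rightsquigarrow \varphi_t^{(1)}$, which by Point (2) of Proposition \ref{Kaledin prop} (valid over a $\mathbb{Q}$-algebra) is equivalent to $K_{\varphi_t} = 0$. The main subtlety will lie in the middle step, namely in verifying that the action of $\iota$ preserves the weight-wise vanishing conditions, so that the reduction from a general gauge homotopy equivalence to an $\infty$-isotopy with a structure of the prescribed form loses no information.
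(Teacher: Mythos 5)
Your argument is correct and follows exactly the route the paper takes: its proof of Theorem \ref{B prop} is the one-line citation of Proposition \ref{Kaledin prop}, Definition \ref{definition formality prop}, and the equivalence $(1)\Leftrightarrow(4)$ of Proposition \ref{isotopy}, which you have simply unpacked. The point you flag as the main subtlety --- that conjugating by the arity-one isomorphism $\iota$ preserves the weight grading and that $f^{(0)}=\mathrm{id}$ forces $(\iota\cdot\phi)^{(1)}=\varphi_t^{(1)}$ --- is indeed the detail the paper leaves implicit, and you resolve it correctly.
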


\begin{proof}
	This follows from Proposition \ref{Kaledin prop}, by using Definition \ref{definition formality prop} and Proposition \ref{isotopy}.
\end{proof}

\subsection{Obstruction sequences in the colored setting}\label{2.7}

The obstruction theory described so far admits a natural extension to the setting of colored operads. In this broader context, each input and output of an operation is assigned a label (a “color”), and compositions are only defined when these labels match appropriately. To incorporate additional symmetries, one can work with operads colored not merely by a set but by a groupoid $\mathbb{V}$. A systematic study of the Koszul duality theory for this groupoid-colored situation was carried out by Ward in \cite{War19}; see also \cite[Section 5]{RL2}.

\begin{example}[Endomorphism operad]
	If $A$ is a $\mathbb{V}$-module, i.e. a functor $A : \mathbb{V} \to \mathrm{dgMod}_R$, one can associate to it a $\mathbb{V}$-colored endomorphism operad. For objects $v_0, v_1, \dots, v_r$  in $\mathbb{V}$, the corresponding space of operations is
	$
	\End_A(v_0; v_1,\dots,v_r) := \Hom\bigl(A(v_0),\, A(v_1)\otimes \cdots \otimes A(v_r)\bigr),
	$
	where the symmetric group $\mathbb{S}_r$ acts by permuting the tensor factors.
\end{example}

\noindent Throughout this section, we fix a groupoid $\mathbb{V}$ together with a reduced conilpotent $\mathbb{V}$-colored dg cooperad $\C$ over $R$. 
The results of Sections \ref{2.35}, \ref{2.4}, and \ref{2.5} extend directly to this setting by working in the following colored version of the convolution dg Lie algebra.

\begin{definition} [Colored convolution dg pre-Lie algebra] The \emph{convolution dg pre-Lie algebra} associated to $\C$ and to a $\mathbb{V}$-module $A$ is the dg pre-Lie algebra \[\mathfrak{g}_{A} := \left( \Hom_{\mathbb{S}} \left(\overline{\C}, \mathrm{End}_A\right), \star, d \right) \ , \] where the underlying space is \[\Hom_{ \mathcal{S}_{\mathbb{V}}} \left(\overline{\C}, \End_A\right) : = \prod_{ \mathcal{S}_{\mathbb{V}}} \Hom_{\mathbb{S}} \left(\overline{\C}\left(v_0;v_1, \dots , v_r\right), \End_A\left(v_0; v_1, \dots , v_r\right)\right)\ ,\] where $\mathcal{S}_{\mathbb{V}} \coloneqq \lbrace \left(v_1, \dots , v_r; v_0\right) \in \mathrm{ob} \left(\mathbb{V}\right)^{r+1} \rbrace$. It is equipped with the pre-Lie product \[\varphi \star \psi \coloneqq \overline{\C} \xrightarrow{\Delta_{(1)}} \overline{\C} \circ_{(1)} \overline{\C} \xrightarrow{\varphi \circ_{(1)} \psi} \End_A \circ_{(1)} \End_A \xrightarrow{\gamma_{(1)}} \End_A \ ,\] and the differential \[d (\varphi) = d_{\End_A} \circ \varphi - (- 1)^{|\varphi|} \varphi \circ d_{\overline{\C}} \ .\] \end{definition}

\noindent Within this framework, one relies on a colored version of the homotopy transfer theorem, proved in \cite{War19} for a characteristic zero field. In the case of set colored non-symmetric operads, one can deal with any coefficient ring see e.g. \cite[Theorem~5]{DV15}.

\section{\textcolor{bordeau}{ Minimal model for highly connected varieties.}}\label{section3}

In \cite{Zho22}, Zhou proved that for  a $k$-connected compact manifold $M$ of dimension $d$ smaller than $(\ell+1)k+2$ for $\ell \geqslant 3$, there exists a strictly unital $\mathcal{A}_{\infty}$-algebra structure of the form \begin{equation}\label{mod}
	(H_{dr}^*(M), \psi_2 , \dots, \psi_{\ell - 1}) \ .
\end{equation} which is related to $\Omega^*_{dr}(M)$ by an $\infty$-quasi-isomorphism. The purpose of this section is, first, to revisit the proof through the lens of obstruction classes, and second, to highlight that this proof extends beyond the characteristic zero setting and applies to other types of cohomology. In all this section, we work over a commutative ground ring $R$.

\begin{remark}
	The case $\ell = 3$ (corresponding to gauge formality)  was previously established by Miller in \cite{Mil79}. The case $\ell =5$ was proved by Crowley--Nordstr{$\ddot{o}$}m in \cite{CN20}. 	A more direct proof of Zhou's result was established by \cite{FH23}. They also prove a variant of this result, conjectured by Zhou, stating that if $d$ is smaller than $(\ell+1)k+ 4$ and the $(k+1)$-betti number $b_{k+1} = 1$ then 	there exists a strictly unital $\mathcal{A}_{\infty}$-algebra structure of the form (\ref{mod}) which is related to $\Omega^*_{dr}(M)$ by an $\infty$-quasi-isomorphism.
\end{remark}

\noindent In this section, we focus on $\mathcal{A}_{\infty}$-algebra structures, in which case the reduced conilpotent dg coproperad $\C$ of Section \ref{section2} is simply the Koszul dual cooperad $As^{\antishriek}$. The convolution Lie admissible algebra $\mathfrak{g}_A$ is the Hochschild cochain complex \[\mathfrak{g}_A = \prod_{k \geqslant 2} s^{-k + 1} \Hom \left(A^{\otimes k}, A \right)  \] with the usual Lie bracket and whose filtration is given by the arity, i.e. \[\mathcal{F}^n\mathfrak{g}_A = \prod_{k \geqslant n +1} s^{-k + 1} \Hom \left(A^{\otimes k}, A \right) \ . \]

\begin{definition}
	An $\mathcal{A}_{\infty}$-algebra $(A, \varphi)$ is \emph{strictly unital} if there exists  $1_A \in A^0$ such that
	\begin{itemize}
		\item[$\centerdot$] $d_A(1_A) = 0,$ $\varphi_2 (1_A, z_1) = \varphi_2(z_1, 1_A) = z_1,$
		\item[$\centerdot$] $\varphi_p (z_1, \dots, z_p)=0$, for all $p \geqslant 3$ when some $z_j = 1_A $. 
	\end{itemize}    
\end{definition}

\noindent The following theorem is a variant of \cite[Theorem 3.2]{Zho22}. While this result was originally stated for a field of characteristic zero, its proof actually extends to more general settings. We now present an alternative proof using obstruction theory that works for all rings. 

\begin{theorem}\label{5 hyp}
	Let $\ell \geqslant 3$ be such that $\ell$ and $\ell +1$ are units in the ground ring $R$. 
 Let $(A, \varphi)$ be a unital dg assocative algebra with $1_A$ not exact. Suppose that $A$ is related to $H(A)$ via a contraction. Let $n, k  \geqslant 1$ be such that 
	\begin{enumerate}
		\item  $n \leqslant (\ell +1)k +2$; 
		\item $A^i = 0$ if $i <0$ or $i > n$ and $H^i(A)= 0$ if $1 \leqslant i \leqslant k$ or $n-k \leqslant i \leqslant n - 1 $;
		\item $H^i(A)$ is a finitely generated free $R$-module with basis $\{x_1^i, \dots, x_{d_i}^i\}$ and $d_0 =  d_n = 1$; 
		\item there exists a dual basis $\{y_1^{n-i}, \dots, y_{d_i}^{n-i}\}$ of $H^{n-i}(A)$ such that \[ \varphi_*(y^{n-i}_u, x^i_v) = \delta_{uv} \nu \]
		where $\varphi_*$ is the induced structure on $H(A)$ and $\nu = x_1^n$ denotes a generator of $H^n(A)$;
	\end{enumerate} There exists a strictly unital $\mathcal{A}_{\infty}$-algebra structure of the form \[\left(H(A), \psi_2 , \dots, \psi_{\ell - 1} \right) \ , \] which is related to $(A, \varphi)$ by an $\infty$-quasi-isomorphism. 
\end{theorem}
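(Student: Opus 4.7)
The plan is to combine the homotopy transfer theorem with the obstruction theory of Section~\ref{2.4}. By Theorem~\ref{HTT}, the given contraction transfers the dg associative structure $\varphi$ to an $\mathcal{A}_\infty$-algebra structure $(H(A),\varphi_t)$ with $\varphi_{t,2}=\varphi_*$, together with an $\infty$-quasi-isomorphism $i_\infty^A : (H(A),\varphi_t) \rightsquigarrow (A,\varphi)$; a standard choice of lift of $1_{H(A)}$ to $1_A$ makes $\varphi_t$ strictly unital. It therefore suffices to produce a strictly unital $\infty$-isotopy $\varphi_t \rightsquigarrow \psi$ with $\psi_p=0$ for all $p\geqslant \ell$, since composing with $i_\infty^A$ then yields the required $\infty$-quasi-isomorphism.

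I would construct $\psi$ and the components $f_{p-1}$ of an $\infty$-isotopy $f:\varphi_t \rightsquigarrow \psi$ simultaneously, by induction on the arity $p$, setting $\psi_2=\varphi_*$ and $f_1=\mathrm{id}$. Assume $\psi_q$ and $f_{q-1}$ are defined compatibly for all $q<p$, with $\psi_q=0$ whenever $\ell\leqslant q<p$. The $\infty$-isotopy equation $f\cdot \varphi_t = \psi$, read in arity $p$, takes the form
\[
\psi_p - d^{\varphi_*}(f_{p-1}) = c_p,
\]
where $c_p \in \Hom(H(A)^{\otimes p},H(A))$ is a Hochschild cocycle on $(H(A),\varphi_*)$ determined entirely by the lower-arity data. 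For $p<\ell$, one picks $f_{p-1}$ (for instance zero) and sets $\psi_p=c_p+d^{\varphi_*}(f_{p-1})$. For $p\geqslant \ell$, the requirement $\psi_p=0$ reduces to solving $d^{\varphi_*}(f_{p-1})=-c_p$, and the obstruction is precisely the class $[c_p]$ in the Hochschild cohomology of $(H(A),\varphi_*)$. This is the obstruction class of Proposition~\ref{obstru bis} in the direction $(1)\Rightarrow(2)$, whose proof constructs the $\infty$-isotopy out of a bare primitive $\lambda$ via $f=1+\lambda$ and thus requires no factorial invertibility; the hypothesis that $\ell$ and $\ell+1$ are units in $R$ enters instead through the explicit Poincaré-duality formula below.

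The crux is to prove $[c_p]=0$ for $p\geqslant \ell$. A strictly unital matrix coefficient $H^{i_1}\otimes\cdots\otimes H^{i_p}\to H^{j}$ of cohomological degree $2-p$ satisfies $i_s\geqslant k+1$ and $j=i_1+\cdots+i_p+2-p\geqslant pk+2$. Combined with the gap in $H^*$ from hypothesis~(2) and the dimension bound (1), this forces $j=n$ and $p\in\{\ell,\ell+1\}$ (with the case $p=\ell+1$ occurring only when $n=(\ell+1)k+2$), so $c_p$ takes values in the line $R\cdot\nu$. The non-degenerate Poincaré pairing of hypothesis~(4) then provides an explicit primitive: contracting one entry of the top-valued cocycle $c_p$ against the dual basis $\{y_u^{n-i}\}$ yields a strictly unital cochain $f_{p-1}$, and a direct calculation using the Leibniz rule and the non-degeneracy of the pairing shows that, after appropriate symmetrization, its Hochschild coboundary equals $c_p$. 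Iterating this inductive step and noting that operations of cohomological degree $2-p<-n$ vanish automatically so the induction terminates in finitely many steps, one obtains the desired $\mathcal{A}_\infty$-structure $(H(A),\psi_2,\ldots,\psi_{\ell-1})$ together with an $\infty$-isotopy from $\varphi_t$.

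The main obstacle is the explicit Poincaré-duality computation underpinning this vanishing: one must verify that the cochain constructed through the pairing is genuinely a primitive of $c_p$, that it can be chosen strictly unital, and that the symmetrization required to match all positions of the Hochschild differential introduces combinatorial factors which are multiples of $\ell$ (for $p=\ell$) or of $\ell+1$ (for $p=\ell+1$), which is precisely where the arithmetic hypotheses on $\ell$ and $\ell+1$ enter.
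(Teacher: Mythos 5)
Your skeleton matches the paper's: transfer via Theorem \ref{HTT} to a strictly unital model $(H(A),\varphi^1)$ on cohomology, observe from the degree gaps that the only possibly nonzero components beyond arity $\ell-1$ sit in arities $\ell$ and $\ell+1$ and take values in $H^n(A)=R\cdot\nu$ (with arity $\ell+1$ occurring only when $n=(\ell+1)k+2$), and kill these two obstructions by exhibiting explicit primitives built from the Poincar\'e pairing. The gap is in the one step you defer to ``a direct calculation using the Leibniz rule and the non-degeneracy of the pairing'': non-degeneracy alone does not make a top-degree-valued Hochschild cocycle exact, and the cochain obtained by contracting one slot of $c_p$ against the dual basis has coboundary equal to a \emph{cyclic rotation} of $c_p$, not to $c_p$ itself. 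What the paper actually uses is that the transferred structure can be chosen (by \cite[Theorem~2.14]{Zho22}) so that its components satisfy the cyclic identity of \cite[Lemma~4.2]{Zho19}, namely $\sum_{j}(-1)^{\alpha_j}\varphi^1_p(z_j,\dots,z_p,z_1,\dots,z_{j-1})=0$ on tuples of total degree $n+p-2$. The primitive is then the weighted average of all cyclic contractions with weights $\tfrac{\ell-j}{\ell}$, and its coboundary equals $\varphi^1_\ell$ precisely because the full cyclic sum $\tfrac{1}{\ell}\sum_j(-1)^{\alpha_j}\tilde c(j,\cdot)$ vanishes. This cyclic input appears nowhere in your proposal, and without it the claimed vanishing $[c_p]=0$ is unproved; it is also where the invertibility of $\ell$ and $\ell+1$ genuinely enters (as denominators of the averaging weights), rather than through ``combinatorial factors which are multiples of $\ell$''.

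A second, smaller omission: at the arity-$(\ell+1)$ stage one needs the cyclic identity for the \emph{modified} structure $\phi^2=(1-\lambda)\cdot\varphi^1$, which is not automatic. The paper justifies it by recognising $\phi^2$, together with the quasi-isomorphism $(1-\lambda)\circledcirc p_{\infty}$, as a model produced by Zhou's inductive construction, so that \cite[Lemma~4.2]{Zho19} applies again; your induction on arity glosses over this persistence of cyclicity. The remaining points of your outline (strict unitality, reduction to an $\infty$-isotopy on $H(A)$, termination of the induction for degree reasons) are consistent with the paper.
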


\begin{remark}
	In characteristic zero, the proof of the theorem is an application of the obstruction theory set out above. However, it generalizes to any coefficient ring. In that case, it is not a direct application of obstruction sequences, as these do not work for arbitrary coefficient rings (except in the special case of formality). Nevertheless, the intuition provided by obstruction theory is used to establish the result.
\end{remark}

\begin{proof}
By the homotopy transfer theorem \ref{HTT}, the algebra $(A, \varphi)$ admits a transferred structure $(H(A), \varphi^1)$ which can be chosen strictly unital by \cite[Theorem~2.14]{Zho22} and such the following cyclic condition holds for all $p \geqslant 2$ and $(z_1, \dots, z_p)$ such that $|z_1| + \cdots + |z_{p}| = n + p - 2$,  \[ \sum_{j = 1}^{p} (-1)^{\alpha_j} \varphi^1_p (z_j, \dots , z_p , z_1 \dots, z_{j-1})  = 0 \ , \] where $\alpha_{j +1} \coloneqq j (\ell + 1 ) + (|z_1| + \cdots + |z_j|)(n - \ell + 1)$ by \cite[Lemma~4.2]{Zho19}. Let us first note that the assumptions imply that $\varphi^1_p = 0$, for all $p \geqslant \ell + 2$. Indeed, if a component $\varphi^1_p(z_1, \dots, z_p)$ is non-zero, it lies in degree at least $pk + 2$.  If $ p$ is greater than $\ell + 2 $, then $p k +2 > n$ and this implies that $\varphi^1_p = 0$. Let us set \[\psi \coloneqq  \varphi^1_2 + \cdots + \varphi^1_{\ell-1} \ ,\] which is a Maurer--Cartan element since the Maurer--Cartan equation of $\varphi^1$ is equivalent to $\psi \star \psi = 0$ by degree reasons. To conclude, we prove that $(H(A), \varphi^1)$ and $(H(A), \psi)$ are gauge homotopy equivalent. The first non-trivial obstruction is \[\vartheta_{\ell-1} = \left[\varphi^1_{\ell}\right] \in H_{-1} \left( \mathfrak{h}/ \mathcal{F}^{\ell}\mathfrak{h} \right)\] where $\mathfrak{h} \coloneqq \mathfrak{g}_{H(A)}^{\psi}$. Let us prove that $\vartheta_{\ell}$ vanishes, i.e. that there exists $\lambda \in  \mathfrak{h}_0$ such that \[ d^{\psi} \left(\lambda\right) \equiv \varphi^1_{\ell} \pmod{\mathcal{F}^{\ell }\mathfrak{h}} \ . \] First, we claim that \begin{equation}\label{z} \varphi^1_{\ell} (z_1, \dots, z_{\ell}) \neq 0 \quad  \implies \quad |z_1| + \cdots + |z_{\ell}| = n + \ell - 2   \ . \end{equation}
Indeed, if $\varphi^1_{\ell}(z_1, \dots, z_{\ell} )$ is non-zero, it is at least of degree $\ell k + 2$.  Since $\ell k +2 \geqslant n-k $, the element $\varphi^1_{\ell} \left(z_1, \dots, z_{\ell} \right)$ is non-zero only if it lies in $H^n(A)$, which proves Claim (\ref{z}).  For all $(z_1, \dots, z_{\ell} )$ such that $|z_1| + \cdots + |z_{\ell}| = n + \ell - 2$,  there exists $c(z_1, \dots, z_{\ell}) \in R$ such that \[\varphi^1_{\ell} \left(z_1, \dots, z_{\ell} \right) = c(z_1, \dots, z_{\ell}) \nu \ . \] Let us define an element $\lambda \in \mathfrak{h}_0$ of arity $\ell - 1$ as follows. For all $i_1 + \cdots + i_{\ell -1} < n + \ell -2$ such that $i_j > 0$ for all $j$,  let us set $s \coloneqq (n+ \ell -2) - (i_1 + \cdots + i_{\ell - 1})$ and \[\lambda \left(x^{i_1}_{r_1}, \dots,x^{i_{\ell -1}}_{r_{\ell -1}} \right) \coloneqq \sum_{t = 1}^{d_s} \sum_{j = 1}^{\ell - 1} (-1)^{\kappa(j)}  \frac{\ell - j}{\ell}\tilde{c}(j,t) y_t^{n-s} \ ,  \] where $\kappa(j) \coloneqq j(\ell + 1) + s(n -1) + (i_1 + \cdots + i_{j - 1})(n - \ell + 1) $ and \[\tilde{c}(j,t) \coloneqq c \left(x^{i_j}_{r_j },\dots,x^{i_{\ell -1}}_{r_{\ell -1}}, x^s_t, x^{i_1}_{r_1}, \dots,x^{i_{j -1}}_{r_{j -1}} \right) \ .\] In all other cases, we simply set $\lambda \coloneqq 0$. We have  \[ d^{\psi} \left(\lambda\right) \equiv \left[\varphi_*, \lambda\right] \pmod{\mathcal{F}^{\ell }\mathfrak{h}} \ , \] where $\varphi_* = \varphi_2^1$. Let us prove that $\left[\varphi_*, \lambda\right] =  \varphi^1_{\ell}  \ , $ where \[\left[\varphi_*, \lambda\right] = \varphi_* \star \lambda - \lambda\star \varphi_*  \ . \] By construction, this element is of arity $\ell$ and of degree $2 - \ell$ and it satisfies a property similar to $(\ref{z})$. For all $(z_1, \dots, z_{\ell} )$ such that $|z_1| + \cdots + |z_{\ell}| = n + \ell - 2$, we have \[\left(\lambda \star \varphi_*\right)(z_1, \dots, z_{\ell}) =0 \] by degree reasons.  It suffices to prove that for all $\left(x^{i_1}_{r_1}, \dots,x^{i_{\ell }}_{r_{\ell}} \right)  $, we have  \[ \left(\varphi_* \star \lambda \right)\left(x^{i_1}_{r_1}, \dots,x^{i_{\ell }}_{r_{\ell}} \right) = c\left(x^{i_1}_{r_1}, \dots,x^{i_{\ell }}_{r_{\ell}} \right) \nu \ . \]  The star product decomposes as $\varphi_* \star \lambda = \varphi_* \circ_2 \lambda + (-1)^{\ell - 2}  \varphi_* \circ_1 \lambda $.  On the one hand  \begin{equation*}
 	\begin{split}
 	 \left(\varphi_* \circ_1 \lambda \right) \left(x^{i_1}_{r_1}, \dots,x^{i_{\ell }}_{r_{\ell}} \right) & = \varphi_*\left(   \lambda \left(x^{i_1}_{r_1}, \dots,x^{i_{\ell -1}}_{r_{\ell -1}}\right), x^{i_{\ell}}_{r_{\ell}}   \right)  \\ & = \sum_{t = 1}^{d_s} \sum_{j = 1}^{\ell - 1} (-1)^{\kappa(j)}  \frac{\ell - j}{\ell}c(j,t)  \varphi_* \left( y_t^{n-s} ,x^{i_{\ell}}_{r_{\ell}}  \right)  \\ & =   \sum_{j = 1}^{\ell - 1} (-1)^{\kappa(j) + i_{\ell}\left(n -i_{\ell}\right) } \frac{\ell - j}{\ell}c(j,r_{\ell}) \nu  \end{split}
 \end{equation*}  where $\kappa(j) = j(\ell + 1) + i_{\ell}(n -1) + (i_1 + \cdots + i_{j - 1})(n - \ell + 1) \ . $ On the other hand, we have  \begin{equation*}
 	\begin{split}
 	\left(\varphi_* \circ_2 \lambda \right) \left(x^{i_1}_{r_1}, \dots,x^{i_{\ell }}_{r_{\ell}} \right) & =  (-1)^{i_1(2 - \ell)} \varphi_*\left(x^{i_1}_{r_1},   \lambda \left(x^{i_2}_{r_2}, \dots,x^{i_{\ell }}_{r_{\ell}}  \right) \right) \\ & = \sum_{j = 1}^{\ell - 1} (-1)^{\kappa'(j) + i_1 ( 2 - \ell) } \frac{\ell - j}{\ell} c'(j,r_{1}) \nu   
 	 \end{split}
 \end{equation*}  where $\kappa'(j) = j(\ell + 1) + i_{1}(n -1) + (i_2 + \cdots + i_{j })(n - \ell + 1) $ and \[c'(j, r_1) = c\left(x^{i_{j+1}}_{r_{j+1} },\dots , x^{i_{\ell}}_{r_{\ell}}, x^{i_1}_{r_1}, \dots,x^{i_{j}}_{r_{j }} \right) \ .\] Observe that when $ 1 \leqslant j \leqslant \ell -2 $, we have $c'(j, r_1) = \tilde{c}(j +1 , r_{\ell})$. We can also denote $\tilde{c}(\ell, r_{\ell}) \coloneqq c' (\ell - 1 , r_1 ) $. Let us set \[ \alpha_{j +1} \coloneqq j (\ell + 1 ) + (i_1 + \cdots + i_{j  })(n - \ell + 1) \ .  \] We have \[ (-1)^{\alpha_{j+1} } =  (-1)^{\kappa'(j  ) + i_1 ( 2 - \ell) } = -  (-1)^{\kappa(j+1) + \ell + i_{\ell}\left(n - i_{\ell}\right) } \ .  \] This leads to \[\left(\varphi_* \star \lambda \right)\left(x^{i_1}_{r_1}, \dots,x^{i_{\ell }}_{r_{\ell}} \right) =  \sum_{j = 1}^{\ell } (-1)^{\alpha_j } \frac{1}{\ell} \tilde{c}(j,r_{\ell}) \nu -  \tilde{c}(1,r_{\ell}) \nu \ ,   \] since $\alpha_1 = 0$.  Finally, we have \[\sum_{j = 1}^{\ell } (-1)^{\alpha_j } \frac{1}{\ell} \tilde{c}(j,r_{\ell}) \nu = \frac{1}{\ell} \sum_{j = 1}^{\ell } (-1)^{\alpha_j } \varphi_{\ell}^1 \left(x^{i_j}_{r_j },\dots,x^{i_{\ell -1}}_{r_{\ell -1}}, x^{i_{\ell}}_{r_{\ell}}, x^{i_1}_{r_1}, \dots,x^{i_{j -1}}_{r_{j -1}} \right) = 0 \ ,   \]   $\tilde{c}(1,r_{\ell}) \nu = c\left(x^{i_1}_{r_1}, \dots,x^{i_{\ell }}_{r_{\ell}} \right) \nu $ and the desired gauge is given by $- \lambda$. Let us set $f \coloneqq 1 - \lambda$ and  $\phi^2 \coloneqq f \cdot \varphi^1$. Since $\lambda$ is concentrated in arity $\ell -1$, we have \[\phi^2 \equiv \psi + \varphi^1_{\ell} - d^{\psi} (-\lambda) \equiv \psi \pmod{\mathcal{F}^{\ell} \mathfrak{h}} \ .  \]  We note that $\phi^2$ is still strictly unital by construction. The next obstruction is \[\vartheta_{\ell } = \left[ \phi_{\ell + 1}^2\right] \in H_{-1} \left( \mathfrak{h}/ \mathcal{F}^{\ell +1}\mathfrak{h} \right) \ . \] Let us prove that this second obstruction vanishes.  First, we claim  that $  \phi_{\ell + 1}^2 (z_1, \dots, z_{\ell +1})$ is none zero only if $ |z_i| = k + 1$ for all $i$. Thus, it is at least of degree $(\ell + 1)k + 2$. By the condition on $n$, this implies $n = (\ell + 1)k + 2$ and $ |z_i| = k + 1$ for all $i$. For all $(z_1, \cdots, z_{\ell})$ such that  $ |z_i| = k + 1$ for all $i$, there exists $b(z_1, \dots, z_{\ell +1 }) \in R$ such that \[\phi_{\ell + 1}^2 \left(z_1, \dots, z_{\ell +1} \right) = b(z_1, \dots, z_{\ell + 1}) \nu \ . \] Let us define an element $\beta \in \mathfrak{h}_0$ of arity $\ell$ as follows. Let us set $\beta \left(z_1, \dots, z_{\ell} \right) \coloneqq 0$ if there exists $i$ such that $|z_i| \neq k +1$ and \[\beta \left(x^{k+1}_{r_1}, \dots,x^{k+1}_{r_{\ell }} \right) \coloneqq \sum_{t = 1}^{d_{k+1}} \sum_{j = 1}^{\ell } (-1)^{\tilde{\kappa}(j)} \frac{\ell +1 - j}{\ell +1} \tilde{b}(j,t) y_t^{n-k-1} \ ,  \] where $\tilde{\kappa}(j) = jkl+(k+1)(l+1)$ and \[\tilde{b}(j, t) \coloneqq b \left(x^{k+1}_{r_{j}},\dots , x^{k+1}_{r_{\ell}}, x^{k+1}_{t} x^{k+1}_{r_1}, \dots,x^{k+1}_{r_{j -1 }} \right) \ .\] Let us prove that \[ d^{\psi} \left(\beta \right) \equiv  \phi_{\ell + 1}^2 \pmod{\mathcal{F}^{\ell + 2}\mathfrak{h}} \ . \] As before, it suffices to prove that for all $\left(x^{k+1}_{r_1}, \dots,x^{k+1}_{r_{\ell +1}} \right)  $, we have  \[ \left(\varphi_* \star \beta \right)\left(x^{k+1}_{r_1}, \dots,x^{k+1}_{r_{\ell + 1}} \right) = b \left(x^{k+1}_{r_1}, \dots,x^{k+1}_{r_{\ell +1}} \right) \nu \ . \]  The star product decomposes as $\varphi_* \star \beta = \varphi_* \circ_2 \beta + (-1)^{\ell - 1}  \varphi_* \circ_1 \beta $. On the one hand, we have  \begin{equation*}
 \begin{split}
 	\left(\varphi_* \circ_1 \beta \right) \left(x^{i_1}_{r_1}, \dots,x^{i_{\ell +1 }}_{r_{\ell +1}} \right) & =   \sum_{j = 1}^{\ell } (-1)^{\tilde{\kappa}(j) +  (k +1) (n - k - 1) } \frac{\ell + 1 - j}{\ell +1}\tilde{b}(j,r_{\ell +1}) \nu \ . \end{split}
 	\end{equation*} On the other hand, we have  \begin{equation*}
	\begin{split}
		\left(\varphi_* \circ_2 \beta \right) \left(x^{k+1}_{r_1}, \dots,x^{k+1}_{r_{\ell +1}} \right) & =   \sum_{j = 1}^{\ell } (-1)^{ \tilde{\kappa}(j) + (1 - \ell)(k+1) } \frac{\ell +1 - j}{\ell - 1} \tilde{b}(j + 1,r_{\ell +1}) \nu   \ . 
	\end{split}
\end{equation*} Let us set \[\tilde{\alpha}_{j+1} = (j-1)(\ell +2) + (j -1)(k+1)(n- \ell) \ . \] Since $n = (\ell+  1) k +2 $, we have \[(-1)^{\tilde{\alpha}_{j+1}} = (-1)^{ \tilde{\kappa}(j) + (1 - \ell)(k+1) }  = -  (-1)^{\ell-1 + \tilde{\kappa}(j +1) +  (k +1) (n - k - 1) }  \ .  \] As before, since $\alpha_1 = 0$, this leads to \[\left(\varphi_* \star \beta \right)\left(x^{k+1}_{r_1}, \dots,x^{k+1}_{r_{\ell +1}} \right) =  \sum_{j = 1}^{\ell+1 } (-1)^{\tilde{\alpha}_j } \frac{1}{\ell +1} \tilde{b}(j,r_{\ell +1}) \nu -  \tilde{b}(1,r_{\ell +1}) \nu \ .   \]   Finally, we claim that \cite[Lemma~4.2]{Zho19} applies once again to $(H(A), \phi^2)$ so that \[\sum_{j = 1}^{\ell+1 } (-1)^{\tilde{\alpha}_j }  \tilde{b}(j,r_{\ell +1}) \nu =  \sum_{j = 1}^{\ell+1 } (-1)^{\tilde{\alpha}_j }  \phi_{\ell + 1}^2\left(x^{k+1}_{r_{j}},\dots , x^{k+1}_{r_{\ell}}, x^{k+1}_{r_{\ell +1}} x^{k+1}_{r_1}, \dots,x^{k+1}_{r_{j -1 }} \right) = 0 \ . \] One can actually apply \cite[Lemma~4.2]{Zho19} to the model $M(A) = (H(A), \phi^2)$ together with the quasi-isomorphism $\tilde{f} \coloneqq (1- \lambda) \circledcirc p_{\infty}$ since the construction \[\phi^2 \coloneqq f \cdot \varphi^1 \quad \mbox{and} \quad \tilde{f} = (1- \lambda) \circledcirc p_{\infty}  \]  makes it appears precisely as a structure obtained through the induction process of \cite[Theorem~2.14]{Zho22}. Let us set $g \coloneqq 1 - \beta$ and  $\phi^3 \coloneqq g \cdot \phi^2$.  We have \[\phi^3 \equiv \psi + \phi^2_{\ell+1} - d^{\psi} (-\beta) \equiv \psi \pmod{\mathcal{F}^{\ell+1} \mathfrak{h}} \ .  \]  Furthermore, a component $\phi^3_p(z_1, \dots, z_p)$ is possibly none zero for $ p \geqslant \ell + 2 $ then it is in degree at least $p k +2$. Since $p k +2 > n$, this implies that  \[(g \circledcirc f) \cdot \varphi^1 = \psi \ . \]  This concludes the proof.   
\end{proof}

\begin{theorem}\label{poincaré}
Let $\ell \geqslant 3$ and let $\mathbb{K}$ be a field such that $\ell$ and $\ell +1 $ are units in $\mathbb{K}$. Let $M$ be a compact $k$-connected $C^{\infty}$-manifold whose dimension $n$ is smaller than $(\ell+1)k+2$. Its singular cochains $C^{*}_{\mathrm{sing}}(M, \mathbb{K})$ has an $A_{\infty}$-minimal model whose arity $p$ component vanishes for all $p \geqslant \ell$. 
\end{theorem}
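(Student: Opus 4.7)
The plan is to apply Theorem \ref{5 hyp} to a suitable strictly associative model of $C^{*}_{\mathrm{sing}}(M, \mathbb{K})$. Since Theorem \ref{5 hyp} requires the underlying chain complex to vanish outside degrees $[0,n]$ and singular cochains are unbounded, I would first choose a finite smooth triangulation of the compact smooth manifold $M$ and take the associated simplicial cochain algebra $A$ with the Alexander--Whitney cup product. This is a strictly unital dg associative algebra, degreewise finite-dimensional over $\mathbb{K}$, concentrated in cohomological degrees $0$ through $n$, and connected to $C^{*}_{\mathrm{sing}}(M, \mathbb{K})$ by a strict quasi-isomorphism of DGAs. Since $\mathbb{K}$ is a field, the underlying chain complex of $A$ admits a contraction onto $H(A) = H^*(M, \mathbb{K})$, and the unit $1_A$ represents the nonzero class generating $H^0$, hence is not exact.

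Next I would verify the four hypotheses of Theorem \ref{5 hyp} for this $A$. Hypothesis (1) is the standing assumption $n \leqslant (\ell+1)k+2$. Hypothesis (2) holds because $A$ is concentrated in $[0,n]$ by construction; the vanishing $H^i(A) = 0$ for $1 \leqslant i \leqslant k$ is the $k$-connectedness of $M$, and the vanishing for $n-k \leqslant i \leqslant n-1$ follows by Poincaré duality from the low-degree vanishing, the assumption $k \geqslant 1$ making $M$ simply connected and hence orientable. Hypothesis (3) is immediate: the cohomology is finite-dimensional over $\mathbb{K}$, with $d_0 = d_n = 1$ by connectedness and orientability. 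Hypothesis (4) is precisely Poincaré duality: the cup product pairing $H^i(M, \mathbb{K}) \otimes H^{n-i}(M, \mathbb{K}) \to H^n(M, \mathbb{K})$ is perfect, so dual bases of the required form exist.

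Applying Theorem \ref{5 hyp} then yields a strictly unital $A_\infty$-algebra of the form $(H^*(M, \mathbb{K}), \psi_2, \dots, \psi_{\ell - 1})$, whose operations of arity $p \geqslant \ell$ vanish, together with an $\infty$-quasi-isomorphism to $(A, \varphi)$. Composing this $\infty$-quasi-isomorphism with the strict DGA quasi-isomorphism between $A$ and $C^{*}_{\mathrm{sing}}(M, \mathbb{K})$ produces the required $A_\infty$-minimal model of the singular cochain algebra.

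The main obstacle is the preliminary step of producing a strictly associative \emph{bounded} model quasi-isomorphic to $C^{*}_{\mathrm{sing}}(M, \mathbb{K})$ as a DGA, because the singular cochain algebra itself is unbounded above and Theorem \ref{5 hyp} cannot be applied to it directly. Passing to simplicial cochains on a finite triangulation resolves this, but one must be careful that the chosen model is genuinely strictly associative unital (not merely $A_\infty$ or $E_\infty$), so that it qualifies as input to Theorem \ref{5 hyp}. Once the replacement is made, the remainder of the argument is a direct verification of Poincaré duality and connectedness consequences.
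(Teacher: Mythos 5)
Your argument follows the same route as the paper's in the $\mathbb{K}$-orientable case: verify hypotheses (1)--(4) of Theorem \ref{5 hyp}, with (2) and (4) supplied by connectivity and Poincar\'e duality. Your preliminary reduction from the unbounded singular cochain algebra to a finite, strictly associative and unital model (simplicial cochains on a finite triangulation) is a worthwhile addition: hypothesis (2) of Theorem \ref{5 hyp} does require $A^i=0$ for $i>n$, and the paper's proof passes over this replacement in silence.

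The one genuine divergence is the non-$\mathbb{K}$-orientable case, which you dismiss on the grounds that $k\geqslant 1$ forces $M$ to be simply connected, hence orientable. This is valid under the standard homotopical reading of ``$k$-connected'' with $k\geqslant 1$, but note that the statement of Theorem \ref{poincaré} does not impose $k\geqslant 1$, and the hypothesis that actually feeds into Theorem \ref{5 hyp} is only the vanishing of $H^i(M;\mathbb{K})$ for $1\leqslant i\leqslant k$, which non-orientable manifolds can satisfy (e.g.\ $\mathbb{R}\mathrm{P}^2\times S^3$ over $\mathbb{F}_3$ is cohomologically $2$-connected and non-orientable). In that situation Theorem \ref{5 hyp} cannot be invoked at all: when $2$ is invertible in $\mathbb{K}$ one has $H^n(M;\mathbb{K})=0$, so hypothesis (3) (namely $d_n=1$) and the duality hypothesis (4) both fail. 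The paper covers this case by a separate and more elementary argument: twisted Poincar\'e duality yields $H^i(M;\mathbb{K})=0$ for all $i\geqslant n-k$, and then any transferred operation $\varphi^1_p$ with $p\geqslant\ell$ lands in degree at least $pk+2\geqslant \ell k+2\geqslant n-k$ and vanishes for degree reasons alone, with no appeal to Theorem \ref{5 hyp}. You should either add this case or state explicitly that you are assuming $M$ is $\mathbb{K}$-orientable and justify why that assumption is harmless in your reading of the hypotheses.
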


\begin{proof}
	If $M$ is $\mathbb{K}$-oriented, the result follows from Theorem \ref{5 hyp}. The hypotheses $(4)$ is satisfied by Poincaré duality. When $M$ is not $\mathbb{K}$-orientable, then $H^n(M; \mathbb{K}) = 0$. Let $\tilde{M}$ be the orientation bundle over $M$. It is also $k$-connected and by twisted Poincaré duality \[H^{n- i} (M; \mathbb{K}) \cong H_i(\tilde{M}; \mathbb{K}) = 0 \ .  \] In particular, we have  $H^i(\tilde{M}; \mathbb{K})= 0$ if $1 \leqslant i \leqslant k$ or $ i \geqslant n-k $. By the homotopy transfer theorem \ref{HTT}, the algebra $(A, \varphi)$ admits a transferred structure $(H(A), \varphi^1)$, which can be chosen strictly unital by \cite[Theorem~2.14]{Zho22}. Furthermore, the assumptions imply that $\varphi^1_p = 0$, for all $p \geqslant \ell$. Indeed, if a component $\varphi^1_p(z_1, \dots, z_p)$ is non-zero, it lies in degree at least $pk + 2$.  If $ p$ is greater than $\ell + 2 $, then $p k +2 \geqslant n - k$ and this implies that $\varphi^1_p = 0$.
\end{proof}

\begin{theorem} \label{poincaré2}
Let $q$ be a prime number. Let $K$ be a separately closed field in which $q$ is invertible. Let $\ell \geqslant 3$ be such that $\ell$ and $\ell +1 $ are units in $\mathbb{F}_q$. Let $X$ be a $k$-connected irreducible, proper, smooth variety defined over $K$ whose dimension is smaller than $(\ell+1)k+2$.  Its étale cochains $C_{\mathrm{\acute{e}t}}^*(X, \mathbb{F}_{q})$ has an $A_{\infty}$-minimal model whose arity $p$ component vanishes for all $p \geqslant \ell$. 
\end{theorem}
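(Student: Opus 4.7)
My proof proposal parallels the argument for Theorem \ref{poincaré} verbatim, substituting étale cochains for singular cochains and étale Poincaré duality for the singular Poincaré duality. The plan is to apply Theorem \ref{5 hyp} to a dg $\mathbb{F}_q$-algebra model of $C^*_{\mathrm{\acute{e}t}}(X, \mathbb{F}_q)$, for instance the one produced by the Godement resolution of the constant sheaf $\mathbb{F}_q$ on the small étale site $X_{\mathrm{\acute{e}t}}$. Since $K$ is separably closed, $q$ is invertible in $K$, and $X$ is smooth and proper of algebraic dimension $d$, étale Poincaré duality (\cite{SGA4}, Exposé XVIII) provides a trace isomorphism $H^{2d}_{\mathrm{\acute{e}t}}(X, \mathbb{F}_q) \cong \mathbb{F}_q$ and perfect cup product pairings $H^i_{\mathrm{\acute{e}t}}(X, \mathbb{F}_q) \otimes_{\mathbb{F}_q} H^{2d-i}_{\mathrm{\acute{e}t}}(X, \mathbb{F}_q) \to H^{2d}_{\mathrm{\acute{e}t}}(X, \mathbb{F}_q)$. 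The ``dimension'' appearing in the statement is to be interpreted as the top étale cohomological degree $n = 2d$, matching the role of $n$ in Theorem \ref{5 hyp}.

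First, I would replace the Godement dg algebra by a bounded quasi-isomorphic subcomplex concentrated in degrees $[0,n]$, which is possible by Artin vanishing. Over the field $\mathbb{F}_q$ any such bounded complex admits a contraction onto its cohomology, so the input contraction for Theorem \ref{5 hyp} is automatic. Next, I would verify the remaining hypotheses in order: the vanishing $H^i_{\mathrm{\acute{e}t}}(X, \mathbb{F}_q) = 0$ for $1 \leqslant i \leqslant k$ is the cohomological content of $k$-connectedness, and by Poincaré duality it transports to the range $n-k \leqslant i \leqslant n-1$; finite-dimensionality of each $H^i$ over $\mathbb{F}_q$ is the finiteness theorem of étale cohomology for proper varieties, and freeness is automatic over a field; irreducibility of $X$ gives $H^0 = \mathbb{F}_q$, the trace isomorphism gives $H^n = \mathbb{F}_q$, and the perfect Poincaré duality pairing furnishes the dual basis required by hypothesis (4) of Theorem \ref{5 hyp}. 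Applying Theorem \ref{5 hyp} then produces the desired strictly unital $A_\infty$-minimal model $(H^*_{\mathrm{\acute{e}t}}(X, \mathbb{F}_q), \psi_2, \ldots, \psi_{\ell-1})$ with vanishing operations in arities $\geqslant \ell$.

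The main obstacle is clarifying what ``$k$-connected'' means for an algebraic variety over an arbitrary separably closed field $K$. In the classical setting over $\mathbb{C}$, one may invoke the comparison theorem between étale and singular cohomology; in general, I would take it to mean vanishing of étale cohomology with $\mathbb{F}_q$-coefficients in degrees $1, \ldots, k$, a condition that can in turn be deduced from the vanishing of the $\mathbb{F}_q$-completed étale homotopy groups $\pi^{\mathrm{\acute{e}t}}_i(X)^{\wedge}_{\mathbb{F}_q}$ for $i \leqslant k$ via an étale Hurewicz argument. Once this translation is in place, the proof is a direct transcription of the proof of Theorem \ref{poincaré}; no analogue of the non-orientable case arises, since the trace isomorphism guarantees $H^n = \mathbb{F}_q$ for every connected smooth proper $X$ over a separably closed field with $q$ invertible.
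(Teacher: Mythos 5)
Your proposal is correct and follows essentially the same route as the paper: the paper's proof likewise just observes that every smooth proper variety over a separably closed field is orientable (so no analogue of the non-orientable case is needed) and invokes étale Poincaré duality to verify hypothesis (4) of Theorem \ref{5 hyp}. Your additional care about the bounded model, the existence of a contraction over $\mathbb{F}_q$, the finiteness of the cohomology groups, and the precise meaning of $k$-connectedness fills in details the paper leaves implicit, but does not change the argument.
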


\begin{proof}
	Any smooth variety defined over a separately closed field is orientable.  The result follows from Theorem \ref{5 hyp}. The hypotheses $(4)$ is satisfied by Poincaré duality \cite[Proposition 2.5]{EW16}.
\end{proof}

\bibliographystyle{alpha}
\bibliography{bib}

\end{document}